\documentclass[11pt,reqno,palentino]{amsart} 

\usepackage{mypack}

\usepackage{latexsym}
\usepackage{a4wide}
\usepackage{amscd}
\usepackage{graphics}
\usepackage{amsmath}
\usepackage{amssymb}
\usepackage{mathrsfs}
\usepackage{accents}
\usepackage{enumerate}

\usepackage[backgroundcolor=white,linecolor=gray]{todonotes}

\DeclareMathOperator{\coker}{coker} 
\DeclareMathOperator{\codim}{codim}

\newtheorem{theorem*}{Theorem}

\author{Alberto Abbondandolo and Thomas Rot}
\begin{document}

\subjclass[2010]{58B05, 58B15, 57R90, 47A53, 47H11}
\keywords{Fredholm maps, Pontryagin-Thom construction, framed cobordism.}

\title{The homotopy classification of proper Fredholm maps of index one}
\begin{abstract}
In a previous paper, we classified the homotopy classes of proper Fredholm maps from an infinite dimensional Hilbert manifold to its model space in terms of a suitable version of framed cobordism. We explicitly computed these homotopy classes for non-positive index. In this paper, we compute the homotopy classes of proper Fredholm maps of index one from a simply connected Hilbert manifold to its model space. This classification uses a new numerical invariant for proper Fredholm maps of index one.
\end{abstract}

\maketitle 
\vspace{-.3cm}
\noindent\makebox[\textwidth][c]{
\begin{minipage}[t]{.4\textwidth}
\it \Small ${\ }^{1}$ Ruhr-Universit\"at Bochum\\
\phantom{${\ }^{1}$} Fakult\"at f\"ur Mathematik\\
\phantom{${\ }^{1}$} Universit\"atsstrasse 150\\
\phantom{${\ }^{1}$} Geb\"aude IB 3/65\\
\phantom{${\ }^{1}$} D-44801 Bochum, Germany\\
\phantom{${\ }^{1}$} Email: alberto.abbondandolo@rub.de
\end{minipage}%
\hfill
\begin{minipage}[t]{.4\textwidth}
\it 
\Small ${\ }^{2}$ Vrije Universiteit Amsterdam\\
\phantom{${\ }^{2}$} Departement Wiskunde\\
\phantom{${\ }^{2}$} De Boelelaan 1081a\\
\phantom{${\ }^{2}$} 1081 HV Amsterdam, the Netherlands \\
\phantom{${\ }^{2}$} Email: t.o.rot@vu.nl
\end{minipage}}
\vspace{.3cm}

\centerline{\em In memory of Andrzej Granas}


\section*{Introduction}

Consider a real separable and infinite dimensional Hilbert space $\mH$. The space of Fredholm operators on $\mH$ of index $n$ is denoted by $\Phi_n(\mH)$. 

By Hilbert manifold we mean here a connected paracompact smooth manifold modeled on $\mH$. A smooth map $f: M \rightarrow N$ between Hilbert manifolds is called Fredholm of index $n$ if its differential at every point is a Fredholm operator of index $n$. All the Fredholm maps we consider in this paper are tacitly assumed to be smooth. By Kuiper's theorem \cite{kui65}, the general linear group of $\mH$ is contractible and hence the tangent bundle of any Hilbert manifold is trivial.  By fixing trivializations of the tangent bundles of $M$ and $N$, the differential of a Fredholm map $f:M\rightarrow N$ of index $n$ can be seen as a map
\[
df: M \rightarrow \Phi_n(\mH).
\] 
A Fredholm homotopy between Fredholm maps $f,g: M \rightarrow N$ of index $n$ is a smooth homotopy between $f$ and $g$ that is Fredholm, necessarily of index $n+1$. In this case, we say that $f$ and $g$ are Fredholm homotopic. If the Fredholm maps $f,g: M \rightarrow N$ of index $n$ are Fredholm homotopic, then $f$ and $g$ are homotopic as continuous maps and their differentials
\[
df, \, dg : M \rightarrow \Phi_n(\mH)
\] 
are also homotopic. The converse is also true: Any two Fredholm maps of index $n$ are Fredholm homotopic if and only if they are homotopic as continuous maps and their differentials are homotopic as maps from $M$ to $\Phi_n(\mH)$, see \cite[Proposition 2.24]{et70} and \cite[Theorem 1]{ar20}. Note that $\Phi_n(\mH)$, unlike the space of linear mappings between finite dimensional vector spaces, has a non-trivial topology, and its homotopy groups are given by Bott's periodicity theorem, see Section \ref{topsec} below. 

The question of the homotopy classification of Fredholm maps becomes more interesting, and definitely non-trivial, if one adds the requirement that maps and homotopies should be proper, meaning that the inverse image of any compact set is compact. This is the question we are addressing here, in the special case in which the target space $N$ is the model Hilbert space $\mH$. We denote by
\[
\mathcal{F}_n^{\mathrm{prop}} [M,\mH]
\]
the space of equivalence classes of proper Fredholm maps $f: M \rightarrow \mH$ of index $n$ modulo proper Fredholm homotopies. Building on classical results of Elworthy and Tromba from \cite{et70}, in \cite[Theorem 2]{ar20} we constructed a bijection between $\mathcal{F}_n^{\mathrm{prop}} [M,\mH]$ and a suitable version of framed cobordism. See also \cite{sch64b, geb69, ber77} for related results on the homotopy classification of proper Fredholm maps between Banach spaces whose differential takes values into some contractible subspace of the space of Fredholm operators, such as the space of compact perturbations of the identity.  We refer to Section \ref{prelsec} below for the relevant definitions and for the statements of the results from \cite{ar20} that are needed here. In the case of negative index $n<0$, the bijection mentioned above shows that $\mathcal{F}_n^{\mathrm{prop}} [M,\mH]$ is in one-to-one correspondence with the space of homotopy classes $[M,\Phi_n(\mH)]$, as in the non-proper case, see  \cite[Theorem 3]{ar20}.  For $n=0$, $\mathcal{F}_0^{\mathrm{prop}} [M,\mH]$ is completely determined in terms of $[M,\Phi_n(\mH)]$ and of a suitable degree, see \cite[Theorem 4]{ar20}.

The aim of this paper is to deal with the case $n=1$ and determine $\mathcal{F}_1^{\mathrm{prop}} [M,\mH]$ under the assumption that $M$ is simply connected. In the remaining part of this introduction, we assume the Hilbert manifold $M$ to be simply connected. Recall that $\pi_2(\Phi_1(\mH))=\mZ_2$. We shall say that a  map $A: M \rightarrow \Phi_1(\mH)$ is spin if the homomorphism
\[
\pi_2(A) : \pi_2(M) \rightarrow \pi_2(\Phi_1(\mH)) = \mZ_2
\]
is trivial. We refer to Remark \ref{remspin} below for comments about our choice of using the term ``spin'' in this context. This notion is homotopy invariant, and hence the set of homotopy classes of maps from $M$ to $\Phi_1(\mH)$ decomposes as
\[
[M,\Phi_1(\mH)] = [M,\Phi_1(\mH)]_{\mathrm{sp}} \sqcup [M,\Phi_1(\mH)]_{\mathrm{ns}},
\]
where the first set denotes the set of spin homotopy classes and the second one the set of non-spin ones. The Fredholm map $f: M \rightarrow \mH$ of index one is said to be spin if $df$ is spin. 

The space of spin proper Fredholm maps $f: M \rightarrow \mH$ of index one possesses a $\mZ_2$-valued invariant $\tau$. The definition of $\tau$ builds on the fact that
\[
\pi_2(\Phi_1(\mH),\Phi_1^0(\mH)) = \mZ_2,
\]
where $\Phi_1^0(\mH)$ denotes the subset of $\Phi_1(\mH)$ consisting of surjective operators. If $y\in \mH$ is a regular value of $f$, then $f^{-1}(y)$ is a compact one-dimensional submanifold of $M$, i.e.\ a finite set of embedded circles. If $S$ is one of these circles, by the fact that $M$ is simply connected we can consider a disk $D\subset M$ with boundary $S$. Since $y$ is a regular value, $df$ maps $S$ into $\Phi_1^0(\mH)$ and hence the restriction of $df$ to the disk $D$ defines an element
\[
\sigma(S,df) := [df|_{(D,S)}] \in \pi_2(\Phi_1(\mH),\Phi_1^0(\mH)) = \mZ_2.
\]
Equivalently, $\sigma(S,df)$ can be defined as the unoriented intersection number of the map $df|_D$ with the set $\Phi_1^{\mathrm{sing}}(\mH):= \Phi_1(\mH) \setminus \Phi_1^0(\mH)$ of non-surjective Fredholm operators of index one, which forms a variety of codimension two in $\Phi_1(\mH)$. As the notation suggests, $\sigma(S,df)$ does not depend on the choice of the capping disk $D$, due to the fact that $f$ is assumed to be spin. We define $\tau(f)\in \mZ_2$ to be the number modulo two of connected components $S$ of $f^{-1}(y)$ such that $\sigma(S,df)=0$. The number $\tau(f)$ is independent of the choice of the regular value $y$ and turns out to be invariant under proper Fredholm homotopies. The main result of this paper is that the invariant $\tau$, together with the space $[M,\Phi_1(\mH)]$, completely classifies the set of homotopy classes of proper Fredholm maps of index one.

\begin{theorem*}
\label{main}
Let $M$ be a simply connected Hilbert manifold. Then the proper Fredholm maps $f,g: M \rightarrow \mH$ of index one are homotopic through a proper Fredholm homotopy if and only if the following conditions are satisfied.
\begin{enumerate}[(i)]
\item The maps $df, dg: M \rightarrow \Phi_1(\mH)$ are homotopic.
\item If $f$ and $g$ are both spin, then $\tau(f)=\tau(g)$. If $f$ and $g$ are not spin, then no further condition is necessary. 
\end{enumerate}
Furthermore, the map
\[
f \mapsto \left\{ \begin{array}{ll}  ([df], \tau(f)) & \mbox{if  $f$ is spin}, \\ \; [df] & \mbox{if $f$  is not spin}, \end{array} \right.
\]
induces a bijection
\[
\mathcal{F}_1^{\mathrm{prop}}[M,\mH] \cong ( [M,\Phi_1(\mH)]_{\mathrm{sp}} \times \mZ_2) \sqcup [M,\Phi(\mH)]_{\mathrm{ns}}.
\]
\end{theorem*}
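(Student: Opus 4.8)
The plan is to combine the Pontryagin--Thom correspondence of \cite[Theorem~2]{ar20} with a surgery analysis of framed cobordism classes of circles in $M$; the flexibility will come from $M$ being infinite dimensional (compact submanifolds can be isotoped freely, and circles bound embedded disks because $M$ is simply connected) and from all the obstruction groups in play being equal to $\mZ_2$. By \cite[Theorem~2]{ar20}, $\mathcal{F}_1^{\mathrm{prop}}[M,\mH]$ is in bijection with the set of cobordism classes recalled in Section~\ref{prelsec}, a class being represented by a pair $(P,A)$ with $P=f^{-1}(y)$ a compact framed $1$-submanifold --- a finite disjoint union of embedded circles --- and $A=df\colon M\to\Phi_1(\mH)$ inducing the framing along $P$ (so that $A|_P$ is $\Phi_1^0(\mH)$-valued). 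The assignment $(P,A)\mapsto[A]$ is manifestly a cobordism invariant and splits $\mathcal{F}_1^{\mathrm{prop}}[M,\mH]$ over $[M,\Phi_1(\mH)]=[M,\Phi_1(\mH)]_{\mathrm{sp}}\sqcup[M,\Phi_1(\mH)]_{\mathrm{ns}}$; it is onto, since a small standard framed circle placed in a region where $A$ is surjective-valued realises any prescribed $[A]$. It then remains to compute, for a fixed $A$, the set $\mathcal{F}_A$ of cobordism classes lying over $[A]$.

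The key moves I will use are those realised by compactly supported --- hence automatically proper --- Fredholm homotopies of index two: (a) ambient isotopy of $P$; (b) birth and death of a small circle $S_0$ carrying the framing of the local Morse model $u\mapsto|u|^2-t$, for which $\sigma(S_0,df)=1$; and (c) fusion of two circles along an embedded band, which necessarily sweeps through one non-surjective point of $df$ (the saddle of the homotopy), so that $\phi(S):=1-\sigma(S,df)\in\mZ_2$ is \emph{additive} under the resulting connected sum, $\phi(S\# S')=\phi(S)+\phi(S')$. When $A$ is spin --- equivalently, when $\sigma$ is independent of the capping disk --- the total $\tau(f)=\sum_i\phi(S_i)$ is a well-defined function on $\mathcal{F}_A$, and inspecting the cobordism $W=F^{-1}(y)\subset M\times[0,1]$ attached to a proper Fredholm homotopy $F$ (again using the spin condition to control the $\pi_2$-ambiguity of capping disks) shows that it is invariant and independent of the regular value $y$. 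This already gives the well-definedness of the map in the statement and the necessity of (i) and (ii): (i) is the invariance of $[df]$, and (ii) is the invariance of $\tau$ in the spin case, nothing being required in the non-spin case.

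For sufficiency, assume $[df]=[dg]=:[A]$, pick a common regular value $y$ of $f$ and $g$, and set $P=f^{-1}(y)$, $Q=g^{-1}(y)$. Using (a)--(c) I fuse all circles of $P$ into a single circle of framing $\sum\phi=\tau(f)$: if $\tau(f)=0$ this is a $\phi=0$ circle and is removed by (b), while if $\tau(f)=1$ it is the standard essential circle; similarly for $Q$. Hence, when $A$ is spin, $f$ and $g$ are proper Fredholm homotopic to these normal forms, $\mathcal{F}_A$ has at most the two elements ``$\varnothing$'' and ``essential circle'', and these are distinct because $\tau$ separates them; both values of $\tau$ occur over $[A]$, realised by the configuration with one $\phi=0$ circle (resp. one $\phi=1$ circle). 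This gives the ``if'' direction and the bijection onto $[M,\Phi_1(\mH)]_{\mathrm{sp}}\times\mZ_2$. When $A$ is not spin, $\pi_2(A)\colon\pi_2(M)\to\pi_2(\Phi_1(\mH))=\mZ_2$ is onto, and this supplies one further move: letting a circle travel once around an immersed $2$-sphere $\Sigma\subset M$ with $\pi_2(A)[\Sigma]\neq0$ and connect-summing changes its $\phi$ by one, through a proper Fredholm homotopy. Thus the essential circle is converted into a $\phi=0$ circle and then killed, $\mathcal{F}_A$ collapses to a single point, and $f\simeq g$ unconditionally, giving the bijection onto $[M,\Phi_1(\mH)]_{\mathrm{ns}}$.

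The main difficulty I expect is twofold. First, the framing bookkeeping of the fusion move (c) must be pinned down exactly --- in particular that the saddle point contributes the $+1$ which makes $\phi$, rather than $\sigma$ itself, additive, and that this is compatible with the fixed global datum $A$. Second, and more seriously, in the non-spin case one must actually construct the proper Fredholm homotopy that drags a circle around a $2$-sphere detected by $\pi_2(A)$ and flips its framing class; this is the point where simple connectivity alone is insufficient, and one must promote the disk-dependence of the relative class $\sigma(S,df)\in\pi_2(\Phi_1(\mH),\Phi_1^0(\mH))=\mZ_2$ from a statement about differentials to one about the maps themselves.
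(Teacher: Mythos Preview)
Your overall architecture---reduce via Pontryagin--Thom to framed cobordism of circles, then normalize using local moves---is correct and matches the paper's, but the implementation differs substantially. The paper does not use your fusion move (c). Instead it builds three explicit model Fredholm maps $f,g,h:\ell^2\to\ell^2$ (Section~\ref{modelsec}): the quadratic $f$ realises the death of a $\sigma=1$ circle (your move (b), Proposition~\ref{kill1}); the Hopf-type $g$ furnishes a standard $\sigma=0$ circle; and $h=p\circ g$ comes with an explicit framed \emph{annulus} cobordism from a \emph{pair} of $\sigma=0$ circles to $\varnothing$ (Proposition~\ref{kill2}). So in the spin case the paper kills $\sigma=1$ circles singly and $\sigma=0$ circles in pairs, never fusing. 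Your approach buys a uniform additive picture via $\phi=1-\sigma$, but the price is that the pair-of-pants must be upgraded to a genuine framing $B:[0,1]\times M\to\Phi_2(\mR\oplus\mH,\mH)$, and the orientation-coherence bookkeeping this entails (cf.\ Lemmas~\ref{orlemma} and~\ref{model2}) is precisely what the paper's model-based route packages cleanly. You also leave implicit the step matching two residual $\sigma=0$ circles with homotopic framings; this is not automatic and is the content of Proposition~\ref{match2}, whose proof reduces both framings to the same model on a disk via Lemma~\ref{model1} and then corrects a possible $\pi_1(\Phi_1(\mH))$-twist in the connecting homotopy.

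Your non-spin case is overcomplicated and rests on a misconception. In that regime $\sigma(S,A)$ is \emph{not well-defined}---it depends on the capping disk---so there is nothing to ``change by dragging the circle around a sphere''; you simply \emph{choose} the disk. The paper's argument is one line: since $\pi_2(A)\neq 0$, for any component $S$ one can connect-sum an initial capping disk with a sphere $\psi$ satisfying $\pi_2(A)[\psi]=1$ to obtain a disk $\varphi$ with $\eta(A\circ\varphi,\Phi_1^{\mathrm{sing}}(\mH))=1$, and then Proposition~\ref{kill1} (which needs neither the spin hypothesis nor simple connectivity) kills $S$ directly. No extra move beyond your (b) is required, and the ``more serious'' difficulty you anticipate evaporates.
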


The above result should be compared to the following standard consequence of classical Pontryagin framed cobordism: If $M$ is a simply connected $(n+1)$-dimensional closed manifold with $n\geq 3$ then $[M,S^n]$ has at most two elements, and if furthermore $\pi_2(M)=0$ then $[M,S^n]\cong \mZ_2$ (see e.g.\ \cite[p.\ 185]{kos93}). The case of a general $(n+1)$-dimensional closed manifold $M$ with $n\geq 3$ is more complicated and was solved by Steenrod by introducing the algebra that is named after him, see \cite{ste47}. Recently, Konstantis~\cite{konstantis} revisited this problem and gave a geometric description of the results of Steenrod in the case in which $M$ is spin.

Let us give a closer look at the particular case $M=\mH$. Since $\mH$ is contractible, $[\mH,\Phi_1(\mH)]$ has only one class, consisting of spin maps. Then Theorem \ref{main} tells us that $\mathcal{F}_1^{\mathrm{prop}}[\mH,\mH]$ has two elements, which are distinguished by the invariant $\tau$. Let us exhibit one proper Fredholm map of index one in each of these two homotopy classes, by identifying $\mH$ with $\ell^2$, the space of square summable real sequences. The first one is the map
\[
f: \ell^2 \rightarrow \ell^2, \qquad (u_1,u_2,u_3,\dots) \mapsto ( u_1^2+u_2^2,u_3,u_4,\dots ).
\]
The map $f$ is proper, Fredholm of index one and satisfies $\tau(f)=0$ because the inverse image of the vector $-e_1 = (-1,0,0,\dots)$ is empty. In order to exhibit a map $g$ with $\tau(g)=1$, we consider the smooth proper map
\[
g_0: \mC^2 \rightarrow \mC\times \mR \cong \mR^3, \qquad (z_1,z_2) \mapsto (2 z_1 \bar{z}_2, |z_1|^2-|z_2|^2).
\]
This map sends $S^3$ to $S^2$ and restricts to the Hopf fibration on $S^3$. It lifts to a map
\[
g:   \ell^2 \rightarrow \ell^2, \qquad (u_1,u_2,u_3,\dots) \mapsto \bigl( g_0(u_1+iu_2,u_3+i u_4), u_5, u_6, \dots \bigr),
\]
which is proper and Fredholm of index one. The vector $e_3 = (0,0,1,0,\dots)$ is a regular value for $g$ and its inverse image is the circle
\[
g^{-1}(e_3) = \{ (u_1,u_2,0,0,\dots)\in \ell^2 \mid u_1^2+u_2^2=1\}.
\]
This circle can be capped by a disk that is contained in the three-sphere
\[
\{(u_1,u_2,u_3,u_4,0,0,\dots) \in \ell^2 \mid u_1^2+u_2^2+ u_3^2 + u_4^2=1\},
\]
and hence consists of regular points for $g$. This implies that $\sigma(g^{-1}(e_3),dg)=0$ and hence $\tau(g)=1$. Theorem \ref{main} implies that any proper Fredholm map of index one from $\mH$ to $\mH$ is proper Fredholm homotopic to either $f$ or $g$. In the latter case, it must be surjective.

\medskip

The paper is organized as follows. In Section \ref{topsec}, we discuss the topology of the spaces $\Phi_1(\mH)$ and $\Phi_1^0(\mH)$. In Section \ref{prelsec}, we recall the notion of framed cobordism that we introduced in \cite{ar20}, together with the main theorem of that paper and some other useful results. In Section \ref{tausec}, we rigorously define the function $\tau$ and show that it is invariant under framed cobordism. In Section \ref{modelsec}, we study three important examples, two of them being the maps $f$ and $g$ that we have introduced above. These examples will be used as normal forms in order to construct explicit framed cobordisms. Section \ref{redsec} is devoted to the reduction of some one-dimensional framed submanifolds of $\ell^2$ to the model cases introduced in Section \ref{modelsec}. In Section \ref{blockssec}, we show how to eliminate certain circles from a one-dimensional framed submanifold of $M$ and how to build framed cobordisms between simple one-dimensional framed submanifolds. Theorem \ref{main} is proved in Section \ref{proofsec}. In Appendix \ref{appsec}, we show that the space $\Phi_n^{\mathrm{sing}}(\mH)$ of non-surjective Fredholm operators of index $n\geq 0$ is stratified by finite-codimensional submanifolds and we prove a related transversality result.

\medskip

\paragraph{\sc Acknowledgments.} The research of A.\ Abbondandolo is supported by the DFG-Project 380257369 ``Morse theoretical methods in Hamiltonian dynamics''. The research of T.\ O.\ Rot is supported by NWO-NWA Startimpuls - 400.17.608.

\section{A few facts about the topology of $\Phi_1(\mH)$ and some relevant subspaces}
\label{topsec}

Let $\mH$ be a separable infinite dimensional real Hilbert space. We denote by $\Phi(\mH)$ the space of linear Fredholm operators on $\mH$ and by $\Phi_n(\mH)$ the connected component consisting of operators of index $n$, for $n\in \mZ$. Occasionally, we will need to consider the space of Fredholm operators of index $n$ from a Hilbert space $\mH_1$ to a Hilbert space $\mH_2$, and we will denote this space by $\Phi_n(\mH_1,\mH_2)$. 

The Bott periodicity theorem from \cite{bot59} can be interpreted as a computation of the homotopy groups of $\Phi_n(\mH)$, for any integer $n$, see \cite{ati89}. For $i>0$ they are given by
\[
\pi_i(\Phi_n(\mH)) = \left\{ \begin{array}{ll} \mZ & \mbox{if } i \equiv 0, \, 4 \mod 8, \\ \mZ_2 & \mbox{if } i \equiv 1, \, 2 \mod 8, \\ 0 & \mbox{if } i \equiv 3, \, 5, \, 6, \, 7 \mod 8. \end{array} \right.
\]
Each $\Phi_n(\mH)$ is the base space of a real line bundle that is known as the determinant bundle
\[
\det \rightarrow \Phi_n(\mH),
\]
whose fibers are the one-dimensional spaces
\[
\det(A) := \Lambda^{\max}(\ker A) \otimes \Lambda^{\max}(\coker A)^*, \qquad \forall A\in \Phi_n(\mH),
\]
where $\Lambda^{\max}(V)$ denotes the top degree component of the exterior algebra of the finite dimensional real vector space $V$, see \cite{qui85} and \cite{ama09}. This line bundle is non-trivial and a closed curve $A: S^1 \rightarrow \Phi_n(\mH)$ is a generator of $\pi_1(\Phi_n(\mH))=\mZ_2$ if and only if the pull-back $A^*\det$ is the non-trivial line bundle over $S^1$.

We now specialize the attention to the space $\Phi_1(\mH)$. The subset
\[
\Phi_1^0(\mH) := \{ A \in \Phi_1(\mH) \mid A \mbox{ is surjective} \}
\]
is open in $\Phi_1(\mH)$. The map
\[
p: \Phi_1^0(\mH) \rightarrow \mathrm{Gr}_1(\mH), \qquad A \mapsto \ker A,
\]
onto the Grassmannian of one-dimensional subspaces of $\mathbb{H}$ is a fiber bundle and its fibers
\[
\begin{split}
p^{-1}(L) &= \{ A \in \Phi_1^0(\mH) \mid \ker A = L \} \\ &= \{   A \in \Phi_1(\mH) \mid A|_L=0 \mbox{ and }A|_{L^{\perp}} :  L^{\perp} \rightarrow \mH \mbox{ is an isomorphism} \} \cong \mathrm{GL}(\mH)
\end{split}
\]
are contractible, thanks to Kuiper's theorem, see \cite{kui65}. Therefore, $p$ is a homotopy equivalence and $\Phi^1_0(\mH)$ has the homotopy type of $\mathrm{Gr}_1(\mH)$, that is, of $\mathrm{BO}(1)$ or $\mathbb{RP}^{\infty}$, and its only non-vanishing homotopy group is the first one:
\[
\pi_1(\Phi_1^0(\mH)) = \mZ_2.
\]
The long exact sequence in homotopy associated to the pair $(\Phi_1(\mH),\Phi_1^0(\mH))$ is
\begin{equation}
\label{exact}
\scalebox{.93}{\xymatrix@R=.3cm@C=.3cm{&0\ar@{=}[d]&\mZ_2\ar@{=}[d]& & &\mZ_2\ar@{=}[d]& & \mZ_2\ar@{=}[d]\\
\ldots\ar[r]&\pi_2(\Phi_1^0(\mH))\ar[r]&\pi_2(\Phi_1(\mH))\ar[rr]^-{\pi_2(j)}& & \pi_2(\Phi_1(\mH),\Phi_1^0(\mH))\ar[r]^-{\partial_1}&\pi_1(\Phi_1^0(\mH))\ar[rr]^{\pi_1(i)}& & \pi_1(\Phi_1(\mH))\ar[r]&\ldots}}
\end{equation}
where $j$ and $i$ denote the inclusions. The map $\pi_1(i)$ is an isomorphism as the inclusion
\[
i: \Phi_1^0(\mH) \hookrightarrow \Phi_1(\mH)
\]
pulls back the determinant line bundle over $\Phi_1(\mH)$ to the tautological bundle over $\Phi^0_1(\mH)\simeq \mathrm{BO}(1)$, which is non-trivial. It follows that the homomorphism $\partial_1$ is trivial,
\begin{equation}
\label{pi_2rel}
\pi_2( \Phi_1(\mH), \Phi_1^0(\mH)) = \mZ_2,
\end{equation}
and $\pi_2(j)$ is an isomorphism. The above exact sequence continues as follows
\[
\xymatrix@R=.3cm@C=.3cm{&\mZ_2\ar@{=}[d]& & \mZ_2\ar@{=}[d]&  & 0 \ar@{=}[d]  \\
\ldots\ar[r]&\pi_1(\Phi_1^0(\mH))\ar[rr]^{\pi_1(i)}  & &\pi_1(\Phi_1(\mH))\ar[r] & \pi_1(\Phi_1(\mH),\Phi_1^0(\mH))\ar[r]^-{\partial_0}&\pi_0(\Phi_1^0(\mH))\ar[r]& \ldots}
\]
so the fact that $\pi_1(i)$ is an isomorphism implies that
\begin{equation}
\label{pi_1rel}
\pi_1(\Phi_1(\mH),\Phi_1^0(\mH)) = 0.
\end{equation}
 
Denote by $\mD$ the unit disk in $\mC\cong \mR^2$ and by $\partial \mD\cong S^1$ its boundary. Equation (\ref{pi_2rel}) tells us that there are precisely two homotopy classes of maps from $(\mD,\partial \mD)$ to $(\Phi_1(\mH), \Phi_1^0(\mH))$. Let us explain how to distinguish them by the intersection number with the set
 \[
 \Phi^{\mathrm{sing}}_1(\mH) := \Phi_1(\mH) \setminus \Phi_1^0(\mH)
 \]
 of non-surjective Fredholm operators of index one. The proofs of the facts that we state below are standard, but for the sake of completeness are given in Appendix \ref{appsec}. The set $\Phi^{\mathrm{sing}}_1(\mH)$ is closed in $\Phi_1(\mH)$ and has the stratification
 \[
 \Phi^{\mathrm{sing}}_1(\mH) = \bigsqcup_{j\geq 1} \Phi_1^j(\mH),
 \]
 where
 \[
 \Phi_1^j(\mH) := \{ A \in \Phi_1(\mH) \mid \dim \coker A = j \}.
 \]
 The set $\Phi_1^j(\mH)$ is a submanifold of $\Phi_1(\mH)$ of codimension $j(j+1)$ and for each $h\geq 0$ the union 
 \[
 \bigcup_{j\geq h} \Phi_1^j(\mH)
 \]
 is closed in $\Phi_1(\mH)$. 
 
 Now let $\Sigma$ be a compact two-dimensional manifold with boundary and
 \[
 F: (\Sigma,\partial \Sigma) \rightarrow (\Phi_1(\mH), \Phi_1^0(\mH))
 \]
 a continuous map. Since 
 \[
 \codim \Phi_1^j(\mH) \geq 6 > \dim \Sigma \qquad \forall j\geq 2,
 \]
 we can perturb $F$ within its homotopy class and assume that $F$ is smooth, does not meet $\Phi_1^j(\mH)$ for any $j\geq 2$ and is transverse to the submanifold $\Phi_1^1(\mH)$, which has codimension two. In this case, the set
 \[
 F^{-1} ( \Phi_1^{\mathrm{sing}}(\mH)) = F^{-1} (  \Phi_1^1(\mH))
 \]
 consists of finitely many interior points of $\Sigma$ and the intersection number 
 \[
\eta(F,  \Phi_1^{\mathrm{sing}}(\mH)) \in \mZ_2
\]
is defined to be the number of these points modulo two. Note that an integer-valued intersection number cannot be defined, even if $\Sigma$ is assumed to be oriented, because the normal bundle of $\Phi_1^1(\mH)$ in $\Phi_1(\mH)$ is not orientable.

Standard arguments show that $\eta(F,  \Phi_1^{\mathrm{sing}}(\mH))$ does not depend on the smooth and transverse perturbation of $F$ and that it is a homotopy invariant, meaning that it descends to a map on
\[
\bigl[ (\Sigma,\partial \Sigma), (\Phi_1(\mH), \Phi_1^0(\mH)) \bigr].
\]
The next result says that the intersection number with $\Phi_1^{\mathrm{sing}}(\mH)$ distinguishes the elements in $\pi_2(\Phi_1(\mH),\Phi_1^0(\mH))$ and $\pi_2(\Phi_1(\mH))$.

\begin{lemma}
\label{intnumb}
In the particular case of a map 
\[
F: (\mD, \partial \mD) \rightarrow (\Phi_1(\mH), \Phi_1^0(\mH)), \qquad \mbox{resp.} \quad F: S^2 \rightarrow \Phi_1(\mH),
\]
we have that
\begin{equation}
\label{one}
\eta(F,  \Phi_1^{\mathrm{sing}}(\mH)) \in \mZ_2
\end{equation}
coincides with the element
\begin{equation}
\label{two}
 [F] \in \pi_2( \Phi_1(\mH), \Phi_1^0(\mH)) = \mZ_2, \qquad \mbox{resp.} \quad [F] \in \pi_2( \Phi_1(\mH))=\mZ_2.
\end{equation}
\end{lemma}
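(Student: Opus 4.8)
The plan is to exploit that, by the remarks preceding the lemma, $\eta(\,\cdot\,,\Phi_1^{\mathrm{sing}}(\mH))$ descends to a \emph{function} on the two-element sets $[(\mD,\partial\mD),(\Phi_1(\mH),\Phi_1^0(\mH))]=\pi_2(\Phi_1(\mH),\Phi_1^0(\mH))=\mZ_2$ (resp. $[S^2,\Phi_1(\mH)]=\pi_2(\Phi_1(\mH))=\mZ_2$), while $[F]$ is tautologically such a function, namely the identity. It therefore suffices to check that $\eta$ agrees with the identity of $\mZ_2$ on both elements. On the trivial class this is immediate: it has a representative with image contained in $\Phi_1^0(\mH)$ --- a constant map at a surjective operator in the relative case, the constant map in the absolute case --- and such a representative is disjoint from $\Phi_1^{\mathrm{sing}}(\mH)$, so its intersection number vanishes. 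All the content is thus in producing a representative of the non-trivial class whose intersection number equals $1$.

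I would build such a representative by hand. Fix Hilbert space isomorphisms $\mH\cong\mR^2\oplus V\cong\mR\oplus V$ with $V$ separable and infinite dimensional, and for $z=\alpha+i\beta\in\mD\subset\mC$ set $M_z\in\mathrm{Hom}(\mR^2,\mR)$, $M_z(x,y):=\alpha x+\beta y$, and regard, via the fixed identifications, $F(z):=M_z\oplus\mathrm{id}_V$ as an element of $\Phi_1(\mH)$ (its index is $(2-1)+0=1$). Then $F(z)$ is surjective precisely when $z\neq0$, so $F$ is a map of pairs $(\mD,\partial\mD)\to(\Phi_1(\mH),\Phi_1^0(\mH))$ with $F^{-1}(\Phi_1^{\mathrm{sing}}(\mH))=\{0\}$. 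To see this intersection is transverse, recall from Appendix~\ref{appsec} that near an operator $A_0\in\Phi_1^1(\mH)$ the codimension-two stratum $\Phi_1^1(\mH)$ is cut out transversally, inside a normal slice identified with $\mathrm{Hom}(\ker A_0,\coker A_0)$, by the block $\ker A_0\to\coker A_0$ of the reduced form of a nearby operator. Here $A_0=F(0)=0\oplus\mathrm{id}_V$, so $\ker A_0=\mR^2$ and $\coker A_0=\mR$, and since each $F(z)$ is already block-diagonal with invertible $V$-block $\mathrm{id}_V$, this block is simply $M_z\in\mathrm{Hom}(\mR^2,\mR)$. Hence the composition of $F$ with the normal projection is $z\mapsto M_z$, whose differential at $0$ is the tautological isomorphism $\mC\xrightarrow{\ \sim\ }\mathrm{Hom}(\mR^2,\mR)$; thus $F\pitchfork\Phi_1^1(\mH)$ at $0$ and $\eta(F,\Phi_1^{\mathrm{sing}}(\mH))=1$. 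By homotopy invariance of $\eta$, $F$ cannot represent the trivial class, so it represents the non-trivial one, and therefore $\eta$ coincides with the identity of $\mZ_2$ on $\pi_2(\Phi_1(\mH),\Phi_1^0(\mH))$ --- the relative assertion.

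For the absolute statement I would cap off $F$ to a sphere. Because $\partial_1=0$, as shown in the paragraph containing~(\ref{pi_2rel}), the loop $F|_{\partial\mD}\colon S^1\to\Phi_1^0(\mH)$ is null-homotopic in $\Phi_1^0(\mH)$, hence extends to a map $G\colon\mD'\to\Phi_1^0(\mH)$ on a second disk; gluing along the common boundary yields $\tilde F\colon S^2=\mD\cup_{S^1}\mD'\to\Phi_1(\mH)$ with $\tilde F^{-1}(\Phi_1^{\mathrm{sing}}(\mH))=\{0\}$ and the same transverse intersection, so $\eta(\tilde F,\Phi_1^{\mathrm{sing}}(\mH))=1$. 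Homotopy invariance then forces $[\tilde F]\neq0$ in $\pi_2(\Phi_1(\mH))=\mZ_2$ (note $[S^2,\Phi_1(\mH)]=\pi_2(\Phi_1(\mH))$ since the $\pi_1$-action is automatically trivial on $\mZ_2$), and the argument closes as before. Alternatively, one can deduce the absolute case directly from the relative one: given a transverse $F\colon S^2\to\Phi_1(\mH)$, push $F^{-1}(\Phi_1^{\mathrm{sing}}(\mH))$ into the interior of a disk $\mD\subset S^2$ whose complementary disk is mapped into $\Phi_1^0(\mH)$; then $\eta(F,\Phi_1^{\mathrm{sing}}(\mH))=\eta(F|_{\mD},\Phi_1^{\mathrm{sing}}(\mH))$ while $[F|_{\mD}]$ is the image of $[F]$ under the isomorphism $\pi_2(j)$ of~(\ref{exact}).

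The only genuine obstacle is the transversality verification for the model map $F$, i.e.\ having an explicit enough local description of the codimension-two stratum $\Phi_1^1(\mH)\subset\Phi_1(\mH)$ to recognize $dF_0$ as an isomorphism onto a normal slice; this is exactly what Appendix~\ref{appsec} supplies. Everything else --- that $\eta$ descends to the homotopy classes, that the relative and absolute second homotopy groups equal $\mZ_2$, that the trivial class carries intersection number $0$ --- is already in place, and the concluding step is the triviality that a self-map of $\mZ_2$ fixing $0$ and attaining $1$ is the identity.
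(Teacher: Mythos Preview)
Your argument is correct and follows essentially the same route as the paper: the paper also checks the trivial class via a constant map, exhibits the identical model map (their Example~\ref{exsigma1} is your $F(z)=M_z\oplus\mathrm{id}_V$ written in coordinates on $\ell^2$) with a transversality check against Proposition~\ref{singular}, and caps off via $\partial_1=0$ to obtain the absolute case. Your alternative reduction of the absolute case to the relative one through the isomorphism $\pi_2(j)$ is a minor addition not in the paper.
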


\begin{proof}
We first deal with the case of maps from $(\mD, \partial \mD)$ to $(\Phi_1(\mH), \Phi_1^0(\mH))$.
Since both elements are homotopy invariants, it is enough to check the equality for one map in each of the two homotopy classes in
\[
\bigl[ (\mD,\partial \mD), (\Phi_1(\mH), \Phi_1^0(\mH))\bigr] = \pi_2  (\Phi_1(\mH), \Phi_1^0(\mH)) = \mZ_2.
\]
By choosing $F$ to be a constant map into $\Phi_1^0(\mH)$, we obtain the equality of (\ref{one}) and (\ref{two}) for the trivial homotopy class. It suffices then to exhibit a map
\[
F: (\mD,\partial \mD) \rightarrow  (\Phi_1(\mH), \Phi_1^0(\mH))
\]
such that
\[
\eta(F, \Phi_1^{\mathrm{sing}}(\mH)) = 1,
\]
because by the homotopy invariance of the intersection number, $[F]$ must be the non-trivial element in $ \pi_2  (\Phi_1(\mH), \Phi_1^0(\mH))$. The existence of such a map $F$ is established in the example below, which will be useful also later on. 

The case of a map from $S^2$ to $\Phi_1(\mH)$ follows from the previous case: Also here, it is enough to show the existence of a map $F': S^2 \rightarrow \Phi_1(\mH)$ with $\eta(F',\Phi_1(\mH))=1$. Let $F: (\mD,\partial \mD) \rightarrow   (\Phi_1(\mH), \Phi_1^0(\mH))$ be such that $\eta(F, \Phi_1^{\mathrm{sing}}(\mH)) = 1$. Since the boundary homomorphism $\partial_1$ in (\ref{exact}) vanishes, the loop $F|_{\partial \mD}$ can be capped by a disk in $\Phi_1^0(\mH)$, and gluing this disk to $F$ we obtain a map $F': S^2 \rightarrow \Phi^1(\mH)$ with $\eta(F', \Phi_1^{\mathrm{sing}}(\mH)) = 1$.
\end{proof}

\begin{example}
\label{exsigma1}
Identify $\mH$ with the Hilbert space $\ell^2$ of square summable real sequences. Consider the smooth map
\[
F: \mD \rightarrow \Phi_1(\ell^2)
\]
that is defined as follows: For every $(x,y)\in \mD$ we define $F(x,y)$ to be the Fredholm operator of index one
\[
F(x,y): (v_1,v_2,v_3, \dots) \mapsto (xv_1 + yv_2, v_3, v_4, \dots ).
\]
The kernel of $F(x,y)$ is one-dimensional for every $(x,y)\neq (0,0)$, whereas
\begin{equation}
\label{atzero}
F(0,0) : (v_1,v_2,v_3,\dots) \mapsto (0,v_3,v_4, \dots)
\end{equation}
has a two-dimensional kernel. Therefore, $F$ meets $\Phi_1^{\mathrm{sing}}(\ell^2)$ only at $(0,0)$, and $F(0,0)$ belongs to $\Phi_1^1(\ell^2)$. Let us check that $F$ meets  $\Phi_1^1(\ell^2)$ transversally at $(0,0)$. By (\ref{atzero}) and the formula given in Proposition \ref{singular} in Appendix \ref{appsec}, the tangent space of $\Phi_1^1(\ell^2)$ at $F(0,0)$ is
\begin{equation}
\label{tangent}
T_{F(0,0)} \Phi_1^1(\ell^2) = \{ A \in \mathrm{L}(\ell^2) \mid \langle A e_1,e_1 \rangle = \langle A e_2,e_1 \rangle = 0\},
\end{equation}
where $\mathrm{L}(\ell^2)$ denotes the space of bounded linear operators on $\ell^2$ and $\langle\cdot,\cdot \rangle$ the scalar product of $\ell^2$. The image of the differential of $F$ at $(0,0)$ is the two-dimensional plane spanned by the operators
\[
\begin{split}
\partial_x F(0,0) &: (v_1,v_2,v_3, \dots) \mapsto (v_1,v_3,v_4, \dots) , \\  \partial_y F(0,0) &: (v_1,v_2,v_3, \dots) \mapsto (v_2,v_3,v_4, \dots),
\end{split}
\]
and one readily checks that this plane has trivial intersection with the subspace (\ref{tangent}). Therefore, $F$ meets $\Phi_1^1(\ell^2)$ transversally at $(0,0)$ and
\[
\eta(F, \Phi_1^{\mathrm{sing}}(\mH)) = [F] = 1.
\]
\end{example}

\begin{remark}
\label{hom-rel-bdry}
Consider two maps
\[
F_0,F_1: (\mD, \partial \mD) \rightarrow (\Phi_1(\mH), \Phi_1^0(\mH)),
\]
such that $F_0|_{\partial \mD} = F_1|_{\partial \mD}$. The maps $F_0$ and $F_1$ are homotopic through a homotopy $H$ such that $H(t,\cdot) = F_0 = F_1$ on $\partial \mD$ for every $t\in [0,1]$ if and only if the intersection numbers of $F_0$ and $F_1$ with $\Phi_1^{\mathrm{sing}}(\mH)$ coincide. Indeed, $F_0$ and $F_1$ can be glued along the boundary and produce the map $F:S^2\rightarrow \Phi_1(\mH)$ given by
\[
    F(x,y,z):=\begin{cases} F_0(x,y) \quad &\mbox{if }z\geq 0,\\
      F_1(x,y) &\mbox{if }z\leq 0,
      \end{cases}
 \]
which satisfies
\[
\eta(F,\Phi_1^{\mathrm{sing}}(\mH)) = \eta(F_0,\Phi_1^{\mathrm{sing}}(\mH)) + \eta(F_1,\Phi_1^{\mathrm{sing}}(\mH)).
\]
By Lemma \ref{intnumb}, the map $F$ is nullhomotopic if and only if the intersection numbers of $F_0$ and $F_1$ with $\Phi_1^{\mathrm{sing}}(\mH)$ coincide. In this case, $F$ extends over the ball and the map 
\[
H:[0,1]\times\mD\rightarrow \Phi_1(\mH), \qquad (t,x,y) \mapsto F\bigl(x,y,(1-2t)\sqrt{1-x^2-y^2}\bigr),
\]
is a homotopy from $F_0$ to $F_1$ such that $H(t,\cdot)=F_0=F_1$  on $\partial \mD$ for every $t\in [0,1]$.
\end{remark}

We conclude this section by the following result about maps on the annulus.

\begin{lemma}
\label{annulus}
Let $\Sigma:= [0,1]\times S^1$ and 
\[
F_0,F_1 : (\Sigma,\partial \Sigma) \rightarrow (\Phi_1(\mH), \Phi_1^0(\mH))
\]
be maps such that the loops $F_0(0,\cdot)$, $F_0(1,\cdot)$, $F_1(0,\cdot)$ and $F_1(1,\cdot)$ are null-homotopic in $\Phi_1^0(\mH)$. Then $F_0$ and $F_1$ are homotopic if and only if
\begin{equation}
\label{equal}
\eta(F_0, \Phi_1^{\mathrm{sing}}(\mH)) = \eta(F_1, \Phi_1^{\mathrm{sing}}(\mH)).
\end{equation}
\end{lemma}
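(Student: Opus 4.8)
The ``only if'' direction is immediate, since $\eta(\,\cdot\,,\Phi_1^{\mathrm{sing}}(\mH))$ is a homotopy invariant of maps of pairs, as recalled just before Lemma~\ref{intnumb}. So assume from now on that $\eta(F_0,\Phi_1^{\mathrm{sing}}(\mH))=\eta(F_1,\Phi_1^{\mathrm{sing}}(\mH))$. The plan is to first normalize the boundary behaviour of $F_0$ and $F_1$ so that both are constant on $\partial\Sigma$, then read off the homotopy classification (rel $\partial\Sigma$) of such maps from the quotient $\Sigma/\partial\Sigma\simeq S^2\vee S^1$, and finally absorb the one surviving invariant by dragging a boundary circle around a loop inside $\Phi_1^0(\mH)$.

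For the normalization step, note that since $\Phi_1^0(\mH)$ is open and $\{0\}\times S^1$, $\{1\}\times S^1$ are compact, there is $\delta>0$ such that $F_i$ maps the collars $[0,\delta]\times S^1$ and $[1-\delta,1]\times S^1$ into $\Phi_1^0(\mH)$ for $i=0,1$. By hypothesis the loops $F_i(0,\cdot)$ and $F_i(1,\cdot)$ are null-homotopic in $\Phi_1^0(\mH)$ --- this is the only point where the hypothesis is used --- so, inserting these null-homotopies inside the two collars, one modifies $F_i$ there so that the new map equals a fixed surjective operator $A\in\Phi_1^0(\mH)$ on all of $\partial\Sigma$, with the same $A$ for $i=0$ and $i=1$ (using that $\Phi_1^0(\mH)$ is path-connected). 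This modification can be realized through maps of pairs: the original and the new collar maps into $\Phi_1^0(\mH)$ agree on $\{\delta\}\times S^1$, and since $\{\delta\}\times S^1$ is a strong deformation retract of $[0,\delta]\times S^1$, any two maps of the collar into $\Phi_1^0(\mH)$ agreeing there are homotopic rel $\{\delta\}\times S^1$ within $\Phi_1^0(\mH)$. After this step we may assume $F_0\equiv F_1\equiv A$ on $\partial\Sigma$.

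Each $F_i$ now factors through the pointed quotient $\Sigma/\partial\Sigma$, which is homotopy equivalent to $S^2\vee S^1$: the $S^2$ summand is carried by the $2$-cell of $\Sigma$ and the $S^1$ summand by the image of an arc $[0,1]\times\{*\}$ joining the two boundary circles. Consequently, homotopy classes rel $\partial\Sigma$ of maps $(\Sigma,\partial\Sigma)\to(\Phi_1(\mH),\{A\})$ are identified with $[S^2\vee S^1,\Phi_1(\mH)]=\pi_2(\Phi_1(\mH))\times\pi_1(\Phi_1(\mH))=\mZ_2\times\mZ_2$, the first factor being the class of the $S^2$-part. After a transverse perturbation, the one-dimensional $S^1$-part misses the codimension-two set $\Phi_1^{\mathrm{sing}}(\mH)$, so by Lemma~\ref{intnumb} (applied in its $S^2$-version, and using that collapsing $\partial\Sigma$, which maps to $A\in\Phi_1^0(\mH)$, does not affect the intersection count) the first coordinate of $F_i$ equals $\eta(F_i,\Phi_1^{\mathrm{sing}}(\mH))$, while the second coordinate is the class $\ell_i\in\pi_1(\Phi_1(\mH))$ of the loop $t\mapsto F_i(t,*)$.

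It remains to dispose of the invariant $\ell_i$, which is not invariant under homotopies of pairs. Dragging the circle $\{0\}\times S^1$ once around a loop $\gamma$ in $\Phi_1^0(\mH)$ based at $A$ (keeping $\{1\}\times S^1$ fixed at $A$ and reinterpolating inside the collar $[0,\delta]\times S^1$) defines a homotopy of pairs, hence one preserving $\eta$, after which $\ell_i$ is replaced by $\ell_i+[\gamma]$, where $[\gamma]$ denotes the image of the class of $\gamma$ under $\pi_1(\Phi_1^0(\mH))\to\pi_1(\Phi_1(\mH))$. Since this homomorphism is an isomorphism (the map $\pi_1(i)$ of the excerpt), we may choose $\gamma$ with $[\gamma]=\ell_0+\ell_1$; applying this to $F_0$ yields a map, still constant equal to $A$ on $\partial\Sigma$, with the same pair of invariants $(\eta(F_1),\ell_1)$ rel $\partial\Sigma$ as $F_1$, hence homotopic to $F_1$ rel $\partial\Sigma$. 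Concatenating the three homotopies of pairs shows that $F_0$ and $F_1$ are homotopic, which proves \eqref{equal} suffices. I expect the main obstacle to be keeping the pair structure under control throughout: recognizing that $\ell_i$ genuinely drops out once one is allowed to move the boundary inside $\Phi_1^0(\mH)$, and carrying out the boundary normalization in the second paragraph by homotopies that are honestly of pairs rather than merely free homotopies.
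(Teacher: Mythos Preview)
Your argument is correct, and it uses the same ingredients as the paper's proof---the boundary null-homotopy hypothesis, the intersection-number interpretation of $\pi_2$ from Lemma~\ref{intnumb}, and the isomorphism $\pi_1(i)\colon\pi_1(\Phi_1^0(\mH))\to\pi_1(\Phi_1(\mH))$---but it organizes them differently.

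The paper normalizes each $F_i$ to be constant on each boundary circle (allowing two possibly different constants $A_0,A_1$), pushes the arc $[0,1]\times\{z_0\}$ into $\Phi_1^0(\mH)$ via $\pi_1(\Phi_1(\mH),\Phi_1^0(\mH))=0$, and then observes that $F_i$ restricted to $\Gamma:=\partial\Sigma\cup([0,1]\times\{z_0\})$ is null-homotopic in $\Phi_1^0(\mH)$ simply because $\Gamma\simeq S^1\vee S^1$ and $F_i$ is constant on each wedge summand. An explicit contraction then makes $F_i$ constant on all of $\Gamma$, so it factors through $\Sigma/\Gamma\cong S^2$ and Lemma~\ref{intnumb} finishes immediately. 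By contrast, you collapse only $\partial\Sigma$ (to a single point), pick up the extra factor $\pi_1(\Phi_1(\mH))$ coming from $\Sigma/\partial\Sigma\simeq S^2\vee S^1$, and then kill this $\ell$-invariant by your dragging move, which is precisely where the surjectivity of $\pi_1(i)$ enters. The paper's route is a little slicker because, by not forcing the two boundary constants to coincide, the arc never becomes a loop and the $\ell$-obstruction never materializes; your route makes the role of $\pi_1(i)$ more visible and shows explicitly why the extra $\mZ_2$ is spurious for homotopies of pairs. The step where you assert $\eta(F_i)=a_i$ is correct but terse; the cleanest way to nail it down is to note that after pushing the arc into $\Phi_1^0(\mH)$ (a rel-$\partial\Sigma$ homotopy, hence preserving both $\eta$ and $(a_i,\ell_i)$), all transverse intersections with $\Phi_1^{\mathrm{sing}}(\mH)$ lie in $\Sigma\setminus\Gamma$ and survive the collapse to $\Sigma/\Gamma\cong S^2$, where Lemma~\ref{intnumb} identifies their count with $a_i$.
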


\begin{proof}
The necessity of condition (\ref{equal}) is clear because the intersection number is a homotopy invariant. Now assume that (\ref{equal}) holds. Consider the map $F_0$. Thanks to the condition on the boundary loops, up to a first homotopy we can assume that
\[
F_0(0,z) = A_0 \qquad \mbox{and} \qquad F_0(1,z) = A_1 \qquad  \forall z\in S^1,
\]
for some given $A_0,A_1\in \Phi_1^0(\mH)$. Fix some $z_0\in S^1$. Thanks to (\ref{pi_1rel}), up to a second homotopy we can assume that $F_0(s,z_0)$ belongs to $\Phi_1^0(\mH)$ for every $s\in [0,1]$. Set 
\[
\Gamma:=  \bigl( \{0,1\} \times S^1\bigr) \cup \bigl( [0,1]\times \{z_0\} \bigr).
\]
It is easy to construct a homotopy
\[
H: [0,1] \times \Gamma \rightarrow \Phi_1^0(\mH) 
\]
such that $H(0,\cdot) = F_0(\cdot)$ and $H(1,\cdot)$ is constantly equal to $A_0$: Just define 
\[
H(t,s,z_0) := F_0((1-t)s,z_0), \quad H(t,0,z) := A_0, \quad H(t,1,z):= F_0(1-t,z_0), 
\]
for every $(t,s)\in [0,1]^2$ and $z\in S^1$. Setting $H(0,\cdot)=H_0$ on the whole $\Sigma$ and extending $H$ to $[0,1]\times \Sigma$ we obtain a homotopy
\[
H: ([0,1] \times \Sigma, [0,1]\times \partial \Sigma) \rightarrow (\Phi_1(\mH), \Phi_1^0(\mH))
\]
that connects $F_0$ to a map that is constantly equal to $A_0$ on $\Gamma$. Up to replacing $F_0$ by $H(1,\cdot)$, we may assume that $F_0$ has the latter property. Similarly, we may assume that also $F_1$ is constantly equal to $A_0$ on $\Gamma$. The quotient space $\Sigma/\Gamma$ is a two-sphere $S^2$. Denote by
\[
p: \Sigma \rightarrow S^2 
\]
the quotient map and by $\gamma\in S^2$ the image of $\Gamma$ by $p$. Being constant on $\Gamma$, both $F_0$ and $F_1$ factorize through $p$:
\[
F_0 = \tilde{F}_0 \circ p \qquad \mbox{and} \qquad F_1 = \tilde{F}_1 \circ p
\]
for some maps $\tilde{F}_0, \tilde{F}_1 : S^2 \rightarrow \Phi_1(\mH)$ mapping $\gamma$ into $A_0$. By construction
\[
\eta(\tilde{F}_0,\Phi_1^{\mathrm{sing}}(\mH)) = \eta(F_0,\Phi_1^{\mathrm{sing}}(\mH)) \quad \mbox{and} \quad \eta(\tilde{F}_0,\Phi_1^{\mathrm{sing}}(\mH)) = \eta(F_0,\Phi_1^{\mathrm{sing}}(\mH)).
\]
Therefore, assumption (\ref{equal}) and Lemma \ref{intnumb} imply that $\tilde{F}_0$ and $\tilde{F}_1$ are homotopic through a homotopy that is constantly equal to $A_0$ on $[0,1]\times \{\gamma\}$. By right composition with $p$ we obtain the desired homotopy between $F_0$ and $F_1$.
\end{proof}

\begin{remark}
Note that, unlike the case of maps from the disk discussed in Remark \ref{hom-rel-bdry}, the intersection number with $\Phi_1^{\mathrm{sing}}(\mH)$ is not the only obstruction for two maps
\[
F_0, F_1: (\Sigma,\partial \Sigma) \rightarrow (\Phi_1(\mH),\Phi_1^0(\mH))
\]
satisfying the assumptions of Lemma \ref{annulus} and $F_0|_{\partial \Sigma}= F_1|_{\partial \Sigma}$ to be homotopic through a homotopy fixing the boundary. Indeed, the fact that $\pi_1(\Phi_1^0(\mH)) \cong \pi_1(\Phi_1(\mH))$ is non-trivial produces a second obstruction: Consider a path
\[
F: [0,1] \rightarrow \Phi_1^0(\mH)
\]
satisfying $F(0)=F(1)$ that is not contractible with fixed ends in $\Phi_1^0(\mH)$ and define the maps $F_0$ and $F_1$ to be
\[
F_0(x,y) := F(0) = F(1) , \qquad F_1(x,y) = F(x) \qquad \forall (x,y) \in \Sigma = [0,1]\times S^1.
\]
These maps coincide on $\partial \Sigma$, satisfy the assumptions of Lemma \ref{annulus} and have both intersection number zero with  $\Phi_1^{\mathrm{sing}}(\mH)$. However they are not homotopic through a homotopy $H$ such that $H(t,\cdot) = F_0=F_1$ on $\partial \Sigma$ for every $t\in [0,1]$.
\end{remark} 

\section{Framed cobordism on Hilbert manifolds}
\label{prelsec}

In this section, we recall some of the main definitions and results from \cite{ar20} and we add a few notions and statements that will be useful in the following sections.
By a Hilbert manifold we mean a connected paracompact smooth manifold modeled on the Hilbert space $\mH$. A smooth map $f: M \rightarrow N$ between the Hilbert manifolds $M$ and $N$ is said to be Fredholm of index $n$ if its differential at every point is a Fredholm operator of index $n$. Since the general linear group of $\mH$ is contractible, the tangent bundles of $M$ and $N$ are trivial. By fixing trivializations of these bundles, we can see the differential of the Fredholm map $f: M \rightarrow N$ of index $n$ as a map
\[
df: M \rightarrow \Phi_n(\mH).
\]
The map $f: M \rightarrow N$ is said to be proper if $f^{-1}(K)$ is compact for any $K\subset N$ compact. The symbol $\mathcal{F}_n^{\mathrm{prop}}[M,N]$ denotes the space of equivalence classes of proper Fredholm maps from $M$ to $N$ modulo proper Fredholm homotopies. Note that a Fredholm homotopy $h: [0,1] \times M \rightarrow N$ between two Fredholm maps of index $n$ has index $n+1$. Note also that the properness assumption on $h$ is stronger than asking each map $h(t,\cdot): M \rightarrow N$ to be a proper map (a good example is the map $h: [0,1]\times \mR \rightarrow \mR$ given by $h(t,x) = tx^2+x$, which is not proper but restricts to a proper map on $\{t\} \times \mR$ for every $t\in [0,1]$).

From now on, we fix a trivialization of the tangent bundle of the Hilbert manifold $M$ and choose the target manifold $N$ to be the model Hilbert space $\mH$. All Fredholm maps and Fredholm homotopies are assumed to be smooth. In the following definition, we see the empty set as a submanifold of $M$ of arbitrary dimension $n\in \mZ$. If $n<0$, this is the only submanifold of dimension $n$.

\begin{definition}
Let $n\in \mZ$ and $X \subset M$ be a compact submanifold of dimension $n$. A framing of $X$ is a continuous map
\[
A: M \rightarrow \Phi_n(\mH)
\]
such that $\ker A(x)=T_x X$ for every $x\in X$. The pair $(X,A)$ is called framed submanifold of dimension $n$ of $M$.
\end{definition}

Framed submanifolds are always assumed to be compact.
Note that, unlike in the finite dimensional Pontryagin framed cobordism theory, here framings are assumed to be defined on the whole $M$ and not just on the submanifold $X$. The reason for this is that the space of Fredholm operators $\Phi_n(\mH)$ has a non-trivial topology, so maps from $X$ to $\Phi_n(\mH)$ do not necessarily extend to $M$.  

 If $f: M \rightarrow \mH$ is a proper Fredholm map of index $n$ and $y\in \mH$ is a regular value of $f$, then the pair
\[
(f^{-1}(y),df)
\]
is an $n$-dimensional framed submanifold of $M$. Note that the regular values of $f$ form a dense subset of $\mH$ by the Sard-Smale theorem, see \cite{sma65}. The above pair is called Pontryagin framed manifold of $f$.

We denote by 
\[
\begin{split}
S_L&: \mR \oplus \mH \rightarrow \mH, \qquad (s,v) \mapsto v, \\
S_R &: \mH \rightarrow \mR \oplus \mH, \qquad v \mapsto (0,v),
\end{split}
\]
the left and right shifts, which are Fredholm operators of index $1$ and $-1$, respectively. Their action by composition defines maps
\[
\begin{split}
S_L^* &: \Phi_n(\mH) \rightarrow \Phi_{n+1}(\mR \oplus \mH,\mH), \qquad S_L^* A := A S_L, \\
S_R^*&:  \Phi_{n+1}(\mR \oplus \mH,\mH) \rightarrow  \Phi_n(\mH), \qquad S_R^* A := A S_R,
\end{split}
\]
such that $S_R^* S_L^*= (S_LS_R)^*$ is the identity on $ \Phi_n(\mH)$.

\begin{definition} 
A cobordism between the compact $n$-dimensional submanifolds $X_0$ and $X_1$ of $M$ is a compact submanifold with boundary $W\subset [0,1]\times M$ of dimension $n+1$ such that
\[
\partial W \subset \{0,1\} \times M, \qquad W \cap ([0,\epsilon) \times M) = [0,\epsilon) \times X_0, \qquad W \cap ((1-\epsilon,1]\times M) = (1-\epsilon,1] \times X_0,
\]
for some $\epsilon \in (0,\frac{1}{2})$. A framing of the cobordism $W$ is a continuous map
\[
B: [0,1] \times M \rightarrow \Phi_{n+1}(\mR\oplus \mH, \mH)
\]
such that
\[
\ker B(t,x) = T_{(t,x)} W \qquad \forall (t,x)\in W,
\]
and for every $t_0\in [0,\epsilon)$, $t_1\in (1-\epsilon,1]$ and $x\in M$ we have
\[
B(t_0,x) = S_L^* A_0(x) \qquad B(t_1,x) = S_L^* A_1(x),
\]
where $A_0$ and $A_1$ are framings of $X_0$ and $X_1$. In this case, the pair $(W,B)$ is called a framed cobordism from $(X_0,A_0)$ to $(X_1,A_1)$.
\end{definition}

\begin{remark}
In \cite{ar20} we actually required framings of $X\subset M$ or $W\subset [0,1]\times M$ to be smooth on $X$ and $W$, respectively. This requirement can be dropped, because any continuous framing can be smoothened.
\end{remark}

\begin{remark}
\label{trivial1}
Note that if two $n$-dimensional framed submanifolds $(X_0,A_0)$ and $(X_1,A_1)$ are framed cobordant, then the maps $A_0,A_1: M \rightarrow \Phi_n(\mH)$ are homotopic: If $(W,B)$ is a framed cobordism from $(X_0,A_0)$ to $(X_1,A_1)$ then $S_R^*B$ is a homotopy from $A_0$ to $A_1$.
\end{remark}

\begin{remark}
\label{trivial2}
 If $A_0$ and $A_1$ are framings of the same compact submanifold $X\subset M$, then a homotopy
\[
H: [0,1] \times M \rightarrow \Phi_n(\mH)
\]
such that
\begin{equation}
\label{kernel}
\ker H(t,x) = T_x X \qquad \forall x\in X
\end{equation}
defines a framing of the trivial cobordism $[0,1]\times X$: We define $B(t,x) := S_L^* H(\chi(t),x)$, where $\chi: [0,1] \rightarrow [0,1]$ is a continuous function such that $\chi=0$ in a neighborhood of $0$ and $\chi=1$ in a neighborhood of $1$. Therefore, if $A_0$ and $A_1$ are homotopic through a homotopy $H$ that satisfies (\ref{kernel}), then $(X,A_0)$ and $(X,A_1)$ are framed cobordant.
\end{remark}

Framed cobordism induces an equivalence relation on the set of $n$-dimensional framed submanifolds of $M$. The set of equivalence classes of this relation is denoted by
\[
\Omega_n^{\mathrm{fr}}(M).
\]
If $y_1$ and $y_2$ are regular values of $f: M \rightarrow \mH$, the corresponding Pontryagin framed manifolds $(f^{-1}(y_1),df)$ and $(f^{-1}(y_2),df)$ are framed cobordant. More generally, if $f_1:M \rightarrow \mH$ and $f_2:M \rightarrow \mH$ are proper Fredholm homotopic with regular values $y_1$ and $y_2$, then the corresponding Pontryagin framed manifolds $(f^{-1}_1(y_1),df)$ and $(f^{-1}_2(y_2),df)$ are framed cobordant. Therefore, the Pontryagin construction induces a map
\[
\mathcal{F}_n^{\mathrm{prop}}[M,\mH] \rightarrow \Omega_n^{\mathrm{fr}} (M).
\]
One of the main results of \cite{ar20} is that this map is bijective.

\begin{theorem}{\em (\cite[Theorem 7.1]{ar20})}
\label{fromold}
The map 
\[
\mathcal{F}_n^{\mathrm{prop}}[M,\mH] \rightarrow \Omega_n^{\mathrm{fr}} (M)
\]
is bijective.
\end{theorem}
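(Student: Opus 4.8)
The plan is to construct an explicit map $\Psi\colon\Omega_n^{\mathrm{fr}}(M)\to\mathcal{F}_n^{\mathrm{prop}}[M,\mH]$ and to show that it is a two-sided inverse of the Pontryagin map of the statement. This is the infinite-dimensional incarnation of the classical Pontryagin--Thom construction; the genuinely new point, absent in the finite-dimensional theory, is the interaction between the \emph{properness} requirement and the \emph{Fredholm} condition on the maps and homotopies involved, and this is where I expect the real work to be. Throughout I may use Sard--Smale \cite{sma65}, Kuiper's theorem \cite{kui65}, the transversality and density results of Elworthy--Tromba \cite{et70}, and Remarks \ref{trivial1} and \ref{trivial2}.

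\emph{Step 1: the Pontryagin map is well defined.} If $y_0,y_1\in\mH$ are regular values of the proper Fredholm map $f\colon M\to\mH$ of index $n$, I connect them by a smooth path $\gamma\colon[0,1]\to\mH$ which is transverse to $f$ and constant near its endpoints (a generic choice, by Sard--Smale). Then $W:=\{(t,x)\in[0,1]\times M : f(x)=\gamma(t)\}$ is a submanifold of dimension $n+1$; it is compact because $f$ is proper and $\gamma([0,1])$ is compact, and the constancy of $\gamma$ near $0$ and $1$ yields the collar condition with $\partial W=f^{-1}(y_0)\sqcup f^{-1}(y_1)$. The assignment $(t,x)\mapsto\bigl((s,v)\mapsto df(x)v-s\dot\gamma(t)\bigr)$ defines a map $B\colon[0,1]\times M\to\Phi_{n+1}(\mR\oplus\mH,\mH)$ whose kernel on $W$ is exactly $T_{(t,x)}W$ and which equals $S_L^*df(x)$ near $t=0$ and $t=1$; hence $(W,B)$ is a framed cobordism from $(f^{-1}(y_0),df)$ to $(f^{-1}(y_1),df)$. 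Applying the same device to a regular value of a proper Fredholm homotopy $h\colon[0,1]\times M\to\mH$ (arranged to be $t$-independent near $0$ and $1$) shows that the framed cobordism class $[f^{-1}(y),df]$ depends only on the proper Fredholm homotopy class of $f$.

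\emph{Step 2: the construction of $\Psi$.} For $n<0$ both sides are singletons, so assume $n\geq0$. Given a framed submanifold $(X,A)$, the operators $A(x)$ with $x\in X$ are surjective with kernel $T_xX$. Fixing a compatible metric, the normal bundle $\nu X$ is a Hilbert bundle over the compact base $X$, hence trivial by Kuiper's theorem, so the exponential map provides a tubular neighbourhood $U$ of $X$ on which the ``fibrewise linear'' map $(x,v)\mapsto A(x)v$ is Fredholm of index $n$, vanishes exactly and transversally along $X$, and has differential $A$ along $X$. The crux is to extend this germ to a proper Fredholm map $f\colon M\to\mH$ of index $n$ that has no further zeros and whose differential $df$ is homotopic to $A$ through framings of $X$. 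Here the fact that $A$ is defined on the whole of $M$ is used decisively: away from a smaller tubular neighbourhood one lets the differential of $f$ be governed by $A$, while superposing an exhaustion-type term that forces $|f(x)|\to\infty$ as $x$ leaves compact subsets of $M$; keeping this extension Fredholm of index $n$ and zero-free is most conveniently done after realizing $M$ as an open subset of $\mH$. Once this is achieved, Remark \ref{trivial2} gives that $(f^{-1}(0),df)$ is framed cobordant to $(X,A)$, i.e.\ the Pontryagin map of $\Psi(X,A)=f$ returns $[X,A]$.

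\emph{Step 3: $\Psi$ descends to framed cobordism and the round trips are the identity.} Carrying out the construction of Step 2 one dimension higher, applied to a framed cobordism $(W,B)$ from $(X_0,A_0)$ to $(X_1,A_1)$ and relative to the prescribed behaviour at the two ends, produces a proper Fredholm homotopy between $\Psi(X_0,A_0)$ and $\Psi(X_1,A_1)$; thus $\Psi$ induces a well-defined map on $\Omega_n^{\mathrm{fr}}(M)$. A similar relative argument shows that $\Psi$ applied to the Pontryagin manifold of a proper Fredholm map $f$ is proper Fredholm homotopic to $f$ itself, since both maps vanish along the same regular zero set with framings that are homotopic through framings of that set, and one can interpolate between them near the zero set and use the global framing away from it. Combining this with Step 1, the Pontryagin map and $\Psi$ are mutually inverse, hence bijective. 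I expect the main obstacle to be exactly the properness bookkeeping in Steps 2 and 3: every extension and interpolation used to build or compare the maps must be performed so as to keep the total map or homotopy proper --- a strictly stronger demand than fibrewise properness, as the example $h(t,x)=tx^2+x$ in Section \ref{prelsec} illustrates --- while simultaneously controlling the Fredholm index and the homotopy class of the differential.
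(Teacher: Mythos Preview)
The present paper does not prove this theorem at all; it is quoted verbatim from \cite[Theorem~7.1]{ar20} and used as a black box. There is therefore nothing in this paper to compare your argument against. Your outline is the infinite-dimensional Pontryagin--Thom strategy, which is indeed the approach of \cite{ar20}, building on \cite{et70}; Step~1 is essentially complete and correct.

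Two comments on the proposal itself. First, your dismissal of the case $n<0$ is wrong: a framed submanifold of negative dimension is a pair $(\emptyset,A)$ with $A\colon M\to\Phi_n(\mH)$, and two such are framed cobordant precisely when the framings are homotopic, so $\Omega_n^{\mathrm{fr}}(M)\cong[M,\Phi_n(\mH)]$, which is not a singleton in general (cf.\ \cite[Theorem~3]{ar20}). One still has to realise every homotopy class as the differential of some proper Fredholm map, which is already nontrivial.

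Second, Step~2 is the entire content of the theorem, and your sketch does not supply it. The phrase ``superposing an exhaustion-type term that forces $|f(x)|\to\infty$ as $x$ leaves compact subsets'' is exactly where finite-dimensional intuition breaks down: in $\mH$ closed bounded sets are not compact, so no scalar exhaustion will manufacture properness, and any perturbation large enough to influence properness risks destroying the Fredholm property or the index. Realising $M$ as an open subset of $\mH$ does not by itself help, since the inclusion of an open set is not proper. The actual construction in \cite{ar20} is considerably more delicate and occupies the bulk of that paper. Your candid acknowledgement that this is the obstacle is fair, but as it stands the proposal is a plan for a proof rather than a proof.
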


Therefore, the classification of proper Fredholm maps from $M$ to $\mH$ modulo proper Fredholm homotopy is reduced to the classification of framed submanifolds of $M$ modulo framed cobordism. The classification of the latter objects was carried out in \cite{ar20} for all indices $n\leq 0$, see \cite[Theorems 3 and 4]{ar20}.

The following easy result will be useful in order to construct framed cobordisms.

\begin{lemma}
\label{functors}
Let $(X,A)$ be a framed submanifold of the Hilbert manifold $M$ and $\varphi: M \rightarrow M$ a diffeomorphism that is smoothly isotopic to the identity. Then $(X,A)$ is framed cobordant to $(\varphi(X),A\circ \varphi^{-1} d\varphi^{-1})$.
\end{lemma}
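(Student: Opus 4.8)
The plan is to build an explicit framed cobordism $(W,B)$ by transporting the trivial cobordism on $[0,1]\times X$ along a smooth path of diffeomorphisms of $M$ running from the identity to $\varphi$.

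As a preliminary step I would check that $A\circ\varphi^{-1}\,d\varphi^{-1}$ — where, via the fixed trivialization of $TM$, the assignment $y\mapsto d\varphi^{-1}(y):=d(\varphi^{-1})(y)$ is viewed as a map $M\to\mathrm{GL}(\mH)$ — really is a framing of $\varphi(X)$: if $y\in\varphi(X)$ and $x:=\varphi^{-1}(y)\in X$, then $\ker\bigl(A(x)\circ d\varphi^{-1}(y)\bigr)=d\varphi(x)\bigl(\ker A(x)\bigr)=d\varphi(x)\bigl(T_xX\bigr)=T_y\varphi(X)$. Next, starting from a smooth isotopy from the identity to $\varphi$ and precomposing with a smooth function $\chi\colon[0,1]\to[0,1]$ which is $0$ near $0$ and $1$ near $1$, I would produce a smooth family $(\Psi_t)_{t\in[0,1]}$ of diffeomorphisms of $M$ with $\Psi_t=\mathrm{id}$ for $t$ near $0$ and $\Psi_t=\varphi$ for $t$ near $1$.

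For the underlying manifold I would take the diffeomorphism $G\colon[0,1]\times M\to[0,1]\times M$, $G(t,x):=(t,\Psi_t(x))$, and set $W:=G\bigl([0,1]\times X\bigr)=\bigcup_{t\in[0,1]}\{t\}\times\Psi_t(X)$. As the image of the compact submanifold with boundary $[0,1]\times X$ under a diffeomorphism, $W$ is a compact $(n+1)$-dimensional submanifold with boundary; it equals $[0,\epsilon)\times X$ near $t=0$ and $(1-\epsilon,1]\times\varphi(X)$ near $t=1$ because $\Psi_t$ is locally constant there, and $\partial W=(\{0\}\times X)\sqcup(\{1\}\times\varphi(X))$. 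For the framing I would push the trivial framing $S_L^*A$ of $[0,1]\times X$ forward through $G$. Writing $\Theta(t,x)\in\mathrm{GL}(\mR\oplus\mH)$ for the trivialized differential $dG(t,x)$, which acts by $(\delta t,v)\mapsto\bigl(\delta t,\ \partial_t\Psi_t(x)\,\delta t+d\Psi_t(x)v\bigr)$ and is an isomorphism, I set
\[
B(t,y):=A\bigl(\Psi_t^{-1}(y)\bigr)\circ S_L\circ\Theta\bigl(t,\Psi_t^{-1}(y)\bigr)^{-1}\in\Phi_{n+1}(\mR\oplus\mH,\mH).
\]
This map is continuous on $[0,1]\times M$ and has index $n+1$ since $\Theta^{-1}$ is invertible. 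For $(t,y)\in W$, writing $x:=\Psi_t^{-1}(y)\in X$, one gets $\ker B(t,y)=\Theta(t,x)\bigl(\mR\oplus\ker A(x)\bigr)=dG(t,x)\bigl(T_{(t,x)}([0,1]\times X)\bigr)=T_{(t,y)}W$. Near $t=0$ one has $\Theta(t,x)=\mathrm{id}_{\mR\oplus\mH}$, hence $B(t,y)=S_L^*A(y)$; near $t=1$ one has $\Theta(t,x)=\mathrm{id}_{\mR}\oplus d\varphi(x)$, and a short computation using $\bigl(d\varphi(\varphi^{-1}(y))\bigr)^{-1}=d\varphi^{-1}(y)$ gives $B(t,y)=S_L^*\bigl(A\circ\varphi^{-1}\,d\varphi^{-1}\bigr)(y)$. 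Therefore $(W,B)$ is a framed cobordism from $(X,A)$ to $(\varphi(X),A\circ\varphi^{-1}\,d\varphi^{-1})$, which is the assertion.

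I expect the only delicate point to be the bookkeeping with the trivialization of $TM$: one must check that inserting the operator $\Theta^{-1}$ — the differential of the path of diffeomorphisms, regarded as an element of $\mathrm{GL}(\mR\oplus\mH)$ — keeps the index equal to $n+1$, produces over $W$ exactly the kernel $T_{(t,y)}W$, and yields precisely $S_L^*A_0$ and $S_L^*A_1$ rather than merely homotopic maps near the two boundary components (this last requirement is what forces the reparametrization by $\chi$). One could also argue more indirectly via Theorem \ref{fromold}, representing $(X,A)$ as a Pontryagin framed manifold of a proper Fredholm map $f$ and noting that $(t,y)\mapsto f(\Psi_t^{-1}(y))$ is a proper Fredholm homotopy from $f$ to $f\circ\varphi^{-1}$, but the direct construction above is cleaner and self-contained.
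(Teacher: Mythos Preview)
Your proof is correct and follows exactly the same approach as the paper: you push the trivial framed cobordism $([0,1]\times X,\,S_L^*A)$ forward through the diffeomorphism $G(t,x)=(t,\Psi_t(x))$ of $[0,1]\times M$, which is precisely the paper's construction $W:=\Psi(W_0)$, $B:=B_0\circ\Psi^{-1}\,d\Psi^{-1}$. You simply spell out in detail what the paper summarizes as ``readily seen''---the verification that $A\circ\varphi^{-1}\,d\varphi^{-1}$ frames $\varphi(X)$, the kernel computation for $B$, and the boundary behaviour---so there is nothing to correct.
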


\begin{proof}
Let $\psi: [0,1]\times M \rightarrow M$ be a smooth map such that $\psi(t,\cdot)$ is a diffeomorphism for every $t$, $\psi(t,\cdot) = \mathrm{id}$ for every $t\in [0,\frac{1}{3}]$ and $\psi(t,\cdot) = \varphi$ for every $t\in [\frac{2}{3},1]$. Then the map
\[
\Psi: [0,1] \times M \rightarrow [0,1] \times M, \qquad (t,x) \mapsto (t,\psi(t,x)),
\]
is a diffeomorphism. Let $(W_0,B_0)$ be the trivial framed cobordism
\[
W_0:= [0,1]\times X, \qquad B_0(t,x) := S_L^* A(x) \qquad \forall (t,x)\in [0,1] \times M.
\]
Then the pair $(W,B)$ that is defined as
\[
W:= \Psi(W_0), \qquad B:= \Psi_* B_0 := B_0 \circ \Psi^{-1} d\Psi^{-1}
\]
is readily seen to be a framed cobordism from $(X,A)$ to  $(\varphi(X),A\circ \varphi^{-1} d\varphi^{-1})$.
\end{proof}

Assume that the Hilbert manifold $M$ is simply connected. In this case, framed submanifolds of $M$ are automatically orientable manifolds, see \cite[Remark 4.2]{ar20}. Actually, a framing $A: M \rightarrow \Phi_n(\mH)$ of the $n$-dimensional compact submanifold $X$ allows us to compare the orientations of the different connected components of $X$. Indeed, if $X_0$ and $X_1$ are connected components of $X$ and $\gamma:[0,1] \rightarrow M$ is a path such that $\gamma(0)\in X_0$ and $\gamma(1)\in X_1$, we can consider the line bundle
\[
(A\circ \gamma)^* \det \rightarrow [0,1],
\]
whose fiber at $0$, resp.\ $1$, is $\Lambda^n(T_{\gamma(0)} X_0)$, resp.\ $\Lambda^n(T_{\gamma(1)} X_1)$. We shall say that two orientations of $X_0$ and $X_1$, or equivalently of the one-dimensional spaces $\Lambda^n(T_{\gamma(0)} X_0)$ and $\Lambda^n(T_{\gamma(1)} X_1)$, are $A$-coherent if they extend to an orientation of the above trivial line bundle. The fact that $M$ is simply connected implies that this notion does not depend on the choice of the path connecting $X_0$ to $X_1$.

\section{The invariant $\tau$}
\label{tausec}

Throughout this section, we assume the Hilbert manifold $M$ to be simply connected. 

\begin{definition}
A continuous map $A: M \rightarrow \Phi_1(\mH)$ is said to be spin if the homomorphism
\[
\pi_2(A) : \pi_2(M) \rightarrow \pi_2 ( \Phi_1(\mH)) = \mZ_2
\]
is zero. A Fredholm map $f: M \rightarrow \mH$ of index one is said to be spin if its differential $df: M \rightarrow \Phi_1(\mH)$ is spin.
\end{definition}

\begin{remark}
\label{remspin}
Let us comment on our choice of using the term ``spin'' in this context. Let $N$ be a simply connected finite dimensional manifold. Recall that $N$ is said to be spin if the  second Whitney class $w_2(TN)$ of its tangent bundle vanishes. We can detect whether $N$ is spin from the differential of a map $f:N\rightarrow \mR^n$ as follows. 
 If considered separately,  $\ker df$ and $\coker df$ need not be vector bundles, but the $K$-theory class $\ker df- \coker df$ is well-defined (see \cite{ati89}). As $f$ is homotopic to a constant map, this $K$-theory class equals $[TN]-[\mR^n]$, where $\mR^n$ denotes the trivial rank $n$ bundle over $N$. This $K$-theory is classified by a map $\mu:N\rightarrow BO$. As $N$ is assumed to be simply connected, the second Stiefel-Whitney class $w_2(TN)$ vanishes if and only if $\pi_2(\mu):\pi_2(N)\rightarrow \pi_2(BO)=\mathbb{Z}_2$ is trivial, i.e.~the manifold $N$ is spin if and only if $\pi_2(\mu)=0$. Now let $M:=N\times \mH$ and define the Fredholm map $g:M\rightarrow \mR^n\times\mH$ by $g(x,y):=(f(x),y)$. By trivializing the tangent bundle of $M$ and identifying $\mR^n\times \mH$ with $\mH$, the differential of $g$ defines a map $dg:M\rightarrow \Phi(\mH)$. Now $\pi_2(dg)=0$ if and only if $\pi_2(\mu)=0$, hence $g$ is a spin Fredholm map if and only if $N$ is a spin manifold.
\end{remark}

That a map $M\rightarrow \Phi_1(\mH)$ is spin or not depends only on its homotopy class. Therefore, the set of homotopy classes of maps from $M$ into $\Phi_1(\mH)$ has the partition
\[
[ M, \Phi_1(\mH) ] = [ M, \Phi_1(\mH) ]_{\mathrm{sp}} \sqcup [ M, \Phi_1(\mH) ]_{\mathrm{ns}},
\]
where the first set denotes the set of spin homotopy classes and the second one the set of non-spin ones. If the one-dimensional framed submanifolds $(X_0,A_0)$ and $(X_1,A_1)$ are framed cobordant, then $A_0$ and $A_1$ are homotopic and hence are either both spin or both non-spin.

Let $(X,A)$ be a one-dimensional framed submanifold of $M$ and assume $A$ to be spin. Let $S\cong S^1$ be a connected component of $X$ and let $\varphi: \mD \rightarrow M$ be a continuous map such that $\varphi|_{\partial \mD}$ is a homeomorphism onto $S$. Since $A(x)$ belongs to $\Phi_1^0(\mH)$ for every $x\in S$, the composition $A\circ \varphi$ gives us a map
\begin{equation}
\label{phiA}
A\circ \varphi: (\mD,\partial \mD) \rightarrow (\Phi_1(\mH), \Phi_1^0(\mH)) ,
\end{equation}
and hence an element
\[
[A\circ \varphi] \in \pi_2 ( \Phi_1(\mH), \Phi_1^0(\mH)) = \mZ_2.
\]

\begin{lemma}
If $A$ is spin, then the element $[A\circ \varphi]\in \mZ_2$ is independent of the choice of the map $\varphi$.
\end{lemma}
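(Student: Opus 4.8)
The plan is to reformulate the claim in terms of the intersection number of Section~\ref{topsec} and then to glue two capping maps into a single map defined on a two-sphere, where the spin hypothesis becomes decisive. By Lemma~\ref{intnumb}, for any admissible $\varphi$ we have $[A\circ\varphi] = \eta(A\circ\varphi,\Phi_1^{\mathrm{sing}}(\mH))$ in $\mZ_2$, so it is enough to prove that this intersection number is the same for any two choices $\varphi_0,\varphi_1:\mD\to M$ whose restrictions $\gamma_0:=\varphi_0|_{\partial\mD}$ and $\gamma_1:=\varphi_1|_{\partial\mD}$ are homeomorphisms onto $S$.

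To this end I would form the closed surface obtained from two copies $\mD_+,\mD_-$ of $\mD$ by identifying $x\in\partial\mD_+$ with $\gamma_1^{-1}(\gamma_0(x))\in\partial\mD_-$. Gluing two disks along a homeomorphism of their boundary circles always yields a simply connected closed surface, hence a space homeomorphic to $S^2$. The maps $\varphi_0$ on $\mD_+$ and $\varphi_1$ on $\mD_-$ agree on the identified equatorial circle — a point $x\in\partial\mD_+$ is sent by $\varphi_0$ to $\gamma_0(x)\in S$, and its partner $\gamma_1^{-1}(\gamma_0(x))\in\partial\mD_-$ is sent by $\varphi_1$ to the same point — so they glue to a continuous map $\Psi:S^2\to M$. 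Since $M$ is simply connected, $\Psi$ represents a class $[\Psi]\in\pi_2(M)$; and since $A$ is spin, $\pi_2(A)=0$, so $A\circ\Psi:S^2\to\Phi_1(\mH)$ is nullhomotopic (here one uses that $\mathrm{Aut}(\mZ_2)$ is trivial, so that the action of $\pi_1(\Phi_1(\mH))$ on $\pi_2(\Phi_1(\mH))=\mZ_2$ is trivial and free and based homotopy classes of maps $S^2\to\Phi_1(\mH)$ coincide). By Lemma~\ref{intnumb}, $\eta(A\circ\Psi,\Phi_1^{\mathrm{sing}}(\mH))=0$.

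I would then finish by the additivity of the intersection number over the decomposition $S^2=\mD_+\cup\mD_-$, exactly as in Remark~\ref{hom-rel-bdry}: because $A\circ\varphi_0$ and $A\circ\varphi_1$ map the equatorial circle into $\Phi_1^0(\mH)$, a smooth perturbation of $A\circ\Psi$ that is transverse to $\Phi_1^1(\mH)$ and misses the higher strata can be chosen to restrict to such perturbations of $A\circ\varphi_0$ on $\mD_+$ and of $A\circ\varphi_1$ on $\mD_-$, and all intersection points with $\Phi_1^{\mathrm{sing}}(\mH)$ lie off the equator. Hence
\[
0 = \eta(A\circ\Psi,\Phi_1^{\mathrm{sing}}(\mH)) = \eta(A\circ\varphi_0,\Phi_1^{\mathrm{sing}}(\mH)) + \eta(A\circ\varphi_1,\Phi_1^{\mathrm{sing}}(\mH)) \pmod 2,
\]
so the two intersection numbers coincide, and with them the classes $[A\circ\varphi_0]$ and $[A\circ\varphi_1]$.

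The substantive points, both of which I expect to be easy, are: that gluing two disks along an arbitrary — possibly orientation-reversing — homeomorphism of their boundaries again produces a two-sphere, which legitimizes the use of $\pi_2$ and of Lemma~\ref{intnumb}; and that the spin condition, in the form of the vanishing of the homomorphism $\pi_2(A)$, is precisely what makes the glued map $A\circ\Psi$ nullhomotopic. No orientation data is needed anywhere, since $\eta$ is an unoriented intersection number. The rest is the transversality bookkeeping already set up in Section~\ref{topsec}, so the actual write-up will be short.
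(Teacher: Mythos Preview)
Your argument is correct and follows essentially the same route as the paper: glue the two capping disks along their common image $S$ to form a sphere, use the spin hypothesis to conclude that the composed map into $\Phi_1(\mH)$ is nullhomotopic, and deduce equality of the two classes. The only notable difference is bookkeeping: the paper first observes that $[A\circ\varphi]=[A\circ\widehat\varphi]$ (with $\widehat\varphi(z)=\varphi(\bar z)$) so as to reduce to the case where the two boundary parametrizations literally agree, and then works directly with the relative class in $\pi_2(\Phi_1(\mH),\Phi_1^0(\mH))$; you instead translate to intersection numbers via Lemma~\ref{intnumb} at the outset and glue along the boundary homeomorphism $\gamma_1^{-1}\circ\gamma_0$, relying on the fact that any such gluing of two disks yields $S^2$. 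Both handlings are fine and lead to the same short proof.
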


\begin{proof}
Denote by $\widehat{\varphi}$ the map $\widehat{\varphi}(z)=\varphi(\bar{z})$, where $\bar{z}$ is the complex conjugate of $z$. Since $\mZ_2$ has only two elements and $[A\circ \varphi]$ is zero if and only if the map (\ref{phiA}) is homotopic to a map taking values into $\Phi_1^0(\mH)$, we have
\[
[A\circ \varphi] = [A\circ \widehat{\varphi}].
\]
Now let $\psi: \mD \rightarrow M$ be another map mapping $\partial \mD$ homeomorphically onto $S$. Up to the possible replacement of $\varphi$ by $\widehat{\varphi}$, we may assume that the homeomorphisms
\[
\psi|_{\partial \mD}: \partial \mD \rightarrow S \qquad \mbox{and} \qquad \varphi|_{\partial \mD}: \partial \mD \rightarrow S
\]
are isotopic. Therefore, we can modify $\psi$ without affecting $[A\circ \psi]$ so that $\psi|_{\partial \mD} = \varphi|_{\partial \mD}$. By gluing the maps $\psi$ and $\varphi$ along the boundary of the disk we obtain a map
\[
\psi \# \varphi: S^2 \rightarrow M.
\]
The spin assumption on $A$ guarantees that
\[
\pi_2 ( A\circ (\psi \# \varphi)) : \pi_2(S^2) \rightarrow \pi_2(\Phi_1(\mH))
\]
is trivial and this gives us a homotopy from $A\circ \varphi$ to $A\circ \psi$ mapping $[0,1]\times \partial \mD$ into $\Phi_1^0(\mH)$. Therefore, $[A\circ \varphi] = [A\circ \psi]$.
\end{proof}

Thanks to the above lemma, we can give the following definition.

\begin{definition}
\label{sigmadef}
Let $(X,A)$ be a one-dimensional framed submanifold of the simply connected Hilbert manifold $M$ with $A: M \rightarrow \Phi_1(\mH)$ spin. For every connected component $S$ of $X$ we define 
\[
\sigma(S,A)\in \mZ_2
\]
to be the element
\[
[A\circ \varphi] \in \pi_2 (\Phi_1(\mH), \Phi_1^0(\mH)) = \mZ_2,
\]
where $\varphi: \mD \rightarrow M$ maps $\partial \mD$ homeomorphically onto $S$. 
\end{definition}

Thanks to Lemma \ref{intnumb}, $\sigma(S,A)$ can be expressed in terms of the intersection number with the space of non-surjective Fredholm operators of index one.

\begin{lemma}
If $\varphi: \mD \rightarrow M$ is a continuous map sending $\partial\mD$ homeomorphically onto a component $S$ of the one-dimensional framed submanifold $(X,A)$ of $M$, with $A$ spin, then
\[
\sigma(S,A) = \eta( A\circ \varphi, \Phi^{\mathrm{sing}}_1(\mH)).
\]
\end{lemma}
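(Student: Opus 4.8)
The statement to prove is essentially an unwinding of definitions combined with Lemma~\ref{intnumb}. The plan is as follows.

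\medskip

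\textbf{Step 1: Reduce to the definition of $\sigma(S,A)$.} By Definition~\ref{sigmadef}, $\sigma(S,A)$ is the element $[A\circ\varphi]\in\pi_2(\Phi_1(\mH),\Phi_1^0(\mH))=\mZ_2$ determined by the map $A\circ\varphi\colon(\mD,\partial\mD)\to(\Phi_1(\mH),\Phi_1^0(\mH))$, and by the preceding lemma this does not depend on the choice of $\varphi$. So it suffices to identify $[A\circ\varphi]$ with the intersection number $\eta(A\circ\varphi,\Phi_1^{\mathrm{sing}}(\mH))$ for one admissible choice of $\varphi$, and then invoke invariance.

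\medskip

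\textbf{Step 2: Apply Lemma~\ref{intnumb}.} Set $F:=A\circ\varphi$. This is a continuous map $(\mD,\partial\mD)\to(\Phi_1(\mH),\Phi_1^0(\mH))$, exactly the type of map covered by the first case of Lemma~\ref{intnumb}. That lemma asserts precisely that the intersection number $\eta(F,\Phi_1^{\mathrm{sing}}(\mH))\in\mZ_2$ coincides with the class $[F]\in\pi_2(\Phi_1(\mH),\Phi_1^0(\mH))=\mZ_2$. Combining this with Step~1 gives
\[
\sigma(S,A)=[A\circ\varphi]=\eta(A\circ\varphi,\Phi_1^{\mathrm{sing}}(\mH)),
\]
as claimed. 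One should note that $\eta(F,\Phi_1^{\mathrm{sing}}(\mH))$ is well-defined for a continuous $F$ because, as recalled before Lemma~\ref{intnumb}, $F$ can be perturbed within its homotopy class to be smooth, to miss $\Phi_1^j(\mH)$ for $j\ge2$, and to be transverse to $\Phi_1^1(\mH)$, and the resulting count modulo two is independent of the perturbation.

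\medskip

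There is essentially no obstacle here: the content has already been front-loaded into Lemma~\ref{intnumb} (whose proof in turn rests on Example~\ref{exsigma1}) and into the well-definedness lemma for $\sigma(S,A)$. The only point requiring a word of care is that the map $\varphi$ furnished by Definition~\ref{sigmadef} is merely continuous with $\varphi|_{\partial\mD}$ a homeomorphism onto $S$, so $A\circ\varphi$ is continuous but a priori not smooth; this is harmless since both $\eta(\,\cdot\,,\Phi_1^{\mathrm{sing}}(\mH))$ and the class in $\pi_2(\Phi_1(\mH),\Phi_1^0(\mH))$ are homotopy invariants and every homotopy class contains a smooth transverse representative.
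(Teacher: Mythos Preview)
Your proposal is correct and matches the paper's approach exactly: the paper does not even give a separate proof, merely stating ``Thanks to Lemma~\ref{intnumb}'' before the lemma, since the result is an immediate consequence of Definition~\ref{sigmadef} and Lemma~\ref{intnumb}. Your Steps~1 and~2 simply spell out this one-line argument.
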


The function $\sigma$ induces the following function $\tau$ on the set of submanifolds equipped with a spin framing.

\begin{definition}
In the setting of Definition \ref{sigmadef}, we define $\tau(X,A)\in \mZ_2$ to be the parity of the set of connected components $S$ of $X$ such that $\sigma(S,A)=0$.
\end{definition}

In the next proposition, we prove that $\tau$ is a framed cobordism invariant.

\begin{proposition}
\label{tauprop}
Let $(X_0,A_0)$ and $(X_1,A_1)$ be one-dimensional framed submanifolds of the simply connected Hilbert manifold $M$. If $(X_0,A_0)$ and $(X_1,A_1)$ are framed cobordant and the maps $A_0$ and $A_1$ are spin then
\[
\tau(X_0,A_0) = \tau(X_1,A_1).
\]
\end{proposition}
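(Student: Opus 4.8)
The plan is the following. Let $(W,B)$ be a framed cobordism from $(X_0,A_0)$ to $(X_1,A_1)$, so $W\subset [0,1]\times M$ is a compact surface with $\partial W=(\{0\}\times X_0)\sqcup(\{1\}\times X_1)$ and $B:[0,1]\times M\to\Phi_2(\mR\oplus\mH,\mH)$ is a framing. Since $\tau(X_i,A_i)$ depends only on $(X_i,A_i)$, I first perturb $W$ relative to the collars — transporting $B$ along an ambient isotopy as in Lemma~\ref{functors} — so that the height function $\pi|_W$, the restriction to $W$ of the projection $[0,1]\times M\to[0,1]$, is a Morse function with no critical point on $\partial W$; note that $W$ is orientable, being a framed submanifold of the simply connected $[0,1]\times M$ (cf.\ \cite[Remark~4.2]{ar20}). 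Using $\pi_1(M)=0$, for each boundary circle $C$ of $W$, lying in $X_{i(C)}$ at level $i(C)\in\{0,1\}$, choose $\varphi_C:\mD\to M$ mapping $\partial\mD$ homeomorphically onto $C$. Let $\hat W$ be the closed oriented surface obtained from $W$ by gluing a disk $D_C$ along each $C$, let $j:\hat W\to[0,1]\times M$ be the inclusion on $W$ and $\varphi_C$ on each $D_C$, and set $\Gamma:=(S_R^*B)\circ j:\hat W\to\Phi_1(\mH)$. The collar condition $B(i,x)=S_L^*A_i(x)$ gives $\Gamma|_{D_C}=A_{i(C)}\circ\varphi_C$, so $\Gamma$ maps each gluing circle into $\Phi_1^0(\mH)$; hence the intersection number with $\Phi_1^{\mathrm{sing}}(\mH)$ is additive over this decomposition of $\hat W$ (cf.\ Remark~\ref{hom-rel-bdry}) and, using the description of $\sigma$ via $\eta$ (the lemma after Definition~\ref{sigmadef}),
\[
\eta\bigl(\Gamma,\Phi_1^{\mathrm{sing}}(\mH)\bigr)=\eta\bigl(\Gamma|_W,\Phi_1^{\mathrm{sing}}(\mH)\bigr)+\sum_{C}\sigma\bigl(C,A_{i(C)}\bigr),
\]
the sum ranging over the boundary circles of $W$. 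Everything reduces to evaluating the two intersection numbers on the right.

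Next I would show $\eta(\Gamma,\Phi_1^{\mathrm{sing}}(\mH))=0$. The assignment $[\Sigma,F]\mapsto\eta(F,\Phi_1^{\mathrm{sing}}(\mH))$ is additive and vanishes on oriented boundaries — if $(\Sigma,F)=\partial(Y^3,\tilde F)$, then after making $\tilde F$ transverse to the strata $\Phi_1^j(\mH)$ rel $\partial Y$ (possible since $\codim\Phi_1^j(\mH)=j(j+1)$ is $2$ for $j=1$ and $\geq 6>3$ for $j\geq 2$), $\tilde F^{-1}(\Phi_1^1(\mH))$ is a compact $1$-manifold with boundary $\Gamma^{-1}(\Phi_1^1(\mH))$, which is therefore even — so it descends to a homomorphism $\Omega_2^{\mathrm{SO}}(\Phi_1(\mH))\to\mZ_2$. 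Now $\Gamma$ factors as $\hat W\xrightarrow{\,j\,}[0,1]\times M\xrightarrow{\,S_R^*B\,}\Phi_1(\mH)$, and $\Omega_2^{\mathrm{SO}}([0,1]\times M)\cong\Omega_2^{\mathrm{SO}}(M)\cong H_2(M;\mZ)\cong\pi_2(M)$ (the Atiyah--Hirzebruch spectral sequence collapses in total degree $2$ because $\Omega_1^{\mathrm{SO}}=\Omega_2^{\mathrm{SO}}(\mathrm{pt})=0$, and then Hurewicz applies as $M$ is simply connected); hence $[\hat W,j]=[S^2,s]$ for some $s:S^2\to[0,1]\times M$. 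Therefore $\eta(\Gamma,\Phi_1^{\mathrm{sing}}(\mH))=\eta\bigl((S_R^*B)\circ s,\Phi_1^{\mathrm{sing}}(\mH)\bigr)$, which by Lemma~\ref{intnumb} equals $[(S_R^*B)\circ s]\in\pi_2(\Phi_1(\mH))=\mZ_2$. Since $S_R^*B$ is homotopic to its restriction $A_0$ on the slice $\{0\}\times M$, and $A_0$ is spin by hypothesis, $\pi_2(S_R^*B)=\pi_2(A_0)=0$, so this class vanishes.

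Then I would compute $\eta(\Gamma|_W,\Phi_1^{\mathrm{sing}}(\mH))\equiv\chi(W)\pmod 2$. Since $\ker B(p)=T_pW$ is two-dimensional and $B$ has index $2$, we have $\coker B(p)=0$ for $p\in W$, so $B|_W$ takes values in the surjective operators $\Phi_2^0(\mR\oplus\mH,\mH)$; exactly as for $\Phi_1^0(\mH)$ in Section~\ref{topsec}, the map $A\mapsto\ker A$ exhibits $\Phi_2^0(\mR\oplus\mH,\mH)$ as homotopy equivalent to $\mathrm{Gr}_2(\mR\oplus\mH)$ (contractible $\mathrm{GL}$-fibers, by Kuiper \cite{kui65}), under which $B|_W$ becomes the Gauss map $p\mapsto T_pW$, i.e.\ the classifying map of $TW$. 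Moreover $S_R^*:\Phi_2^0(\mR\oplus\mH,\mH)\to\Phi_1(\mH)$ is a submersion with $(S_R^*)^{-1}(\Phi_1^j(\mH))=\emptyset$ for $j\geq 2$ and $(S_R^*)^{-1}(\Phi_1^{\mathrm{sing}}(\mH))$ corresponding to the codimension-two submanifold $\mathrm{Gr}_2(\mH)\subset\mathrm{Gr}_2(\mR\oplus\mH)$. Hence $\Gamma|_W=S_R^*\circ B|_W$ meets $\Phi_1^{\mathrm{sing}}(\mH)$ exactly at the points $p$ with $T_pW\subset\mH$, that is, at the critical points of $\pi|_W$; and since the $[0,1]$-component of the derivative of the Gauss map at such a point is the Hessian of $\pi|_W$, the Morse hypothesis makes all these intersections transverse. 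Thus $\eta(\Gamma|_W,\Phi_1^{\mathrm{sing}}(\mH))$ equals the number of critical points of $\pi|_W$ modulo two, which by the Poincar\'e--Hopf theorem applied to $\nabla(\pi|_W)$ (pointing inward on the level-$0$ boundary circles, outward on the level-$1$ ones) is $\chi(W)\bmod 2$. As $W$ is orientable, $\chi(W)=\sum_m(2-2g_m-r_m)\equiv\sum_m r_m=b\pmod 2$, where $b$ is the number of boundary circles of $W$.

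Combining the three displays: $0=\eta(\Gamma,\Phi_1^{\mathrm{sing}}(\mH))\equiv b+\sum_C\sigma(C,A_{i(C)})\pmod 2$, so $\sum_C\sigma(C,A_{i(C)})\equiv b$, and therefore
\[
\tau(X_0,A_0)+\tau(X_1,A_1)=\sum_C\bigl(1-\sigma(C,A_{i(C)})\bigr)\equiv b-b=0\pmod 2,
\]
which is the assertion. The step I expect to be the main obstacle is the third paragraph: converting "$\Gamma|_W$ meets $\Phi_1^{\mathrm{sing}}(\mH)$ precisely at $\mathrm{Crit}(\pi|_W)$, transversally" into the stated equality of intersection numbers requires the explicit description of the normal bundle of $\Phi_1^1(\mH)$ (and of $\mathrm{Gr}_2(\mH)$ in $\mathrm{Gr}_2(\mR\oplus\mH)$) from Appendix~\ref{appsec} and Proposition~\ref{singular}, the verification that $S_R^*$ is a submersion compatible with these normal directions, and the differential-geometric identity between the Gauss map and the Hessian. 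A subsidiary point to be checked is that the preliminary perturbation of $W$ making $\pi|_W$ Morse can be accompanied by a framing agreeing with $B$ on the collars — handled by transporting $B$ along an ambient isotopy, in the spirit of Lemma~\ref{functors}.
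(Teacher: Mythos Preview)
Your proof is correct and follows essentially the same route as the paper's: cap the cobordism $W$ by disks to obtain a closed surface, split the intersection number with $\Phi_1^{\mathrm{sing}}(\mH)$ into the contribution from $W$ (which equals the number of Morse critical points of the height function, hence the number of boundary circles mod two) and the contributions from the capping disks (which give the $\sigma$'s), and show the total vanishes using the spin hypothesis. The only noteworthy deviation is in the last step: where the paper collapses the $1$-skeleton of the capped surface to a wedge of spheres by an elementary CW argument, you invoke $\Omega_2^{\mathrm{SO}}(M)\cong H_2(M;\mZ)\cong\pi_2(M)$ via the Atiyah--Hirzebruch spectral sequence; both achieve the same reduction to $\pi_2(S_R^*B)=0$, yours being a cleaner packaging at the cost of heavier machinery. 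Your transversality discussion (Gauss map into $\mathrm{Gr}_2(\mR\oplus\mH)$, intersection with $\mathrm{Gr}_2(\mH)$, Hessian nondegeneracy) is exactly the content of Proposition~\ref{apptrans} in the appendix, which the paper simply cites.
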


\begin{proof}
Denote by $(W,B)$ a framed cobordism from $(X_0,A_0)$ to $(X_1,A_1)$. Then $W$ is a compact two-dimensional submanifold of $[0,1]\times M$ and is orientable, thanks to the fact that $[0,1]\times M$ is simply connected (see \cite[Remark 4.2]{ar20}).
Up to an ambient isotopy we may assume that the function
\[
[0,1]\times M \rightarrow \mR, \qquad (t,x) \mapsto t,
\]
restricts to a Morse function on $W$, which we denote by $\mu$. By standard facts about height-functions, this is equivalent to the fact that the map
\[
W \rightarrow \mathrm{Gr}_2(\mR \oplus \mH), \qquad (t,x) \mapsto T_{(t,x)} W,
\]
is transverse to the Banach submanifold $\mathrm{Gr}_2(\mH)$ of $ \mathrm{Gr}_2(\mR \oplus \mH)$ consisting of 2-planes that are contained in $(0)\oplus \mH$ (as usual, we are identifying the tangent spaces to $M$ with $\mH$ by means of the fixed trivialization of $TM$). The set  $\mathrm{crit}\, \mu$ of critical points of $\mu$ is precisely the inverse image of $\mathrm{Gr}_2(\mH)$ by the above map. Since $B$ is a framing of $W$, the above map agrees with
\[
W \rightarrow \mathrm{Gr}_2(\mR \oplus \mH), \qquad (t,x) \mapsto \ker B(t,x).
\]
From Proposition \ref{apptrans} in Appendix \ref{appsec} we deduce that the critical points of $\mu$ are precisely the points $(t,x)\in W$ at which the map
\[
S_R^* B: W \rightarrow \Phi_1(\mH)
\]
intersects $\Phi^{\mathrm{sing}}_1(\mH)$, and that the intersection occurs at $\Phi_1^1(\mH)$ and is transverse.
We conclude that
\[
\eta(S_R^*B|_W, \Phi^{\mathrm{sing}}_1(\mH)) = \# \mathrm{crit}\, \mu \mod 2.
\]
Morse theory on the orientable surface with boundary $W$ implies that the latter number coincides modulo two with the number of boundary components of $W$ and hence
\[
\eta(S_R^*B|_W, \Phi^{\mathrm{sing}}_1(\mH)) = |X_0| + |X_1| \mod 2,
\]
where $|X_0|$ and $|X_1|$ denote the number of connected components of $X_0$ and $X_1$.
Let $\Sigma$ be the closed surface that is obtained from $W$ by capping all the boundary circles by disks. Choose a capping disk $\varphi: \mD \rightarrow M$ for each connected component of $X_0$ and for each connected component of $X_1$. By using the maps $\varphi$, we can extend the embedding $W\hookrightarrow [0,1]\times M$ to a continuous map 
\[
\psi: \Sigma \rightarrow [0,1] \times M
\]
and we define the map
\[
F : \Sigma \rightarrow \Phi_1(\mH), \qquad F := S_R^*B\circ \psi.
\]
We now consider the intersection number of $F$ with $\Phi^{\mathrm{sing}}_1(\mH)$. Since the capping disk corresponding to the component $S$ of $X_0$, respectively of $X_1$, contributes by $\sigma(S,A_0)$, respectively $\sigma(S,A_1)$, we obtain
\[
\begin{split}
\eta(F, \Phi^{\mathrm{sing}}_1(\mH)) &= \eta(S_R^*B|_W, \Phi^{\mathrm{sing}}_1(\mH)) + \sum_S \sigma(S,A_0) + \sum_S\sigma(S,A_1) \\ &= |X_0| + |X_1| + \sum_S \sigma(S,A_0) + \sum_S\sigma(S,A_1),\end{split}
\]
where the first sum ranges over the connected components of $X_0$ and the second one over those of $X_1$. Note that 
\[
\begin{split}
|X_0| + \sum_S \sigma(S,A_0) &= \tau(X_0,A_0) \mod 2, \\
|X_1| + \sum_S \sigma(S,A_1) &= \tau(X_1,A_1) \mod 2,
\end{split}
\]
and hence the previous identity reads
\begin{equation}
\label{deves0}
\eta(F, \Phi^{\mathrm{sing}}_1(\mH)) = \tau(X_0,A_0) + \tau(X_1,A_1).
\end{equation}

The surface $\Sigma$ has the structure of a two-dimensional CW-complex, which we can choose to have only one two-cell for each connected component of $\Sigma$. Since the space $[0,1]\times M$ is simply connected, $\psi$ is homotopic to a map $\psi'$ that is constant on the one-skeleton of each connected component of $\Sigma$. Collapsing the one-skeleton of each connected component of $\Sigma$ produces a disjoint union of spheres, which we denote by $\Sigma'$, and the map $\psi'$ factorizes through it: 
\[
\psi': \Sigma \stackrel{\psi_1'}{\longrightarrow} \Sigma' \stackrel{\psi'_2}{\longrightarrow} [0,1]\times M.
\]
Therefore, the map $F= S_R^* B \circ \psi$ is homotopic to a map $F'$ that factorizes as
\[
F': \Sigma \stackrel{\psi_1'}{\longrightarrow} \Sigma' \stackrel{\psi_2'}{\longrightarrow} [0,1]\times M \stackrel{S_R^*B}{\longrightarrow} \Phi_1(\mH).
\]
By the orientability assumption on $A_0$ and $A_1$, we have that $\pi_2(S^*_RB)=0$, and hence $S_R^* B\circ \psi_2'$ is homotopic to a constant map. We conclude that $F'$ is homotopic to a constant map, and so is $F$. From this it follows that
\[
\eta(F, \Phi^{\mathrm{sing}}_1(\mH)) = 0,
\]
and identity (\ref{deves0}) implies the equality of $\tau(X_0,A_0)$ and $\tau(X_1,A_1)$,
\end{proof}

Thanks to the above result and to the fact that the Pontryagin manifold is uniquely defined up to framed cobordism, we can give the following definition.

\begin{definition}
Let $f: M \rightarrow \mH$ be a spin proper Fredholm map of index one on a simply connected Hilbert manifold $M$. Then $\tau(f)\in \mZ_2$ is defined as $\tau(f^{-1}(y),df)$, where $y\in \mH$ is any regular value of $f$.
\end{definition}

The number $\tau(f)$ is invariant under proper Fredholm homotopies.

\section{The models}
\label{modelsec}

In this section, we study three particular one-dimensional framed submanifolds of the Hilbert space $\ell^2$ of square summable real sequences which will serve as models for more general one-dimensional framed submanifolds. The standard basis of $\ell^2$ is denoted by $e_1,e_2,e_3,\dots$. 

\begin{example}
\label{exf}
The first example is induced by the proper smooth function
\[
f_0: \mR^2 \rightarrow \mR, \qquad f_0(x,y) := x^2+y^2,
\]
which lifts to the proper Fredholm map of index one
\[
f: \ell^2 \rightarrow \ell^2, \qquad (u_1,u_2,u_3,\dots) \mapsto (f_0(u_1,u_2),u_3, u_4, \dots).
\]
The vector $e_1$ is a regular value of $f$ and its inverse image is the circle
\begin{equation}
\label{defnS0}
S_0 := f^{-1}(e_1) =  \{  (u_1,u_2,0,0,,\dots)\in \ell^2  \mid u_1^2 + u_2^2 = 1\}.
\end{equation}
Therefore, $(S_0,df)$ is a one-dimensional framed submanifold of $\ell^2$. Note that, up to the multiplication factor 2, the restriction of $df: \ell^2 \rightarrow \Phi_1(\mH)$ to the disk
\begin{equation}
\label{defnD0}
D_0 := \{(u_1,u_2,0,0,\dots) \in \ell^2 \mid u_1^2 + u_2^2 \leq 1\} 
\end{equation}
is the map that was considered in Example \ref{exsigma1} and hence
\[
\sigma(S_0,df) = 1 \qquad \mbox{and} \qquad \tau(S_0,df) = 0.
\]
\end{example}

\begin{example}
\label{exg}
Our second model will be a proper Fredholm map of index one having as Pontryagin manifold a circle with vanishing $\sigma$. We start from the smooth map
\[
g_0: \mC^2 \rightarrow \mC\times \mR \cong \mR^3, \qquad (z_1,z_2) \mapsto (2 z_1 \bar{z}_2, |z_1|^2-|z_2|^2),
\]
which satisfies
\[
|g_0(z_1,z_2)|^2 = ( |z_1|^2 + |z_2|^2)^2 \qquad \forall (z_1,z_2)\in \mC^2,
\]
and hence is proper and maps $S^3$ to $S^2$. Its restriction to $S^3$ is the Hopf fibration. Its lift to $\ell^2$ is the proper Fredholm map of index one
\[
g: \ell^2 \rightarrow \ell^2, \qquad (u_1,u_2,u_3,\dots) \mapsto (g_0(u_1+ i u_2,u_3+iu_4),u_5,  \dots).
\]
The vector $e_3$ is a regular value of $g$ and its inverse image is the circle $S_0$ that we introduced in (\ref{defnS0}):
\[
g^{-1}(e_3) = S_0.
\]
Therefore, $(S_0,dg)$ is a one-dimensional framed submanifold of $\ell^2$. The circle $S_0$ can be seen as the boundary of the embedded disk
\begin{equation}
\label{halfsphere}
\varphi: \mD \rightarrow \ell^2, \qquad (x,y) \mapsto (x,y,\sqrt{1-x^2-y^2},0,0,\dots),
\end{equation}
and since $dg(u)$ is surjective for every $u\in \varphi(\mD)$ we have
\[
\sigma(S_0,dg) = \eta(dg\circ \varphi,\Phi_1^{\mathrm{sing}}(\ell^2)) = 0 \qquad \mbox{and} \qquad \tau(S_0,dg) = 1.
\]
\end{example}

\begin{example}
\label{exh}
The third example will be a proper Fredholm map of index one having as Pontryagin manifold a pair of circles with vanishing $\sigma$. Consider the map
\[
p_0: \mC\times \mR \rightarrow \mC \times \mR, \qquad (z,t) \mapsto (z,t^2),
\]
which maps $S^2$ into the paraboloid
\[
P:= \{(z,1-|z|^2) \mid z\in \mC\}.
\]
More precisely, $p_0$ maps both the upper and the lower hemispheres of $S^2$ diffeomorphically onto the open disk
\[
P^+:= \{(z,t)\in P \mid t>0\},
\]
and is the identity on the equator. By composing the Hopf map $g_0$ from Example \ref{exg} with $p_0$ we obtain the map
\[
h_0:= p_0\circ g_0: \mC^2 \rightarrow \mC \times \mR\cong \mR^3,
\]
sending $S^3$ into $P$. The point $(0,0,1)\in \mR^3$ is a regular value of $h_0$ and its inverse image consists of two circles:
\[
h_0^{-1}(0,0,1) = \{(z_1,0) \mid |z_1|=1\} \cup \{ (0,z_2) \mid |z_2|=1 \} \subset \mC^2.
\]
It is useful to construct an explicit cobordism from the union of these two circles to the empty set: Let $\chi: [0,1] \rightarrow \mR$ be a smooth monotonically increasing function such that $\chi=0$ on $[0,\frac{1}{4}]$, $\chi(\frac{1}{2})=1$, $\chi'(\frac{1}{2})>0$ and $\chi=2$ on $[\frac{3}{4},1]$. Let $\gamma: [0,1] \rightarrow P \subset \mR^3$ be the path
\[
\gamma(t):= \bigl( \chi(t), 0, 1 - \chi(t)^2 \bigr),
\]
and consider the smooth homotopy
\[
k_0: [0,1] \times \mC^2 \rightarrow \mC \times \mR \cong \mR^3, \qquad (t,z_1,z_2) \mapsto h_0(z_1,z_2) - \gamma(t).
\]
The map $k_0$ is easily shown to have the following properties:
\begin{enumerate}[(a$_0$)]
\item $dk_0(t,z_1,z_2)[(s,v)]  = dh_0(z_1,z_2)[v]$ for every $(t,z_1,z_2)\in ([0,\frac{1}{4}]\cup [\frac{3}{4},1]) \times \mC^2$ and every $(s,v)\in \mR \times \mC^2$;
\item $0$ is a regular value for $k_0$, the surface $k_0^{-1}(0)$ is contained in $[0,\frac{1}{2} ]\times \mC^2$ and its projection to the second factor is the annulus
\[
\Sigma_0 := \Bigl\{(z\cos s, z\sin s) \mid z\in \mC, \; |z|=1,\; s\in \bigl[0,{\textstyle \frac{\pi}{2}} \bigr] \Bigr\} \subset S^3 \subset \mC^2.
\]
\item $k_0^{-1}(0) \cap ([0,\frac{1}{4}] \times \mC^2) = [0,\frac{1}{4}] \times h_0^{-1}(0,0,1)$; in particular, the boundary of $k_0^{-1}(0)$ is given by the two circles $\{0\} \times h_0^{-1}(0,0,1)$.
\end{enumerate}
The surface $k_0^{-1}(0) \subset [0,1]\times \mC^2$ is the desired cobordism from $h_0^{-1}(0,0,1)$ to the empty set.

The maps $p_0$ and $h_0$ can be lifted to smooth self-maps of $\ell^2$ as follows:
\[
\begin{split}
p: \ell^2 \rightarrow \ell^2, \qquad (u_1,u_2,u_3, \dots) &\mapsto (p_0(u_1+iu_2,u_3), u_4, u_5,\dots), \\
h: \ell^2 \rightarrow \ell^2, \qquad (u_1,u_2,u_3, \dots) &\mapsto (h_0(u_1 + i u_2, u_3 + i u_4), u_5,u_6, \dots).
\end{split}
\]
The map $p$ is Fredholm of index zero, whereas $h$ is Fredholm of index one. The vector $e_3$ is a regular value of $h$ and
\[
h^{-1}(e_3) = S_0 \cup S_1,
\]
where $S_0$ is the circle introduced in (\ref{defnS0}) and $S_1$ is the circle
\begin{equation}
\label{defnS1}
S_1 := \{ (0,0,u_3,u_4, 0,0, \dots)\in \ell^2 | u_3^2+u_4^2 = 1\}.
\end{equation}
Therefore, 
\[
dh: \ell^2 \rightarrow \Phi_1(\ell^2) 
\]
is a framing of $S_0 \cup S_1$. The value of $\sigma(S_0,dh)$ and $\sigma(S_1,dh)$ are computed in Lemma \ref{sigmalemma} below. Similarly, the homotopy $k_0$ lifts to the Fredholm homotopy of index two
\[
k: [0,1] \times \ell^2 \rightarrow \ell^2, \qquad (t,u_1,u_2,u_3,\dots) \mapsto (k_0(t,u_1+iu_2,u_3 + i u_4),u_5,u_6, \dots).
\]
Conditions (a$_0$), (b$_0$) and (c$_0$) translate into:
\begin{enumerate}[(a)]
\item $dk(t,u) = S_L^* dh (u)$ for every $(t,u)\in ([0,\frac{1}{4}]\cup [\frac{3}{4},1]) \times \ell^2$;
\item $0$ is a regular value for $k$, the surface $k^{-1}(0)$ is contained in $[0,\frac{1}{2}]\times \ell^2$ and its projection to the second factor is the annulus
\begin{equation}
\label{Sigmadef}
\Sigma := \Bigl\{(x\cos s, y\cos s, x \sin s, y \sin s,0,0,\dots) \in \ell^2 \mid x^2+y^2=1,\; s\in \bigl[0,{\textstyle \frac{\pi}{2}} \bigr] \Bigr\}.
\end{equation}
\item $k^{-1}(0) \cap ([0,\frac{1}{4}] \times \ell^2) = [0,\frac{1}{4}] \times (S_0 \cup S_1)$; in particular, the boundary of $k^{-1}(0)$ is given by the two circles $\{0\} \times (S_0\cup S_1)$.
\end{enumerate}
Therefore, $(dk,k^{-1}(0))$ is a framed cobordism from $(S_0\cup S_1,dh)$ to $(\emptyset,dh)$.
\end{example}

We now compute the value of $\sigma$ at $(S_0,dh)$ and $(S_1,dh)$.

\begin{lemma}
\label{sigmalemma}
$\sigma(S_0,dh) = \sigma(S_1,dh) = 0$.
\end{lemma}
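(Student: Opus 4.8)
The plan is to compute $\sigma(S_0,dh)$ directly from a convenient choice of capping disk, and then to obtain $\sigma(S_1,dh)=0$ by an essentially identical computation (or by a symmetry). Since $\ell^2$ is contractible, every map $\ell^2\rightarrow\Phi_1(\ell^2)$ is spin, so both quantities are well defined. For $S_0$ I would use the capping disk $\varphi:\mD\rightarrow\ell^2$ of (\ref{halfsphere}) --- the one already used in Example \ref{exg} --- recalling from there that $dg$ is surjective at every point of $\varphi(\mD)$, so that $dg\circ\varphi$ is in fact a map $\mD\rightarrow\Phi_1^0(\ell^2)$.

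The starting point is the factorization $h=p\circ g$, which gives
\[
dh\circ\varphi \;=\; \bigl(dp\circ g\circ\varphi\bigr)\circ\bigl(dg\circ\varphi\bigr),
\]
and the first factor is completely explicit: since $p$ fixes every coordinate except the third, which it squares, $dp(v)$ is the operator multiplying the third coordinate by $2v_3$ and leaving the others unchanged, while a short computation shows that the third coordinate of $g(\varphi(x,y))$ equals $2(x^2+y^2)-1$. Writing $r^2=x^2+y^2$, we thus get $dp(g(\varphi(x,y)))=D_r$, where $D_r$ multiplies the third coordinate by $4r^2-2$ and fixes the rest; in particular $D_1$ is invertible. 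So $dh\circ\varphi$ is the map $(x,y)\mapsto D_r\circ(dg\circ\varphi)(x,y)$.

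Next I would prove $[dh\circ\varphi]=0$ in $\pi_2(\Phi_1(\ell^2),\Phi_1^0(\ell^2))$ by two homotopies of maps of pairs $(\mD,\partial\mD)\rightarrow(\Phi_1(\ell^2),\Phi_1^0(\ell^2))$. First, as $\mD$ is contractible and $dg\circ\varphi$ takes values in $\Phi_1^0(\ell^2)$, contract it within $\Phi_1^0(\ell^2)$ to the constant operator $A_0:=dg(\varphi(0,0))$ and post-compose this contraction with the fixed family $D_r$; over $\partial\mD$ the result stays in $\Phi_1^0(\ell^2)$, because there $D_1$ is invertible and $dg\circ\varphi$ is surjective. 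This identifies $[dh\circ\varphi]$ with $[(x,y)\mapsto D_r\circ A_0]$. Second, deform the scalar $4r^2-2$ to the constant $1$ along $\lambda_s(r)=(1-s)(4r^2-2)+s$; since $\lambda_s(1)=2-s\neq 0$, the associated operators remain invertible over $\partial\mD$, so this is again a homotopy of maps of pairs, ending at the constant map $A_0\in\Phi_1^0(\ell^2)$. Hence $\sigma(S_0,dh)=0$.

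For $S_1$ the same argument runs verbatim with the capping disk $(x,y)\mapsto(\sqrt{1-x^2-y^2},0,x,y,0,0,\dots)$, for which the relevant third coordinate becomes $1-2r^2$ and one deforms the scalar to a negative constant. Alternatively, the linear isometry of $\ell^2$ that interchanges the first pair of coordinates with the second conjugates $h$ up to a sign on one coordinate of the target, and pre- and post-composition by fixed isomorphisms is a stratified diffeomorphism of $\Phi_1(\ell^2)$ preserving $\Phi_1^{\mathrm{sing}}(\ell^2)$, so $\sigma(S_1,dh)=\sigma(S_0,dh)=0$; one could also deduce this from Proposition \ref{tauprop} using the cobordism of Example \ref{exh}. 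The one delicate point is that, for these natural capping disks, $dh\circ\varphi$ meets $\Phi_1^{\mathrm{sing}}(\ell^2)$ along the whole circle $r=1/\sqrt2$ rather than transversally in isolated points, so $\sigma$ cannot be read off from a naive count; it vanishes precisely because the $p$-part of $h$ moves the operator only in the one-dimensional ``third coordinate'' direction, a codimension-one slice of the two-dimensional normal bundle of $\Phi_1^{\mathrm{sing}}(\ell^2)$, which is exactly what makes the two homotopies above legitimate.
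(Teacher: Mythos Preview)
Your argument is correct and follows essentially the same strategy as the paper: both exploit the factorization $h=p\circ g$ together with the fact that $dg\circ\varphi$ already lands in $\Phi_1^0(\ell^2)$, and both deduce the $S_1$ case from the $S_0$ case via a symmetry of $h$. The only difference is cosmetic: the paper carries out the $S_0$ step with a single homotopy, namely the linear interpolation $p_t=tp+(1-t)\,\mathrm{id}$ (using that $g(S_0)=\{e_3\}$ and $dp_t(e_3)\in\mathrm{GL}(\ell^2)$), rather than your two-step contraction, and for $S_1$ it uses the involution $s:(u_1,u_2,u_3,u_4,\dots)\mapsto(u_3,-u_4,u_1,-u_2,\dots)$ which satisfies $h\circ s=h$ exactly.
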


\begin{proof}
We can reduce the computation of $\sigma(S_0,dh)$ to the identity $\sigma(S_0,dg)=0$ established in Example \ref{exg}: The homotopy
\[
p_t := t p + (1-t) \, \mathrm{id} : \ell^2 \rightarrow \ell^2, \qquad t\in [0,1],
\]
is  Fredholm and satisfies
\[
dp_t(e_3) \in \mathrm{GL}(\ell^2) \qquad \forall t\in [0,1].
\]
Let $\varphi: \mD \rightarrow \ell^2$ be the embedding defined in (\ref{halfsphere}) and recall that $dg\circ \varphi$ does not meet the singular set $\Phi_1^{\mathrm{sing}}(\ell^2)$.
Since $g(S_0) = \{e_3\}$, we deduce that the homotopy
\[
dp_t \circ g \, dg \circ \varphi: \mD \rightarrow \Phi_1(\ell^2)
\]
maps $\partial \mD$ into $\Phi_1^0(\ell^2)$ for every $t\in [0,1]$. As this homotopy connects $dg\circ \varphi$ to $dh\circ \varphi$, we deduce that
\[
\sigma(dh,S_0) = \eta ( dh\circ \varphi, \Phi_1^{\mathrm{sing}}(\ell^2)) = \eta (dg\circ \varphi, \Phi_1^{\mathrm{sing}}(\ell^2)) = \sigma(S_0,dg) = 0.
\]
The computation of $\sigma(dh,S_1)$ can be reduced to the computation above. Indeed, the involution
\[
s_0: \mC^2 \rightarrow \mC^2, \qquad (z_1,z_2) \mapsto (\bar{z}_2,\bar{z}_1),
\]
satisfies $h_0\circ s_0=h_0$ and hence its lift
\[
s : \ell^2 \rightarrow \ell^2, \qquad (u_1,u_2,u_3, \dots) \mapsto (u_3,-u_4,u_1,-u_2,u_5,u_6, \dots),
\]
satisfies $h\circ s= h$. We have
\[
\sigma(dh,S_1) = \sigma(dh\circ s \, ds, S_0) = \sigma(dh,S_0) = 0,
\]
where the first equality follows from the fact that $s$ is a diffeomorphism mapping $S_0$ to $S_1$ and the second one by differentiating the identity $h \circ s = h$.
\end{proof}

We conclude this section by discussing the coherence of the orientations of the circles $S_0$ and $S_1$ with respect to the framing $dh$, as introduced at the end of Section \ref{prelsec}. The embedding
\[
\left[ 0 , {\textstyle \frac{\pi}{2}} \right] \times S^1 \rightarrow \ell^2, \qquad (s,z) \mapsto (\mathrm{Re}\, z \cos s, \mathrm{Im}\, z \cos s, \mathrm{Re}\, z \sin s, \mathrm{Im}\, z \sin s,0,0,\dots)
\]
has image $\Sigma$, see (\ref{Sigmadef}). We give $[0,\frac{\pi}{2}]$ and $S^1$ the standard orientations, $[0,\frac{\pi}{2} ]\times S^1$ the product orientation, $\Sigma$ the orientation that is induced by the above embedding and $S_0\cup S_1=\partial \Sigma$ the boundary orientation. This defines an orientation of these two circles that we shall refer to as the standard orientation of $S_0$ and $S_1$. One easily checks that the tangent vectors
\begin{equation}
\label{posor}
-e_2 \in T_{e_1} S_0 \qquad \mbox{and} \qquad e_4 \in T_{e_3} S_1
\end{equation}
are positively oriented.

\begin{lemma}
\label{orlemma}
The standard orientations of $S_0$ and $S_1$ are $dh$-coherent.
\end{lemma}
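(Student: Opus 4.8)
The plan is to verify coherence directly along a convenient path, exploiting the symmetry of $h$ under the linear involution $s$ from the proof of Lemma~\ref{sigmalemma}. Since $\ell^2$ is simply connected the choice of path is immaterial, so I will use
\[
\gamma\colon\left[0,\tfrac{\pi}{2}\right]\to\ell^2,\qquad \gamma(\sigma)=(\cos\sigma,0,\sin\sigma,0,0,\dots),
\]
which runs from $e_1\in S_0$ to $e_3\in S_1$. As $e_1$ and $e_3$ are regular points of $h$, the line bundle $L:=(dh\circ\gamma)^{*}\!\det$ over $[0,\tfrac{\pi}{2}]$ has fibre $\Lambda^{1}(\ker dh(e_1))=\Lambda^{1}T_{e_1}S_0$ at $\sigma=0$ and $\Lambda^{1}(\ker dh(e_3))=\Lambda^{1}T_{e_3}S_1$ at $\sigma=\tfrac{\pi}{2}$; by (\ref{posor}) the standard orientations are the rays $[-e_2]$ and $[e_4]$, and the statement to prove is that these rays extend to a single orientation of $L$.

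The reflection $\rho(\sigma)=\tfrac{\pi}{2}-\sigma$ satisfies $s(\gamma(\sigma))=\gamma(\rho(\sigma))$, and from $h\circ s=h$ one gets $dh(\gamma(\sigma))\circ s=dh(\gamma(\rho(\sigma)))$. Hence right composition with the fixed isomorphism $s$ induces a continuous bundle involution $\Theta\colon L\to L$ covering $\rho$. Choosing a nowhere-vanishing section $e(\sigma)$ of $L$ and writing $\Theta(e(\sigma))=f(\sigma)\,e(\rho(\sigma))$ with $f$ continuous and nowhere zero, the identity $\Theta^{2}=\mathrm{id}$ forces $f(\sigma)f(\rho(\sigma))=1$; in particular $f(\tfrac{\pi}{4})^{2}=1$. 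Since $f$ has constant sign, the sign of $f$ on all of $[0,\tfrac{\pi}{2}]$ — equivalently whether $\Theta_0\colon L_0\to L_{\pi/2}$ agrees up to a positive factor with the canonical trivialization of $L$ over the interval — is determined by the scalar $\Theta_{\pi/4}\in\{\pm1\}$ acting on the fibre $L_{\pi/4}$.

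The heart of the argument is thus the computation of $\Theta_{\pi/4}$. Put $m:=\gamma(\tfrac{\pi}{4})=\tfrac{1}{\sqrt2}(e_1+e_3)$, so that $s(m)=m$, $g_0(m)=(1,0)\in\mC\times\mR$ and $dh(m)\in\Phi_1^{1}(\ell^2)$. Using $h=p\circ g$, the fact that $dg_0$ is a submersion away from the origin, and $\ker dp_0(1,0)=\{0\}\times\mR$, a short computation gives
\[
\ker dh(m)=dg_0(m)^{-1}\bigl(\ker dp_0(1,0)\bigr)=\langle\, e_1-e_3,\ e_2+e_4\,\rangle,
\]
which is exactly the $(-1)$-eigenspace of $s$ on $\mR^{4}\subset\ell^2$. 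Consequently $s$ restricts to $-\mathrm{id}$ on the two-dimensional space $\ker dh(m)$ and acts trivially on $\coker dh(m)$, so $\Theta_{\pi/4}$ acts on $\det(dh(m))$ by $\det(-\mathrm{id}_{\mR^{2}})\cdot 1=+1$. Therefore $f>0$, so $\Theta_0$ identifies $L_0$ with $L_{\pi/2}$ compatibly with any orientation of $L$; and since $\Theta_0$ is given on kernels by the isomorphism $v\mapsto s^{-1}v=s(v)$ and $s(e_2)=-e_4$, it carries the ray $[-e_2]$ to $[e_4]$. Hence $[-e_2]$ and $[e_4]$ lie in one orientation of $L$, which proves the lemma.

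The main obstacle is the determinant-line bookkeeping at the crossing point $m$: the path $\gamma$ must meet $\Phi_1^{\mathrm{sing}}(\ell^2)$ — indeed $dh(u)$ is surjective precisely when $|u_1+iu_2|\neq|u_3+iu_4|$, and $S_0$ and $S_1$ sit in the two different components of that region — so one cannot simply transport an oriented kernel line across, and the whole proof rests on the fact that $\Theta$ at the singular point $m$ is a well-defined sign, evaluated correctly by means of the eigenspace identification above.
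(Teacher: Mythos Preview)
Your proof is correct and takes a genuinely different route from the paper. Both arguments use the same path $\gamma(\sigma)=(\cos\sigma,0,\sin\sigma,0,0,\dots)$ from $e_1$ to $e_3$, but the paper proceeds by an explicit stabilization: it writes out $F(t)=dh(\gamma(t))$ as a matrix, chooses $G(t)s=4e_3\sin 2t$ so that $F_G(t)$ is surjective, exhibits a concrete basis $u(t),v(t)$ of $\ker F_G(t)$, and then evaluates $u\wedge v$ at the endpoints to read off that the standard orientations match. Your argument bypasses all of this by exploiting the involution $s$ (already introduced in the proof of Lemma~\ref{sigmalemma}): the symmetry $dh(\gamma(\sigma))\circ s=dh(\gamma(\rho(\sigma)))$ lets you transport the whole question to a single $\pm 1$ on the midpoint fibre, and the pleasant coincidence that $\ker dh(m)$ is exactly the $(-1)$-eigenspace of $s$ on $\mR^4$ makes $\Theta_{\pi/4}=\det(-\mathrm{id}_{\mR^2})=+1$ immediate.

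What each approach buys: the paper's computation is completely self-contained once one accepts the stabilization description of the determinant bundle, but involves finding and checking a global section by hand. Your argument is shorter and more conceptual, and it makes transparent \emph{why} the orientations are coherent (the symmetry forces it); on the other hand it silently uses that right composition by a fixed isomorphism of $\ell^2$ is covered by a continuous bundle map on $\det\to\Phi_1(\ell^2)$, which is a standard functoriality property of the determinant bundle but is not otherwise stated in the paper. One small cosmetic point: your description ``$dh(u)$ is surjective precisely when $|u_1+iu_2|\neq|u_3+iu_4|$'' should exclude the origin in the first four coordinates, but this is irrelevant for the path $\gamma$.
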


\begin{proof}
The path
\[
\alpha: \left[ 0, {\textstyle \frac{\pi}{2} }\right] \rightarrow \ell^2, \qquad t \mapsto (\cos t, 0, \sin t, 0 , 0, \dots)
\]
connects $e_1\in S_0$ to $e_3\in S_1$. The composition $F := dh\circ \alpha$ is the path of Fredholm operators index one
\[
F(t) \left( \begin{array}{c} v_1 \\ v_2 \\ v_3 \\ v_4 \\ \vdots \end{array} \right) =  \left( \begin{array}{c} 2 (v_1 \sin t + v_3 \cos t ) \\ 2 (v_2 \sin t - v_4 \cos t ) \\ 4 ( v_1 \cos t - v_3 \sin t) \cos 2t  \\ v_5 \\ \vdots \end{array} \right).
\]
The kernel of $F(t)$ has dimension one for every $t\in [0,\frac{\pi}{2}]\setminus \{\frac{\pi}{4}\}$ and dimension two for $t=\frac{\pi}{4}$. Therefore, $F(t)$ fails to be surjective for $t=\frac{\pi}{4}$.
In order to exhibit a non-vanishing section of the line bundle
\[
F^* \det \rightarrow  \left[ 0, {\textstyle\frac{\pi}{2} }\right],
\]
it is convenient to stabilize $F$: We choose a path of linear mappings
\[
G(t) : \mR^k \rightarrow \ell^2, \qquad t\in  \left[ 0, {\textstyle \frac{\pi}{2} }\right],
\]
such that the operators
\[
F_G(t): \mR^k \oplus \ell^2 \rightarrow \ell^2, \qquad (s,u) \mapsto G(t) s + F(t)u,
\]
are surjective. This surjectivity implies that the path
\[
t\mapsto \ker F_G(t)
\]
is continuous into the Grassmannian of $(k+1)$-planes in $\mR^k \oplus \ell^2$. Any continuous section of $\Lambda^{k+1}( \ker F_G)$ induces a continuous section of $F^*\det$ thanks to the canonical isomorphism
\[
\Lambda^{\max}(\ker F(t)) \otimes \Lambda^{\max}(\coker F(t))^* \cong \Lambda^{\max} ( \ker F_G(t))
\]
that is induced by the exact sequence
\[
0 \rightarrow \ker F(t) \rightarrow \ker F_G(t) \rightarrow \mR^k \rightarrow \coker F(t) \rightarrow 0,
\]
where the first map is the inclusion, the second one is the restriction of the projection $\mR^k \oplus \ell^2\rightarrow \mR^k$ and the third one is the composition of $G(t)$ by the quotient projection
\[
\ell^2 \rightarrow \frac{\ell^2}{\mathrm{im}\, F(t) } = \coker F(t).
\]
See \cite[Appendix]{fh93} for more details on this form of stabilization and its use in the definition of the vector bundle structure of the determinant bundle. In our case, a convenient stabilization is obtained by choosing $k=1$ and 
\[
G(t) : \mR \rightarrow \ell^2, \qquad G(t)s := 4 e_3 \sin 2t.
\]
The surjectivity of $F_G(t)$ follows from the fact that the matrix
\[
\left( \begin{array}{ccccc} 0 & \sin t & 0 & \cos t & 0 \\ 0 & 0 & \sin t & 0 & -\cos t \\ 2 \sin 2t & 2\cos t \cos 2t & 0 & - 2 \sin t \cos 2t& 0 \end{array} \right)
\]
has rank three for every $t\in [0,\frac{\pi}{2}]$. The vectors
\[
\begin{split}
u(t) & := - e_0 \cos 2t + (e_1 \cos t - e_3 \sin t) \sin 2t, \\ 
v(t) & := e_2 \cos t + e_4 \sin t,
\end{split}
\]
where $e_0 := (1,0) \in \mR\times \ell^2$, depend continuously on $t$ and form a basis of $\ker F_G(t)$, for every $t\in [0,\frac{\pi}{2}]$. Therefore, the path
\[
t\mapsto u(t)\wedge v(t)
\]
is a non-vanishing section of $\Lambda^2(\ker F_G)$. Its value for $t=0$ is
\[
u(0) \wedge v(0) = - e_0 \wedge e_2 = e_0 \wedge (-e_2),
\]
whereas its value for $t=\frac{\pi}{2}$ is
\[
u\left({\textstyle \frac{\pi}{2}}\right) \wedge v\left({\textstyle \frac{\pi}{2}}\right) = e_0 \wedge e_4.
\]
The above formulas imply that the orientations of $S_0$ and $S_1$ that are defined by declaring the vectors 
\[
-e_2 \in T_{e_1} S_0 \qquad \mbox{and} \qquad e_4 \in T_{e_3} S_1
\]
to be positively oriented are $dh$-coherent. But these are precisely the standard orientations of $S_0$ and $S_1$, see (\ref{posor}).
\end{proof}

\section{Reduction to the models}
\label{redsec}

In this section, we will show how one-dimensional framed submanifolds of $\ell^2$ satisfying suitable assumptions can be reduced to the three models that we introduced in the previous section. In the following lemma, $S_0$ denotes the circle
\[
S_0 = \{ (u_1,u_2,0,0,\dots) \in \ell^2 \mid u_1^2+ u_2^2 = 1\}
\]
that is framed by the maps $df: \ell^2 \rightarrow \Phi_1(\ell^2)$ and $dg: \ell^2 \rightarrow \Phi_1(\ell^2)$ introduced in Examples \ref{exf} and \ref{exg}. This circle is the boundary of the embedded disk
\[
D_0 = \{(u_1,u_2,0,0,\dots) \in \ell^2 \mid u_1^2+u_2^2\leq 1\},
\]
and we recall that
\[
\begin{split}
\eta(df|_{D_0}, \Phi_1^{\mathrm{sing}}(\ell^2)) &= \sigma(df,S_0) = 1, \\
\eta(dg|_{D_0}, \Phi_1^{\mathrm{sing}}(\ell^2)) &= \sigma(dg,S_0) = 0.
\end{split}
\]
We denote by $B_r$ the open ball of radius $r$ centered at the origin in $\ell^2$ and by $B_r^c:= \ell^2 \setminus B_r$ its complement. 

\begin{lemma}
\label{model1}
Let $A: \ell^2 \rightarrow \Phi_1(\ell^2)$ be a framing of $S_0$ in $\ell^2$. Then  there exists a homotopy
\[
H:[0,1] \times \ell^2 \rightarrow \Phi_1(\ell^2)
\]
such that:
\begin{enumerate}[(i)]
\item $H(0,u)=A(u)$ for every $u\in \ell^2$;
\item $H(t,u)=A(u)$ for every $(t,u)\in [0,1]\times B_2^c$;
\item $\ker H(t,u)=T_u S_0$ for every $(t,u)\in [0,1]\times S_0$;
\item if $\sigma(S_0,A)=1$ then $H(1,u) = df(u)$ for every $u\in D_0$, whereas if $\sigma(S_0,A)=0$ then $H(1,u) = dg(u)$ for every $u\in D_0$.
\end{enumerate}
\end{lemma}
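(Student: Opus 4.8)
The plan is to reduce the lemma, via the homotopy extension property, to two facts that are already available: that any two framings of $S_0$ are homotopic \emph{through} framings of $S_0$ (because the fibres of $p:\Phi_1^0(\mH)\to\mathrm{Gr}_1(\mH)$ are contractible), and that, once the boundary values are fixed, $\sigma(S_0,A)\in\pi_2(\Phi_1(\mH),\Phi_1^0(\mH))=\mZ_2$ is the only obstruction to deforming $A|_{D_0}$ rel boundary onto a prescribed model (Remark~\ref{hom-rel-bdry}). Accordingly, set $A':=df$ if $\sigma(S_0,A)=1$ and $A':=dg$ if $\sigma(S_0,A)=0$; by Examples~\ref{exf} and~\ref{exg} one has $\sigma(S_0,A')=\sigma(S_0,A)$ in both cases, and $A'$ is a framing of $S_0$. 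Since $D_0\subset B_1\subset B_2$, it suffices to construct a homotopy satisfying (i)--(iii) that ends at a map coinciding with $A'$ on $D_0$.

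\textbf{Step 1 (normalising the framing along $S_0$).} On $S_0$ both $A$ and $A'$ are lifts of the Gauss map $u\mapsto T_uS_0$ along $p$, whose fibre is $\cong\mathrm{GL}(\mH)$ and hence contractible (Section~\ref{topsec}); equivalently they are sections of a fibre bundle over $S^1$ with contractible fibre, hence lie in the same path-component of the section space. So there is a homotopy $a:[0,1]\times S_0\to\Phi_1^0(\mH)$ with $a(0,\cdot)=A|_{S_0}$, $a(1,\cdot)=A'|_{S_0}$ and $\ker a(t,u)=T_uS_0$ throughout. Extend $a$ to $[0,1]\times(S_0\cup B_2^c)$ by the constant homotopy $A$ on $B_2^c$; since $S_0\cup B_2^c$ and $\ell^2$ are absolute neighbourhood retracts and the former is closed in the latter, the inclusion is a cofibration, so the homotopy extension property yields $H^{(1)}:[0,1]\times\ell^2\to\Phi_1(\mH)$ extending this data with $H^{(1)}(0,\cdot)=A$. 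Then $H^{(1)}$ satisfies (i)--(iii) — for (iii) note $H^{(1)}(t,\cdot)|_{S_0}=a(t,\cdot)$ still frames $S_0$ — and $\sigma(S_0,\cdot)$ is unchanged along it, because composing a homotopy of framings of $S_0$ with a fixed capping disk is a homotopy of maps of pairs $(\mD,\partial\mD)\to(\Phi_1(\mH),\Phi_1^0(\mH))$. Replacing $A$ by $H^{(1)}(1,\cdot)$, we may henceforth assume $A|_{S_0}=A'|_{S_0}$.

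\textbf{Step 2 (normalising over $D_0$).} Identifying $(D_0,S_0)$ with $(\mD,\partial\mD)$, the maps $A|_{D_0}$ and $A'|_{D_0}$ now agree on the boundary and satisfy $\eta(A|_{D_0},\Phi_1^{\mathrm{sing}}(\mH))=\sigma(S_0,A)=\sigma(S_0,A')=\eta(A'|_{D_0},\Phi_1^{\mathrm{sing}}(\mH))$. By Remark~\ref{hom-rel-bdry} there is a homotopy $b:[0,1]\times D_0\to\Phi_1(\mH)$ from $A|_{D_0}$ to $A'|_{D_0}$ with $b(t,\cdot)|_{S_0}=A|_{S_0}$ for all $t$. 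Extend $b$ by the constant homotopy $A$ on $B_2^c$ (compatible, as $D_0$ and $B_2^c$ are disjoint), and again invoke the homotopy extension property for the closed absolute neighbourhood retract $D_0\cup B_2^c\subset\ell^2$ to get $H^{(2)}:[0,1]\times\ell^2\to\Phi_1(\mH)$ with $H^{(2)}(0,\cdot)=A$, equal to $A$ on $B_2^c$, restricting to $b$ on $[0,1]\times D_0$ and to $A|_{S_0}$ on $[0,1]\times S_0$. This gives (i), (ii), (iii), and (iv) since $H^{(2)}(1,\cdot)|_{D_0}=A'|_{D_0}$. Concatenating $H^{(1)}$ and $H^{(2)}$ after rescaling the time parameter yields the desired $H$.

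\textbf{Expected main difficulty.} The extension steps are where care is needed, but they are controlled by the homotopy extension property once one verifies that $S_0\cup B_2^c$ and $D_0\cup B_2^c$ are closed ANR subsets of $\ell^2$. The real conceptual point is that condition (iii) must be preserved during the whole deformation: this rules out moving $A|_{D_0}$ by an arbitrary homotopy of maps of pairs and forces the two-stage scheme above — first rectify the behaviour on $S_0$ through framings of $S_0$, then rectify the interior rel boundary — with $\sigma(S_0,A)\in\mZ_2$ being exactly the obstruction that selects between the models $df$ and $dg$.
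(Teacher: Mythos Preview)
Your proof is correct and follows the same two-step scheme as the paper: first normalise the framing along $S_0$, then normalise over $D_0$ rel boundary using Remark~\ref{hom-rel-bdry}, extending each step to $\ell^2$ while fixing $B_2^c$, and concatenate. The only notable difference is in the execution of Step~1: the paper first takes an arbitrary homotopy in $\Phi_1^0(\ell^2)$ between $A|_{S_0}$ and the model (using that both loops are null-homotopic there since $\partial_1=0$) and then post-composes with a path of orthogonal operators $U(t,u)$ to force the kernel to stay equal to $T_uS_0$, whereas you argue directly that both maps are sections of the pullback of $p:\Phi_1^0(\mH)\to\mathrm{Gr}_1(\mH)$ along the Gauss map, a bundle over $S^1$ with contractible fibre, so they are homotopic through sections; your argument is a bit more conceptual, the paper's more hands-on, but they encode the same fact. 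For the extensions you invoke the homotopy extension property for closed ANR pairs while the paper writes down explicit retractions of $[0,1]\times\ell^2$ onto the relevant subsets; both are fine.
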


\begin{proof}
We define 
\[
F: D_0 \rightarrow \Phi_1(\ell^2)
\]
to be the restriction of  either $df$, if $\sigma(S_0,A)=1$, or $dg$, if $\sigma(S_0,A)=0$. Therefore,
\begin{equation}
\label{therightone}
\eta(F|_{D_0},\Phi_1^{\mathrm{sing}}(\ell^2)) = \eta(A|_{D_0},\Phi_1^{\mathrm{sing}}(\ell^2)).
\end{equation}
The loops $A|_{S_0}$ and $F|_{S_0}$ are contractible in $\Phi_1(\ell^2)$ and, since the homomorphism $\partial_1$ in the exact sequence (\ref{exact}) vanishes, also in $\Phi_1^0(\ell^2)$. Therefore, there exists a homotopy
\[
H_0:[0,1]\times S_0 \rightarrow \Phi_1^0(\ell^2)
\]
from $A|_{S_0}$ to $F|_{S_0}$. Denote by $\mathrm{O}(\ell^2)$ the orthogonal group of $\ell^2$ and choose a map
\[
U: [0,1] \times S_0 \rightarrow \mathrm{O}(\ell^2)
\]
such that
\[
\begin{split}
U(0,u) &= I \qquad \forall u\in S_0, \\ U(t,u) T_u S_0 &= \ker H_0(t,u) \qquad \forall (t,u)\in [0,1]\times S_0. 
\end{split}
\]
Since $\ker H_0(1,u)=\ker F(u) = T_u S_0$ for every $u\in S_0$ and $U$ is orthogonal, for every $u\in S_0$ the operator $U(1,u)$ maps the orthogonal complement of $T_u S_0$ onto itself. Since this orthogonal complement is infinite dimensional, the fact that the orthogonal group of an infinite dimensional Hilbert space is contractible implies that we can further assume that $U(1,u)$ is the identity on the orthogonal complement of $T_u S_0$, for every $u\in S_0$. The homotopy
\[
H_1:[0,1]\times S_0 \rightarrow \Phi_1(\ell^2), \qquad H_1(t,u) := H_0(t,u) U(t,u),
\]
satisfies
 \[
\begin{split}
H_1(0,u) &= A(u)  \qquad \forall u\in S_0, \\ \ker H_1(t,u) &= T_u S_0\qquad \forall (t,u)\in [0,1]\times S_0, \\  H_1(1,u) &= F(u) \qquad \forall u\in S_0.
\end{split}
\]
Indeed, the last identity follows from the fact that the operators on both sides of the equality have the same kernel $T_u S_0$ and agree on its orthogonal complement.
Extend the map $H_1$ to the set
\begin{equation}
\label{largeset}
(\{0\} \times \ell^2) \cup ([0,1]\times B_2^c) \cup ([0,1]\times S_0)
\end{equation}
by setting $H_1(t,u)=A(u)$ on the union of the first two sets. Since the set (\ref{largeset}) is a retract of $[0,1]\times \ell^2$, we can extend $H_1$ to a homotopy
\[
H_1 : [0,1] \times \ell^2 \rightarrow \Phi_1(\ell^2).
\]
This homotopy satisfies (i), (ii) and (iii), whereas the identity appearing in (iv) is for now guaranteed only on $S_0$. We will achieve the identity on the whole $D_0$ by a further homotopy.

Set $A'(u):= H_1(1,u)$. Since $A|_{D_0}$ and $A'|_{D_0}$ are homotopic through a homotopy mapping $S_0$ into $\Phi_1^0(\ell^2)$, we have
\[
\eta(A'|_{D_0}, \Phi_1^{\mathrm{sing}}(\ell^2)) = \eta(A|_{D_0}, \Phi_1^{\mathrm{sing}}(\ell^2)).
\]
Using also the fact that $A'$ and $F$ coincide on $S_0$ and the identity (\ref{therightone}), Lemma \ref{intnumb} together with Remark \ref{hom-rel-bdry} implies the existence of a homotopy
\[
H_2 : [0,1] \times D_0 \rightarrow \Phi_1(\ell^2)
\]
such that
\[
\begin{split}
H_2(0,u) &= A'(u) \qquad \forall u\in D_0, \\ H_2(1,u) &= F(u) \qquad \forall u\in D_0, \\ H_2(t,u) = A'(u) &= F(u) \qquad \forall (t,u)\in [0,1] \times S_0.
\end{split}
\]
We now extend $H_2$ to a homotopy
\[
H_2 : [0,1] \times \ell^2 \rightarrow \Phi_1(\ell^2)
\]
satisfying
\[
\begin{split}
H_2(0,u)&= A'(u) \qquad \forall u\in \ell^2, \\
H_2(t,u) &= A'(u) \qquad \forall (t,u)\in [0,1]\times B_2^c,
\end{split}
\]
using that fact that
\[
(\{0\} \times \ell^2) \cup ([0,1]\times B_2^c) \times ([0,1]\times D_0)
\]
is a retract of $[0,1]\times \ell^2$. The concatenation of the homotopies $H_1$ and $H_2$ gives us a homotopy $H$ with the desired properties.
\end{proof}

In the next lemma, we consider certain framings of $S_0 \cup S_1$, where as before
\[
S_0 = \{ (u_1,u_2,0,0,\dots) \in \ell^2 \mid u_1^2+ u_2^2 = 1\}, \qquad S_1 = \{ (0,0,u_3,u_4,0,\dots)\in \ell^2 \mid  u_3^2+u_4^2=1\}.
\]
Recall that that $S_0 \cup S_1$ is framed by the map $dh: \ell^2 \rightarrow \Phi_1(\ell^2)$ from Example \ref{exh} and
\begin{equation}
\label{repeat}
\sigma(S_0,dh) = \sigma(S_1,dh) = 0,
\end{equation}
by Lemma \ref{sigmalemma}, and that the standard orientations of $S_0$ and $S_1$ are $dh$-coherent, see Lemma \ref{orlemma}.
Moreover, the union of these two circles is the boundary of the annulus $\Sigma$ that is defined in (\ref{Sigmadef}).
\begin{lemma}
\label{model2}
Let $A: \ell^2 \rightarrow \Phi_1(\ell^2)$ be a framing of $S_0 \cup S_1$ such that
\begin{equation}
\label{ass}
\sigma(A,S_0) = \sigma(A,S_1) = 0,
\end{equation}
and such that the standard orientations of $S_0$ and $S_1$ are $A$-coherent. Then there exists a homotopy
\[
H: [0,1]\times \ell^2 \rightarrow \Phi_1(\ell^2)
\]
such that:
\begin{enumerate}[(i)]
\item $H(0,u) = A(u)$ for every $u\in \ell^2$;
\item $H(t,u) = A(u)$ for every $(t,u) \in [0,1] \times  B^c_2$;
\item $\ker H(t,u) = T_u (S_0 \cup S_1)$ for every $(t,u) \in [0,1] \times (S_0 \cup S_1)$;
\item $H(1,u) = dh(u)$ for every $u\in \Sigma$.
\end{enumerate}
\end{lemma}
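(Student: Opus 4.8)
The plan is to construct $H$ as a concatenation $H^{(1)}*H^{(2)}*H^{(3)}$ of three homotopies, each stationary on $B_2^c$ and each satisfying $\ker H^{(i)}(t,u)=T_u(S_0\cup S_1)$ for $u\in S_0\cup S_1$, so that conditions (i)--(iii) hold automatically and condition (iv) is approached in stages. The first and third stages parallel the two stages of the proof of Lemma~\ref{model1}, while the second is the genuinely new ingredient and is where the hypothesis on the orientations is used. For the first stage, note that $S_0$ and $S_1$ bound disks in $\ell^2$, so the loops $A|_{S_i}$ and $dh|_{S_i}$ are null-homotopic in $\Phi_1(\ell^2)$, hence --- the map $\pi_1(i)$ in (\ref{exact}) being an isomorphism --- also in $\Phi_1^0(\ell^2)$; running the first part of the proof of Lemma~\ref{model1} on each circle, composing homotopies between these loops with paths in the contractible group $\mathrm{O}(\ell^2)$ and extending over $\ell^2$ via a retraction, yields $H^{(1)}$ satisfying (i)--(iii) with $A^{(1)}:=H^{(1)}(1,\cdot)$ equal to $dh$ on $S_0\cup S_1$, in fact (by a further homotopy deformation-retracting a collar onto the circles) on a neighbourhood of $S_0\cup S_1$. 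Since $A^{(1)}$ is homotopic to $A$ through framings of $S_0\cup S_1$, we still have $\sigma(A^{(1)},S_i)=0$ by (\ref{ass}) and the standard orientations of $S_0,S_1$ remain $A^{(1)}$-coherent.

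For the second stage, fix an embedded arc $\alpha\colon[0,1]\to\Sigma$ from a point of $S_0$ to a point of $S_1$. As $A^{(1)}$ and $dh$ agree near $\partial\Sigma$, the paths $A^{(1)}\circ\alpha$ and $dh\circ\alpha$ in $\Phi_1(\ell^2)$ have the same endpoints, and I want to homotope $A^{(1)}$ rel $\partial\Sigma$ so that they coincide --- indeed so that $A^{(1)}=dh$ on a neighbourhood of $\alpha$. This is possible precisely when the loop obtained by running along $A^{(1)}\circ\alpha$ and back along $dh\circ\alpha$ is null-homotopic in $\Phi_1(\ell^2)$, i.e.\ when its class in $\pi_1(\Phi_1(\ell^2))=\mZ_2$, which is detected by whether it pulls the determinant bundle back to the trivial bundle over $S^1$, vanishes. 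The line bundle $(A^{(1)}\circ\alpha)^*\det$ over $[0,1]$ has fibres $\Lambda^1(T_{\alpha(0)}S_0)$ and $\Lambda^1(T_{\alpha(1)}S_1)$ at its endpoints, and by the very definition of $A^{(1)}$-coherence it carries an orientation restricting to the standard orientations of $S_0$ and $S_1$; by Lemma~\ref{orlemma} the same holds for $(dh\circ\alpha)^*\det$. Hence the two bundles identify the standard orientations across $\alpha$ in the same way, the comparison loop pulls $\det$ back to the trivial bundle, and $A^{(1)}\circ\alpha\simeq dh\circ\alpha$ rel endpoints. Prescribing the corresponding homotopy of $A^{(1)}$ on a tubular neighbourhood of $\alpha$, keeping it stationary on $\{0\}\times\ell^2$ and on $[0,1]\times(B_2^c\cup(\text{neighbourhood of }\partial\Sigma))$, and extending over $\ell^2$ via a retraction, gives $H^{(2)}$ satisfying (i)--(iii), stationary near $\partial\Sigma$, with $A^{(2)}:=H^{(2)}(1,\cdot)$ equal to $dh$ on a neighbourhood of $\partial\Sigma\cup\alpha$. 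I expect this stage --- and in particular the identification of the coherence hypothesis with the vanishing of the above $\pi_1(\Phi_1(\ell^2))$-obstruction --- to be the main obstacle; by the remark following Lemma~\ref{annulus} this obstruction is genuinely present, so the coherence hypothesis cannot be dispensed with.

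For the third stage, cut the annulus $\Sigma$ along $\alpha$ to obtain a disk $D$ together with a map $q\colon D\to\Sigma$ that is a homeomorphism off $\alpha$ and folds $\partial D$ onto $\partial\Sigma\cup\alpha$; since $A^{(2)}=dh$ near $\partial\Sigma\cup\alpha$, the maps $A^{(2)}\circ q$ and $dh\circ q$ from $D$ to $\Phi_1(\ell^2)$ agree near $\partial D$. By the argument of Remark~\ref{hom-rel-bdry} (gluing these maps along $\partial D$ into a map $S^2\to\Phi_1(\ell^2)$ and invoking Lemma~\ref{intnumb}), they are homotopic rel $\partial D$ provided their intersection numbers with $\Phi_1^{\mathrm{sing}}(\ell^2)$ coincide; a bookkeeping of intersection numbers --- using that $q$ has degree one and that the boundary contributions coming from $\alpha$, which is forced to meet the locus where $dh\notin\Phi_1^0(\ell^2)$, can be made to cancel in pairs by a suitable choice of $\alpha$ --- reduces this to the equality $\eta(A|_\Sigma,\Phi_1^{\mathrm{sing}})=\eta(dh|_\Sigma,\Phi_1^{\mathrm{sing}})$. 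Both sides vanish: capping $\Sigma$ with disks bounding $S_0$ and $S_1$ produces a map of the form $A\circ\iota$ with $\iota\colon S^2\to\ell^2$, which is null-homotopic since $\ell^2$ is contractible, and whose intersection number with $\Phi_1^{\mathrm{sing}}(\ell^2)$ equals $\eta(A|_\Sigma,\Phi_1^{\mathrm{sing}})+\sigma(A,S_0)+\sigma(A,S_1)$; hence $\eta(A|_\Sigma,\Phi_1^{\mathrm{sing}})=\sigma(A,S_0)+\sigma(A,S_1)=0$ by (\ref{ass}), and the identical computation with (\ref{repeat}) gives $\eta(dh|_\Sigma,\Phi_1^{\mathrm{sing}})=0$. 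The resulting homotopy rel $\partial D$, being constant over the part of $\partial D$ lying above $\alpha$, descends through $q$ to a homotopy from $A^{(2)}|_\Sigma$ to $dh|_\Sigma$ rel $\partial\Sigma\cup\alpha$; extending it over $\ell^2$ stationary on $B_2^c$ produces $H^{(3)}$, and $H:=H^{(1)}*H^{(2)}*H^{(3)}$ satisfies (i)--(iv).
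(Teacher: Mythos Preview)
Your three–stage strategy is genuinely different from the paper's proof and is sound in outline, but the third stage is more delicate than you let on. The paper proceeds instead by first invoking Lemma~\ref{annulus} on the annulus to obtain a homotopy $H_0:[0,1]\times\Sigma\to\Phi_1(\ell^2)$ from $A|_\Sigma$ to $dh|_\Sigma$ with $H_0([0,1]\times\partial\Sigma)\subset\Phi_1^0(\ell^2)$, and only \emph{afterwards} uses the orientation hypothesis: it analyses the determinant bundle $H_0^*\det$ and shows (via Lemma~\ref{orlemma} and the $A$-coherence assumption) that the kernel bundles over the two boundary tori are simultaneously trivial or simultaneously nontrivial, which allows the construction of a single path $U:[0,1]\times\ell^2\to\mathrm{O}(\ell^2)$ with $U(0,\cdot)=I$, $U(1,\cdot)=\pm I$ on $\Sigma$, correcting the kernel so that $H:=UH_0U$ satisfies (iii) and (iv). Thus in the paper the orientation hypothesis enters as a compatibility condition for the kernel correction, whereas in your argument it enters as the vanishing of a $\pi_1(\Phi_1(\ell^2))$-obstruction along the arc $\alpha$; these are two different (but equivalent) manifestations of the same hypothesis.

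The weak point in your argument is the ``cancel in pairs'' step. A direct computation (see the formula for $F(t)$ in the proof of Lemma~\ref{orlemma} together with the circle symmetry of $h_0$) shows that the singular locus of $dh|_\Sigma$ is the \emph{entire} middle circle $\{s=\pi/4\}$, which separates $\Sigma$; so your arc $\alpha$ must cross it, $dh|_\Sigma$ is \emph{not} transverse to $\Phi_1^1(\ell^2)$, and the boundary of the cut disk does not map into $\Phi_1^0(\ell^2)$. Consequently the individual intersection numbers $\eta(A^{(2)}\circ q,\Phi_1^{\mathrm{sing}})$ and $\eta(dh\circ q,\Phi_1^{\mathrm{sing}})$ are not defined, and the reduction to $\eta(A|_\Sigma,\cdot)=\eta(dh|_\Sigma,\cdot)$ needs a genuine argument, not just bookkeeping. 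One clean fix is to first perturb $dh$ on the interior of $\Sigma$ (rel $\partial\Sigma$) to a map transverse to $\Phi_1^1(\ell^2)$, so that its singular locus becomes a finite even set which $\alpha$ can avoid; run your three stages with this perturbation; and undo it at the end by a short straight-line homotopy in $\Phi_1(\ell^2)$, constant on $\partial\Sigma$. Alternatively one can argue directly that the two preimages in $\partial D$ of each singular point on $\alpha$ carry the same local germ of $F'$ up to a diffeomorphism of the source, hence contribute equally mod $2$ after any transverse perturbation --- this is presumably what you mean by ``cancel in pairs'', but it deserves to be spelled out. The paper's use of Lemma~\ref{annulus} sidesteps this issue entirely by never cutting the annulus.
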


\begin{proof} 
By gluing two disks along the two boundary components of $\Sigma$, we obtain a two-sphere in $\ell^2$. The intersection numbers with $\Phi_1^{\mathrm{sing}}(\ell^2)$ of the restrictions to this sphere of both $A$ and $dh$ vanish because the sphere is contractible in $\ell^2$. Then the identity (\ref{repeat}) and the assumption (\ref{ass}) imply that
\[
\eta(dh|_{\Sigma},   \Phi_1^{\mathrm{sing}}(\ell^2)) = \eta(A|_{\Sigma}, \Phi_1^{\mathrm{sing}}(\ell^2)) = 0.
\]
By Lemma \ref{annulus} there exists a homotopy
\[
H_0: [0,1]\times \Sigma  \rightarrow \Phi_1(\ell^2)
\]
such that:
\begin{equation}
\label{H0}
\begin{split}
H_0(0,u) &= A(u) \qquad \forall u\in \Sigma, \\
H_0(1,u) &= dh(u) \qquad \forall u\in \Sigma, \\
H_0(t,u) & \in \Phi_1^0(\ell^2) \qquad \forall (t,u)\in [0,1] \times \partial \Sigma.
\end{split}
\end{equation}
The line bundle
\begin{equation}
\label{LB}
H_0^* \det \rightarrow [0,1]\times \Sigma
\end{equation}
is trivial because the inclusion of $\{0\}\times S_0$ into its basis is a homotopy equivalence and the restriction of (\ref{LB}) to $\{0\} \times S_0$ is the trivial line bundle
\[
\Lambda^1(\ker A|_{S_0}) = \Lambda^1(TS_0) \cong TS_0.
\]
Let $\xi$ be a non-vanishing section of (\ref{LB}) whose restriction to $\{0\} \times S_0$ gives us the standard orientation of $S_0$. Since we are assuming that the standard orientations of $S_0$ and $S_1$ are $A$-coherent, the restriction of $\xi$ to $\{0\} \times S_1$ gives us the standard orientation of $S_1$. Consider the orientations of $S_0$ and $S_1$ that are induced by the restrictions of $\xi$ to $\{1\} \times S_0$ and $\{1\}\times S_1$. Being $dh$-coherent, these orientations either both agree with the standard orientations or both disagree with them, thanks to Lemma \ref{orlemma}. The first case corresponds to the situation in which the line bundles
\[
\begin{split}
\ker H_0|_{[0,1]\times S_0} & \rightarrow [0,1]\times S_0, \\
\ker H_0|_{[0,1]\times S_1} & \rightarrow [0,1]\times S_1,
\end{split}
\]
which satisfy
\[
\begin{split}
\ker H_0|_{\{0\} \times S_0} &= TS_0 = \ker H_0|_{\{1\} \times S_0}, \\
\ker H_0|_{\{0\} \times S_1} &= TS_1 = \ker H_0|_{\{1\} \times S_1},
\end{split}
\]
induce trivial line bundles over $\mT \times S_0$ and $\mT \times S_1$, where $\mT$ denotes the quotient $[0,1]/\{0,1\}\cong S^1$. In this case, we set $\epsilon:= 1$. The second situation corresponds to the case in which the kernel of $H_0$ induces non-trivial line bundles over both $\mT \times S_0$ and $\mT \times S_1$. In this case, we set $\epsilon:= -1$. The relevant fact here is that Lemma \ref{orlemma} and the second assumption on $A$ exclude the possibility that one of these line bundles is trivial and the other one is not.

Then we can find a continuous family of isometries
\[
U(t,u) : T_u \partial \Sigma \rightarrow \ker H_0(t,u), \qquad (t,u)\in [0,1]\times \partial \Sigma,
\]
such that
\[
U(0,u) = \mathrm{id}_{T_u \partial \Sigma}, \qquad U(1,u) = \epsilon \, \mathrm{id}_{T_u \partial \Sigma} \qquad \forall u\in \partial \Sigma.
\]
We can extend these isometries to a map
\[
U : [0,1] \times \partial \Sigma \rightarrow \mathrm{O}(\ell^2)
\]
such that
\begin{equation}
\label{U1}
\begin{split}
U(t,u) T_u \partial \Sigma &= \ker H_0(t,u), \\
U(0,u) = I, \qquad U(1,u) &= \epsilon I \qquad \forall u\in \partial \Sigma.
\end{split}
\end{equation}
Using the fact that $\Sigma$ is contained in the closed unit ball and Kuiper's theorem stating that $\mathrm{O}(\ell^2)$ is contractible, we can extend $U$ to a map
\[
U : [0,1] \times \ell^2 \rightarrow \mathrm{O}(\ell^2)
\]
such that
\begin{equation}
\label{U2}
\begin{split}
U(0,u) &= I \qquad \forall u\in \ell^2, \\
U(1,u) &= \epsilon I\qquad  \forall u\in \Sigma, \\
U(t,u) &= I \qquad \forall (t,u)\in [0,1]\times B_2^c.
\end{split}
\end{equation}
By (\ref{H0}), (\ref{U1}) and (\ref{U2}), the homotopy
\[
H: [0,1]\times \ell^2 \rightarrow \Phi_1(\ell^2), \qquad H(t,u) := U(t,u) H_0(t,u) U(t,u),
\]
satisfies the required properties.
\end{proof}

\section{How to eliminate or match certain circles}
\label{blockssec}

In the next proposition we do not assume the framing $A$ to be spin (and the Hilbert manifold $M$ need not be simply connected).

\begin{proposition}
\label{kill1}
Let $(X,A)$ be a one-dimensional framed submanifold of the Hilbert manifold $M$. Assume that $\varphi: \mD \rightarrow M$ maps $\partial \mD$ homeomorphically onto a component $X_0$ of $X$ and
\[
\eta(A\circ \varphi, \Phi_1^{\mathrm{sing}}(\mH)) = 1.
\]
Then $(X,A)$ is framed cobordant to $(X\setminus X_0,A')$ for a suitable framing $A': M \rightarrow \Phi_1(\mH)$ of $X\setminus X_0$.
\end{proposition}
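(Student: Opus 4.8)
The plan is to pass to the standard model in $\ell^2$ by localizing around the capping disk, to bring the framing there into normal form with the help of Lemma~\ref{model1}, and then to splice in the explicit null-cobordism of $(S_0,df)$ coming from Example~\ref{exf}.

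\emph{Localization.} Since $\dim\mD=2$ and $M$ is infinite dimensional, a routine general-position argument lets one homotope $\varphi$ rel $\partial\mD$ to a smooth embedding that meets $X$ only along $X_0$ and meets $X_0$ exactly in $\varphi(\partial\mD)$; because $\varphi|_{\partial\mD}$ is untouched and $A(X_0)\subset\Phi_1^0(\mH)$, this leaves $\eta(A\circ\varphi,\Phi_1^{\mathrm{sing}}(\mH))$ unchanged. Slightly enlarging this embedded disk and taking a thin tubular neighborhood of it provides an open embedding $\Psi\colon V\to M$, with $V\subset\ell^2$ open and $\overline{B}_2\subset V$, such that $\Psi$ restricts to $\varphi$ on the flat disk $D_0$, $\Psi(S_0)=X_0$, and $\Omega:=\Psi(V)$ meets $X$ only along $X_0$.

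\emph{Normalizing the framing.} Pulling $A$ back through $\Psi$ gives a framing $\tilde A$ of $S_0$ in $V$, namely $\tilde A(v):=A(\Psi(v))\circ d\Psi(v)$ after the usual trivialization identifications; extend it to a framing of $S_0$ on all of $\ell^2$, unchanged on $V$. Because $\Psi|_{D_0}=\varphi$ and $d\Psi|_{D_0}$ takes values in the contractible group $\mathrm{GL}(\ell^2)$, contracting that factor to the identity (which keeps the restriction to $\partial D_0$ surjective) gives $\sigma(S_0,\tilde A)=\eta(\tilde A|_{D_0},\Phi_1^{\mathrm{sing}}(\ell^2))=\eta(A\circ\varphi,\Phi_1^{\mathrm{sing}}(\mH))=1$. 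Lemma~\ref{model1} then yields a homotopy $\tilde H\colon[0,1]\times\ell^2\to\Phi_1(\ell^2)$, constant outside $B_2$ and through framings of $S_0$, from $\tilde A$ to a map equal to $df$ on $D_0$. Transporting $\tilde H$ back by $\Psi$ over $\Omega$ and leaving $A$ alone elsewhere defines a homotopy $[0,1]\times M\to\Phi_1(\mH)$ from $A$ to a framing $A_1$ of $X$; it is a homotopy through framings of $X$ by Lemma~\ref{model1}(iii) and is continuous across $\partial\Omega$ because $\tilde H$ is constant on $B_2^c$. By Remark~\ref{trivial2}, $(X,A)$ is framed cobordant to $(X,A_1)$, and in the chart $\Psi$ the framing $A_1$ coincides with the push-forward of $df$ on $D_0$.

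\emph{Inserting the model cobordism.} The regular values $e_1$ and $-e_1$ of the map $f$ of Example~\ref{exf} have Pontryagin manifolds $(S_0,df)$ and $(\emptyset,df)$, so a path $\gamma$ of values from $e_1$ to $-e_1$ (constant near the endpoints) furnishes a framed cobordism $(W_0,B_0)$ in $[0,1]\times\ell^2$ from $(S_0,df)$ to the empty framed submanifold; here $W_0$ is a two-disk contained in $[0,1]\times D_0$ and $B_0=d(f-\gamma)$ differs from $S_L^*df$ only by a rank-one (hence compact) term vanishing near $t=0,1$. Transporting $(W_0,B_0)$ through $\Psi$ and adjoining the trivial cobordism $[0,1]\times(X\setminus X_0)$ framed by $S_L^*A_1$, one obtains a cobordism $W\subset[0,1]\times M$ from $X$ to $X\setminus X_0$: near the transported copy of $W_0$ (which sits in $[0,1]\times\Omega$) one frames it by the push-forward of $B_0$ — which near $t=0,1$ equals $S_L^*A_1$, since there $d(f-\gamma)=S_L^*df$ and $A_1$ is the push-forward of $df$ on $\Psi(D_0)$ — while far from it one frames it by $S_L^*A_1$, which equals $S_L^*A$ there. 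Composing this framed cobordism with the one from the previous step gives the required framed cobordism from $(X,A)$ to $(X\setminus X_0,A')$ with $A':=A_1$.

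\emph{Main obstacle.} The one delicate point is the last framing-matching: the two recipes for the framing of $W$ — the push-forward of $B_0$ near the transported $W_0$ and $S_L^*A_1$ away from it — must be interpolated over the collar $\Omega\setminus\Psi(D_0)$, which fibers over $D_0$ with fiber a punctured small ball, so the required interpolation is a fiberwise homotopy of maps into $\Phi_2(\mR\oplus\mH,\mH)$ parametrized by the infinite-dimensional sphere of normal directions; that this sphere is contractible is exactly what makes the interpolation unobstructed (no kernel condition intervenes here, since the transported $W_0$ lies over $D_0$). Everything else is standard general position and the contractibility of $\mathrm{GL}(\ell^2)$; the hypothesis $\eta(A\circ\varphi,\Phi_1^{\mathrm{sing}}(\mH))=1$ serves precisely to land, via Lemma~\ref{model1}, in the case whose normal form is $df$, whose circle is null-cobordant — unlike the normal form $dg$, for which $\tau$ shows the circle is not.
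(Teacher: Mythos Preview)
Your proof is correct and follows essentially the paper's strategy: localize near the capping disk, apply Lemma~\ref{model1} to put the framing into the $df$-normal form on the disk, and then splice in the explicit null-cobordism of $(S_0,df)$ coming from the map $f$ of Example~\ref{exf}. The only cosmetic differences are that the paper moves the embedded disk to $D_0$ by an ambient isotopy (invoking Lemma~\ref{functors}) rather than working through a chart $\Psi$, and it justifies the framing extension over the collar by a clean retract argument rather than your interpolation---both amounting to the same thing, since no kernel condition is imposed on that region.
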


\begin{proof}
Up to a small perturbation, we may assume that $\varphi$ is a smooth embedding. We identify some open subset  $M_0$ of $M$ 
with the model Hilbert space $\mH$, and the latter with $\ell^2$. Since the group of diffeomorphisms of $M$ that are isotopic to the identity acts transitively on embedded disks, thanks to Lemma \ref{functors} we may assume that the image of the embedding $\varphi$ is the disk $D_0\subset \ell^2$ that is defined in (\ref{defnD0}), and hence that $X_0$ is the circle $S_0=\partial D_0$. We can also assume that $M_0\cap (X\setminus S_0)=\emptyset$. Since the identification 
\[
\mD \rightarrow D_0, \qquad (x,y) \mapsto (x,y,0,0,\dots)
\]
is isotopic to either $\varphi$ or $z\mapsto \varphi(\bar{z})$ and the composition of $A$ with either of the latter two embeddings has intersection number one with $\Phi_1^{\mathrm{sing}}(\mH)$, we have
\[
\eta(A|_{D_0}, \Phi_1^{\mathrm{sing}}(\mH))=1.
\]
We now apply Lemma \ref{model1} to the restriction of $A$ to $M_0\cong \ell^2$, where
\[
\sigma(S_0,A|_{\ell^2}) = \eta(A|_{D_0}, \Phi_1^{\mathrm{sing}}(\mH))=1.
\]
The homotopy given by this lemma extends to the whole $M$ and defines a framing of the trivial cobordism $[0,1]\times X$ starting at $A$ and ending at some framing $\tilde{A}$ of $X$ such that 
\[
\tilde{A}|_{D_0} = df|_{D_0}.
\]
It suffices to show that $(X,\tilde{A})$ is framed cobordant to $(X\setminus S_0,A')$ for some framing $A'$ of $X\setminus S_0$.

Consider the smooth Fredholm map of index two
\[
q: [0,1] \times \ell^2 \rightarrow \ell^2, \quad (t,u) \mapsto f(u) - \chi(t) e_1 = \bigl( u_1^2+u_2^2 - \chi(t), u_3, u_4, \dots \bigr),
\]
where $\chi:[0,1]\rightarrow \mR$ is a smooth monotonically decreasing function such that
\[
\chi(t) = 1 \quad \forall t\in \left[ 0 , {\textstyle \frac{1}{4}} \right], \qquad \chi\left( {\textstyle \frac{1}{2}} \right) = 0, \qquad \chi' \left({\textstyle \frac{1}{2}} \right) <0.
\]
One readily checks that $0$ is a regular value of $q$ and hence the set
\[
W_0:= q^{-1}(0)
\]
is a two-dimensional submanifold of $[0,1]\times \ell^2$ (it is diffeomorphic to a disk). We have
\[
W_0\cap \left( \bigl[ 0 , {\textstyle \frac{1}{4}} \bigr] \times \ell^2\right) = \bigl[ 0, {\textstyle \frac{1}{4}} \bigr] \times S_0,
\]
and 
\[
W_0 \cap\left(   \bigl( {\textstyle \frac{1}{2}} , 1 \bigr] \times \ell^2 \right) = \emptyset,
\]
because $\chi(t)<0$ for every $t>\frac{1}{2}$. Therefore, the submanifold 
\[
W:= \bigl( [0,1]\times (X\setminus S_0) \bigr) \cup W_0
\]
is a cobordism from $X$ to $X\setminus S_0$. There remains to construct a framing of this cobordism starting at $\tilde{A}$. Since $0$ is a regular value of $f$ and $W_0=q^{-1}(0)$, we have
\[
\ker dq(t,u) = T_{(t,u)} W_0 \qquad \forall (t,u)\in W_0.
\]
For $t\in [0,\frac{1}{4}]$ we have
\[
dq(t,u) [s,v] = ( 2(u_1 v_1 + u_2 v_2), v_3, v_4,\dots) \qquad \forall (s,v)\in \mR \times \ell^2,
\]
and hence 
\[
dq(t,u) = S_L^* \tilde{A}(u) \qquad \forall (t,u) \in \left[ 0 , {\textstyle \frac{1}{4} } \right] \times D_0.
\]
Therefore, the map 
\[
B: \left(  \bigl[ 0 , {\textstyle \frac{1}{4} }\bigr] \times M \right) \cup \left( \bigl[0,{\textstyle \frac{2}{3}}\bigr] \times (M\setminus M_0) \right) \cup  \left(  \bigl[ 0 , {\textstyle \frac{2}{3} } \bigr] \times D_0 \right) \rightarrow \Phi_2(\mR\oplus \mH,\mH)
\]
given by 
\[
B(t,u) := \left\{ \begin{array}{ll} S^*_L \tilde{A}(u) & \mbox{on }  \left(  \bigl[ 0 , \frac{1}{4} \bigr] \times M \right) \cup \left( \bigl[0,\frac{2}{3}\bigr] \times (M\setminus M_0) \right), \\ dq(t,u) & \mbox{on }  \bigl[ 0 , \frac{2}{3} \bigr] \times D_0, \end{array} \right.
\]
is well defined and continuous. Since its domain is a retract of $[0,\frac{2}{3}]\times M$, it extends to a map
\[
B: \bigl[ 0 , {\textstyle \frac{2}{3}} \bigr] \times M \rightarrow \Phi_2(\mR\times \mH,\mH).
\]
We now set
\[
A'(x) := S^*_R B \bigl( {\textstyle \frac{2}{3}} , x \bigr) \qquad \forall x\in M,
\]
and notice that
\[
B \bigl( {\textstyle \frac{2}{3}},x \bigr) = S_L^* A'(x) \qquad \forall x\in M\setminus M_0,
\]
and in particular on $X\setminus S$. We now extend $B$ to $[0,1]\times M$ by setting
\[
B(t,x) = \vartheta(t) B \bigl( {\textstyle \frac{2}{3}}, x\bigr) + (1-\vartheta(t)) S_L^* A'(x),
\]
where $\vartheta: [\frac{2}{3},1] \rightarrow [0,1]$ is a continuous function such that $\vartheta(\frac{2}{3})=1$ and $\vartheta(t)=0$ for every $t\in [\frac{3}{4},1]$. The map $B$ is easily seen to be a framing of the cobordism $W$ starting at $\tilde{A}$. We conclude that $(X,A)$ and $(X\setminus S, A')$ are framed cobordant.
\end{proof}

The second result of this section allows us to eliminate certain pairs of circles.

\begin{proposition}
\label{kill2}
Let $(X,A)$ be a one-dimensional framed submanifold of the simply connected Hilbert manifold $M$ with $A$ spin. Let $X_0\cong S^1$ and $X_1\cong S^1$ be distinct components of $X$ with
\[
\sigma(X_0,A) = \sigma(X_1,A) = 0.
\]
Then $(X,A)$ is framed cobordant to $(X\setminus (X_0 \cup X_1), A')$ for a suitable framing $A': M \rightarrow \Phi_1(\mH)$ of $X\setminus (X_0\cup X_1)$.
\end{proposition}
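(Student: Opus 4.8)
The plan is to bring the pair $X_0\cup X_1$, together with a surface connecting them, into the standard position of Example~\ref{exh}, to straighten the framing there by means of Lemma~\ref{model2}, and finally to glue in the explicit framed cobordism $(dk,k^{-1}(0))$ produced in that example.

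\emph{Step 1 (standard position).} Since $M$ is connected and infinite dimensional, the group of diffeomorphisms isotopic to the identity acts transitively on embedded annuli; because $M$ is simply connected, $X_0$ and $X_1$ cobound an embedded annulus (the trace of an isotopy between the two null-homotopic circles), and after a suitable such diffeomorphism, followed by one supported away from a chart $M_0\cong\ell^2$ that pushes $X\setminus(X_0\cup X_1)$ out of $M_0$, I may assume $X_0=S_0$, $X_1=S_1$ inside $M_0\cong\ell^2\subset M$ as in Examples~\ref{exg}--\ref{exh}, that they cobound the model annulus $\Sigma$ of~(\ref{Sigmadef}), and that $(X\setminus(S_0\cup S_1))\cap M_0=\emptyset$. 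By Lemma~\ref{functors} each such diffeomorphism replaces $(X,A)$ by a framed cobordant pair, so no generality is lost; keep calling the framing $A$.

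\emph{Step 2 (the crux: checking the hypotheses of Lemma~\ref{model2}).} I must verify, for $A|_{M_0}$, that $\sigma(S_0,A)=\sigma(S_1,A)=0$ and that the standard orientations of $S_0,S_1$ are $A$-coherent. The first is immediate, since $\sigma$ of each component is a framed cobordism invariant and Step~1 only changes things within the framed cobordism class. For the second I would cap $\Sigma$ with disks $\varphi_i\colon\mD\to M$ onto $S_i$, obtaining a map $\Psi\colon S^2\to M$; since $A$ is \emph{spin}, $[\Psi]\in\pi_2(M)$ is killed by $\pi_2(A)$, so $A\circ\Psi\colon S^2\to\Phi_1(\mH)$ is null-homotopic and the line bundle $(A\circ\Psi)^*\det\to S^2$ is trivial. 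A nonvanishing section of it induces orientations of $S_0,S_1$ (where $\coker A=0$, so $\det A=\Lambda^1 TS_i$) that are $A$-coherent via a core arc of $\Sigma$; and since $A|_{S_i}$ bounds the disk $A\circ\varphi_i$ in $\Phi_1(\mH)$, the bundle $(A|_\Sigma)^*\det\to\Sigma$ is trivial, which should let one compare it with $\Lambda^2 T\Sigma$ along a collar of $\partial\Sigma$ and conclude that the standard (boundary) orientations of $S_0,S_1$ are themselves $A$-coherent. This deduction of $A$-coherence from the spin hypothesis is the heart of the argument and the point I expect to need the most care; it is also the only place where "spin'' enters the reduction.

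\emph{Steps 3--4 (straightening and gluing).} Lemma~\ref{model2} then yields a homotopy from $A|_{M_0}$ to a framing equal to $dh$ on $\Sigma$, fixing the kernels along $S_0\cup S_1$ and constant on $B_2^c$; extending it by $A$ outside $M_0$ and using Remark~\ref{trivial2}, I obtain that $(X,A)$ is framed cobordant to $(X,\tilde A)$, where $\tilde A=dh$ on $\Sigma$ and $\tilde A=A$ outside $M_0$. Finally I would glue the model framed cobordism $(dk,k^{-1}(0))$ of Example~\ref{exh}, which runs from $(S_0\cup S_1,dh)$ to $(\emptyset,dh)$, with $k^{-1}(0)\subset[0,\tfrac12]\times\ell^2$ projecting to $\Sigma$ and $dk=S_L^*dh$ for $t\in[0,\tfrac14]\cup[\tfrac34,1]$: setting $W:=\bigl([0,1]\times(X\setminus(S_0\cup S_1))\bigr)\cup k^{-1}(0)\subset[0,1]\times M$, a cobordism from $X$ to $X\setminus(S_0\cup S_1)$, I construct a framing $B$ of $W$ starting at $\tilde A$ by exactly the retraction-and-interpolation argument used at the end of the proof of Proposition~\ref{kill1} --- taking $B=S_L^*\tilde A$ on $[0,\tfrac14]\times M$ and on $[0,\tfrac23]\times(M\setminus M_0)$, $B=dk$ on $[0,\tfrac23]\times\Sigma$ (compatible because $dk(t,u)=S_L^*dh(u)=S_L^*\tilde A(u)$ for $u\in\Sigma$, $t\le\tfrac14$), extending over the retract $[0,\tfrac23]\times M$, setting $A':=S_R^*B(\tfrac23,\cdot)$ (which equals $\tilde A$ off $M_0$, in particular on $X\setminus(S_0\cup S_1)$), and interpolating for $t\in[\tfrac23,1]$. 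Chaining the framed cobordisms of Steps 1, 3 and 4 shows that $(X,A)$ is framed cobordant to $(X\setminus(X_0\cup X_1),A')$, as claimed.
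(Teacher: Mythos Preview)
Your overall architecture matches the paper's proof: move the two circles into the model position of Example~\ref{exh}, apply Lemma~\ref{model2} to straighten the framing on $\Sigma$, and then glue in the explicit cobordism $(k^{-1}(0),dk)$ by the same retraction-and-interpolation device used in Proposition~\ref{kill1}. Steps~1,~3 and~4 are fine and essentially identical to the paper's argument.

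The gap is in Step~2, where you try to \emph{deduce} that the standard orientations of $S_0$ and $S_1$ are $A$-coherent from the spin hypothesis. This does not work. The spin condition controls $\pi_2(A)$ and hence the well-definedness of $\sigma$; it has nothing to do with the $A$-coherence of a particular pair of orientations, which is governed by the determinant line bundle along a \emph{path} (a $\pi_1$ phenomenon). Concretely: take $A=dh$ and precompose with a diffeomorphism of $\ell^2$ that fixes $S_0$ and reflects $S_1$; the resulting framing is still spin, still satisfies $\sigma=0$ on both circles, yet the standard orientations are no longer coherent for it. Your attempted comparison of $(A|_\Sigma)^*\det$ with $\Lambda^2 T\Sigma$ breaks down because on the interior of $\Sigma$ the kernel of $A$ is unrelated to $T\Sigma$; the two line bundles have no canonical identification away from $\partial\Sigma$, so triviality of one says nothing about how the boundary orientations of $\Sigma$ sit inside the other.

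The fix is simpler than your attempt and is what the paper does: choose $A$-coherent orientations on $X_0$ and $X_1$ \emph{before} moving anything (this is always possible since $M$ is simply connected), and then in Step~1 use that the identity component of $\mathrm{Diff}(M)$ acts transitively on finite sets of \emph{oriented} embedded circles. Sending $(X_0,X_1)$ with their chosen orientations to $(S_0,S_1)$ with the standard orientations then makes the standard orientations $A$-coherent for the pushed-forward framing by construction, and Lemma~\ref{model2} applies directly. With this correction your proof goes through; note also that the spin hypothesis is then used only implicitly, to make $\sigma$ well-defined so that the assumption $\sigma(X_0,A)=\sigma(X_1,A)=0$ makes sense and feeds into Lemma~\ref{model2}.
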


\begin{proof}
Since $M$ is simply connected, we can endow the circles $X_0$ and $X_1$ with $A$-coherent orientations. We identify some open subset $M_0$ of $M$ with the model Hilbert space $\mH$ and the latter with $\ell^2$. Therefore, $M_0$ contains the circles $S_0$, $S_1$ and the annulus $\Sigma$ that are introduced in Section \ref{modelsec}.  Since $M$ is simply connected and has dimension greater than three, the identity component of its diffeomorphism group acts transitively on equipotent finite collections of oriented embedded circles. Thanks to Lemma \ref{functors}, we can assume that $X_0=S_0$, $X_1=S_1$ and that the standard orientations of these circles are $A$-coherent. By Lemma \ref{model2}, we can further assume that $A=dh$ on $\Sigma$.

Let 
\[
k: [0,1]\times \ell^2 \rightarrow \ell^2
\]
be the smooth homotopy introduced in Example \ref{exh} and set
\[
W_0 := k^{-1}(0).
\]
Then
\[
W:=W_0 \cup \bigl( [0,1]\times (X \setminus (S_0 \cup S_1)) \bigr)
\]
is a cobordism from $X$ to $X \setminus (S_0 \cup S_1)$. By Property (a) in Example \ref{exh} and by the identity $A|_{\Sigma} = dh|_{\Sigma}$, the map
\[
B: \Bigl(  \bigl[ 0 , {\textstyle \frac{1}{4}} \bigr] \times M \Bigr) \cup \Bigl( \bigl[0, {\textstyle \frac{2}{3}} \bigr] \times (M\setminus M_0) \Bigr) \cup  \Bigl(  \bigl[ 0 , {\textstyle \frac{2}{3} } \bigr] \times \Sigma \Bigr) \rightarrow \Phi_2(\mR\oplus\mH,\mH)
\]
given by
\[
B(t,u) := \left\{ \begin{array}{ll} S^*_L A(u) & \mbox{on }  \Bigl(  \bigl[ 0 , \frac{1}{4} \bigr] \times M \Bigr) \cup \Bigl( \bigl[0,\frac{2}{3}\bigr] \times (M\setminus M_0) \Bigr), \\ dk(t,u) & \mbox{on }  \bigl[ 0 , \frac{2}{3} \bigr] \times D, \end{array} \right.
\]
is well defined and continuous. Since its domain is a retract of $[0,\frac{2}{3}]\times M$, it extends to a map
\[
B: \bigl[ 0 , {\textstyle \frac{2}{3} }\bigr] \times M \rightarrow \Phi_2(\mR\oplus \mH,\mH).
\]
We now set
\[
A'(x) := S^*_R B \bigl( {\textstyle \frac{2}{3} }, x \bigr) \qquad \forall x\in M,
\]
and notice that
\[
B \bigl( {\textstyle \frac{2}{3}},x \bigr) = S_L^* A'(x) \qquad \forall x\in M\setminus M_0,
\]
and in particular on $X\setminus (S_0\cup S_1)$. We now extend $B$ to $[0,1]\times M$ by setting
\[
B(t,x) = \vartheta(t) B \bigl( {\textstyle \frac{2}{3}}, x\bigr) + (1-\vartheta(t)) S_L^* A'(x),
\]
where $\vartheta: [\frac{2}{3},1] \rightarrow [0,1]$ is a continuous function such that $\vartheta(\frac{2}{3})=1$ and $\vartheta(t)=0$ for every $t\in [\frac{3}{4},1]$. Thanks to the properties (a), (b) and (c) from Example \ref{exh}, the map $B$ is a framing of the cobordism $W$ starting at $A$. We conclude that $(X,A)$ and $(X\setminus (S_0\cup S_1), A')$ are framed cobordant.
\end{proof}

We conclude this section by the following result, telling us how to construct a framed cobordism matching two circles.

\begin{proposition}
\label{match2}
Let $X_0$ and $X_1$ be embedded circles in the simply connected Hilbert manifold $M$. Let $A_0,A_1: M \rightarrow \Phi_1(\mH)$ be homotopic and spin framings of $X_0$ and $X_1$, respectively, with
\begin{equation}
\label{theass}
\sigma(X_0,A_0) = \sigma(X_1,A_1).
\end{equation}
Then $(X_0,A_0)$ and $(X_1,A_1)$ are framed cobordant. 
\end{proposition}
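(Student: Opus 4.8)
The plan is to reduce to a single standard circle inside a chart, dispose of the case $\sigma(X_0,A_0)=1$ by an explicit cancellation, and treat the harder case $\sigma(X_0,A_0)=0$ by normalising both framings to the model map $g$ of Example \ref{exg} and then connecting them through framings of the trivial cobordism. For the reduction: since $M$ is simply connected and infinite dimensional, the identity component of its diffeomorphism group acts transitively on oriented embedded circles (as in the proof of Proposition \ref{kill2}), so I would pick diffeomorphisms $\varphi_i$ isotopic to the identity with $\varphi_i(X_i)=S_0$, the circle of Example \ref{exf} sitting in a chart $M_0\cong\ell^2$. By Lemma \ref{functors}, $(X_i,A_i)$ is framed cobordant to $(S_0,B_i)$ with $B_i:=A_i\circ\varphi_i^{-1}\,d\varphi_i^{-1}$; since $\varphi_i$ is isotopic to the identity and the factor $d\varphi_i^{-1}$, taking values in $\mathrm{GL}(\mH)$, does not affect surjectivity, $B_i$ is homotopic to $A_i$ and $\sigma(S_0,B_i)=\sigma(X_i,A_i)$, so the $B_i$ are spin, homotopic to each other, and have a common value $\sigma\in\mZ_2$ of $\sigma(S_0,\cdot)$. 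When $\sigma=1$ one is done at once: $\eta(B_i|_{D_0},\Phi_1^{\mathrm{sing}}(\mH))=\sigma(S_0,B_i)=1$, so Proposition \ref{kill1} gives that $(S_0,B_i)$ is framed cobordant to $(\emptyset,B_i')$ for a framing $B_i'$ of the empty manifold with $B_i'$ homotopic to $B_i$ (Remark \ref{trivial1}); since $B_0'\simeq B_0\simeq B_1\simeq B_1'$, Remark \ref{trivial2} (whose kernel condition is vacuous for $X=\emptyset$) shows $(\emptyset,B_0')$ and $(\emptyset,B_1')$ are framed cobordant, and chaining the cobordisms finishes this case.

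Assume now $\sigma=0$. Applying Lemma \ref{model1} inside $M_0\cong\ell^2$ to each $B_i$ produces homotopies, stationary outside $B_2$ and with kernel $TS_0$ over $S_0$ throughout, from $B_i$ to framings $\tilde A_i$ with $\tilde A_i|_{D_0}=dg|_{D_0}$; by Remark \ref{trivial2}, $(S_0,B_i)$ is framed cobordant to $(S_0,\tilde A_i)$. The framings $\tilde A_i$ are still spin, still homotopic as maps into $\Phi_1(\mH)$, and now restrict to the \emph{same} loop $dg|_{S_0}$ on $S_0$; moreover $\sigma(S_0,\tilde A_i)=\eta(dg|_{D_0},\Phi_1^{\mathrm{sing}}(\mH))=0$. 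By Remark \ref{trivial2} it then suffices to connect $\tilde A_0$ and $\tilde A_1$ by a homotopy $G:[0,1]\times M\to\Phi_1(\mH)$ with $\ker G(t,x)=T_xS_0$ for every $(t,x)\in[0,1]\times S_0$, since such a $G$ is a framing of the trivial cobordism $[0,1]\times S_0$.

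To build $G$ I would start from an arbitrary homotopy of maps $G_0$ from $\tilde A_0$ to $\tilde A_1$ and examine its restriction to the cylinder $[0,1]\times S_0$; its two boundary loops equal $dg|_{S_0}$, which is null-homotopic in $\Phi_1^0(\mH)$ because it bounds $dg|_{D_0}$ and $\pi_1(\Phi_1^0(\mH))\cong\pi_1(\Phi_1(\mH))$. Capping both ends of this cylinder by the disk $D_0\subset M$ and using $dg|_{D_0}$ yields a $2$-sphere in $[0,1]\times M$ on which $G_0$ restricts to a map that is null-homotopic in $\Phi_1(\mH)$: indeed $G_0|_{\{0\}\times M}=\tilde A_0$ is spin and $\{0\}\times M\hookrightarrow[0,1]\times M$ is a homotopy equivalence, so $\pi_2(G_0)=0$. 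Hence, exactly as in the proof of Proposition \ref{tauprop}, the intersection number of this sphere with $\Phi_1^{\mathrm{sing}}(\mH)$ is zero, and since the two caps are both $dg|_{D_0}$ they contribute equally and cancel modulo two, giving $\eta(G_0|_{[0,1]\times S_0},\Phi_1^{\mathrm{sing}}(\mH))=0$. Using Lemma \ref{annulus} together with a correction by a continuous family of orthogonal transformations of $\mH$ — the device used in the proofs of Lemmas \ref{model1} and \ref{model2}, which absorbs a residual sign ambiguity in $\pi_1(\Phi_1^0(\mH))$ along $S_0$ — I would deform $G_0|_{[0,1]\times S_0}$, keeping its two ends fixed, into a cylinder map with kernel $TS_0$ throughout, and then extend the deformation over all of $M$ as in those proofs, obtaining the desired $G$; combining with the reduction steps this proves the proposition.

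The hard part will be precisely this last deformation \emph{rel} the two ends of the cylinder together with its extension over $M$. Lemma \ref{annulus} controls maps of the annulus only up to free homotopy, and — as the remark following it points out — when the boundary is kept fixed there is a second, $\pi_1(\Phi_1^0(\mH))$-type obstruction beyond the intersection number. Showing that this secondary obstruction vanishes, and that the modification of $G_0$ near $S_0$ and $D_0$ extends over $M$ without creating new intersections with $\Phi_1^{\mathrm{sing}}(\mH)$, is where simple-connectedness, the spin hypothesis, and the coherence of the natural orientations along $S_0$ (in the spirit of Lemma \ref{orlemma}) must be brought in, in the same way as in the proof of Proposition \ref{tauprop}. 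I expect this assembly to be the technical core of the argument, the rest being formal consequences of the machinery already developed.
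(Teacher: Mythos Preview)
Your reduction to $X_0=X_1=S_0$ via Lemma \ref{functors} and the normalisation step via Lemma \ref{model1} are right and match the paper. Your treatment of the case $\sigma=1$ by cancelling both circles through Proposition \ref{kill1} is a correct alternative, though the paper does not split into cases: Lemma \ref{model1} already yields $A_0|_{D_0}=A_1|_{D_0}$ (equal to $df|_{D_0}$ or $dg|_{D_0}$ according to the common value of $\sigma$), and from that point the argument is uniform.

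The genuine gap is in your $\sigma=0$ case, precisely where you flag it. Your plan is to pass through Lemma \ref{annulus} and then absorb the residual $\pi_1(\Phi_1^0(\mH))$-obstruction by a continuous family of orthogonal transformations, as in Lemmas \ref{model1} and \ref{model2}. That cannot work: $\mathrm{O}(\mH)$ is contractible by Kuiper, so right-multiplying a loop in $\Phi_1(\mH)$ by a family $U(t,\cdot)\in\mathrm{O}(\mH)$ does not change its class in $\pi_1(\Phi_1(\mH))\cong\pi_1(\Phi_1^0(\mH))$. In Lemmas \ref{model1} and \ref{model2} those $U$'s are used to fix \emph{kernels}, not to kill $\pi_1$-classes. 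So the ``hard part'' you anticipate is real, and Lemma \ref{annulus} plus orthogonal corrections will not close it.

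The paper's resolution of this point is both different and much simpler, and avoids Lemma \ref{annulus} and intersection numbers entirely. After Lemma \ref{model1} one has $A_0|_{D_0}=A_1|_{D_0}$; pick any homotopy $H_0:[0,1]\times M\to\Phi_1(\mH)$ from $A_0$ to $A_1$. For each $u\in D_0$ the path $t\mapsto H_0(t,u)$ is a \emph{closed} loop in $\Phi_1(\mH)$, and since $D_0$ is connected all these loops give a single element $a\in\pi_1(\Phi_1(\mH))=\mZ_2$. If $a\neq 0$, replace $H_0$ by $(t,x)\mapsto H_0(t,x)\,T(t)$, where $T:[0,1]\to\Phi_0(\mH)$ is a loop based at $I$ representing the nontrivial class in $\pi_1(\Phi_0(\mH))$; this is still a homotopy from $A_0$ to $A_1$ and now has $a=0$. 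With $a=0$ one straightens $H_0$ to be \emph{constantly equal to} $A_0=A_1$ on all of $[0,1]\times D_0$ by an elementary retraction/extension argument (no intersection theory, no annulus lemma). The resulting homotopy has $\ker H(t,u)=T_uS_0$ for all $(t,u)\in[0,1]\times S_0$, so by Remark \ref{trivial2} it frames the trivial cobordism and $(S_0,A_0)$ is framed cobordant to $(S_0,A_1)$. The key trick you are missing is thus the twist by a loop in $\Phi_0(\mH)$ --- not in $\mathrm{O}(\mH)$ --- to kill the single $\mZ_2$-obstruction.
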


\begin{proof}
We identify some open subset of $M$ with the model Hilbert space $\mH$ and the latter with $\ell^2$. In particular, we see the disk $D_0$ and its boundary $S_0$ as subsets of $M$.
Thanks to Lemma \ref{model1}, we may assume that $X_0=X_1=S_0$ and that
\[
A_0|_{D_0} = A_1|_{D_0}.
\]
Indeed, Lemma \ref{model1} gives us the same model for $A_0|_{D_0}$ and $A_1|_{D_0}$ thanks to the assumption (\ref{theass}).  Now let 
\[
H_0 : [0,1] \times M \rightarrow \Phi_1(\mH)
\]
be a homotopy from $A_0$ to $A_1$. Since $A_0$ and $A_1$ coincide on $D_0$, the paths
\[
t\mapsto H_0(t,u),\qquad u\in D_0,
\]
are closed in $\Phi_1(\mH)$ and hence define an element $a$ of $\pi_1(\Phi_1(\mH))=\mZ_2$ that is independent of the choice of $u\in D_0$ because $D_0$ is connected. Up to replacing $H_0$ by another homotopy from $A_0$ to $A_1$, we may assume that $a$ is the trivial element: Indeed, if this is not the case we can replace $H_0$ by the homotopy
\[
[0,1] \times M \rightarrow \Phi_1(\mH), \qquad (t,x) \mapsto H_0(t,x) T(t),
\]
where $T:[0,1]\rightarrow \Phi_0(\mH)$ is a path such that $T(0)=T(1)=I$ and inducing the non-trivial element in $\pi_1(\Phi_0(\mH))=\mZ_2$. 

It is now easy to modify $H_0$ into a new homotopy $H$ having the further property
\begin{equation}
\label{extra}
H(t,u) = A_0(u) = A_1(u) \qquad \forall (t,u)\in [0,1] \times D_0.
\end{equation}
This homotopy induces a framing of the trivial cobordism $[0,1]\times S_0$, see Remark \ref{trivial2}, and shows that $(X_0,A_0)$ and $(X_1,A_1)$ are framed cobordant.

The construction of $H$ is standard, but for the sake of completeness we give the details. We denote by $Q$ the square $[0,1]^2$ and we define
\[
H_1: \partial Q \times D_0 \rightarrow \Phi_1(\mH)
\]
by
\[
H_1(s,t,u) := \left\{ \begin{array}{ll} H_0(t,u) & \mbox{if } s=0, \\ A_0(u)=A_1(u) & \mbox{if } t\in \{0,1\} \mbox{ or } s=1. \end{array} \right.
\]
Our assumption on $H_0$ implies that the closed curve $H_1|_{ \partial Q \times \{u\}}$ is contractible in $\Phi_1(\mH)$, for every $u\in D_0$. Therefore, we can extend $H_1$ to a map
\[
H_1: Q \times D_0 \rightarrow \Phi_1(\mH).
\]
Now we set
\[
C:=([0,1] \times \{0,1\} ) \cup (\{0\} \times [0,1]) \subset Q,
\]
and we define a map
\[
K: (C \times M) \cup (Q \times D_0) \rightarrow \Phi_1(\mH)
\]
by
\[
K(s,t,u) :=  \left\{ \begin{array}{ll} H_0(t,u) & \mbox{if } s=0, \\ A_0(u) & \mbox{if } t=0, \\ A_1(u) & \mbox{if } t=1, \\ H_1(s,t,u) & \mbox{if } u\in D_0. \end{array} \right.
\]
Since $C$ is a deformation retract of $Q$ and $D_0$ is a retract of some neighborhood of it in $M$, the set $(C \times M) \cup (Q \times D_0)$ is a retract of $Q\times M$ and the map $K$ extends to a map on the whole $Q\times M$. The restriction
\[
H : [0,1] \times M \rightarrow \Phi_1(\mH), \qquad (t,u) \mapsto K(1,t,u),
\]
is a homotopy from $A_0$ to $A_1$ satisfying (\ref{extra}).
\end{proof}

\section{Proof of the main theorem}
\label{proofsec}

We can now state and prove the main theorem of this article.

\begin{theorem}
Let $M$ be a simply connected Hilbert manifold. Then the one-dimensional framed submanifolds $(X_0,A_0)$ and $(X_1,A_1)$ are framed cobordant if and only if the following conditions are satisfied.
\begin{enumerate}[(i)]
\item The maps $A_0, A_1: M \rightarrow \Phi_1(\mH)$ are homotopic.
\item If $A_0$ and $A_1$ are both spin, then $\tau(X_0,A_0)=\tau(X_1,A_1)$. If $A_0$ and $A_1$ are not spin, then no further condition is necessary. 
\end{enumerate}
Furthermore, the map
\begin{equation}
\label{themap}
(X,A) \mapsto \left\{ \begin{array}{ll}  ([A], \tau(X,A)) & \mbox{if  $A$ is spin}, \\ \; [A] & \mbox{if $A$  is not spin}, \end{array} \right.
\end{equation}
induces a bijection
\[
\Omega_1^{\mathrm{fr}}(M) \cong ( [M,\Phi_1(\mH)]_{\mathrm{sp}} \times \mZ_2) \sqcup [M,\Phi(\mH)]_{\mathrm{ns}}.
\]
\end{theorem}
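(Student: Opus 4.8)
The plan is to derive the statement from Propositions~\ref{kill1}, \ref{kill2} and \ref{match2}, together with the cobordism invariance of $\tau$ (Proposition~\ref{tauprop}) and the elementary Remarks~\ref{trivial1} and \ref{trivial2}. The necessity of (i) and (ii), which also shows that \eqref{themap} descends to $\Omega_1^{\mathrm{fr}}(M)$, is immediate: a framed cobordism from $(X_0,A_0)$ to $(X_1,A_1)$ makes $A_0$ and $A_1$ homotopic by Remark~\ref{trivial1}, so $[A_0]=[A_1]$ and the two framings are simultaneously spin or non-spin, and if they are spin then $\tau(X_0,A_0)=\tau(X_1,A_1)$ by Proposition~\ref{tauprop}. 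It then remains to prove that the induced map on $\Omega_1^{\mathrm{fr}}(M)$ is injective --- equivalently, that conditions (i) and (ii) are sufficient for framed cobordance --- and surjective.

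For injectivity, assume (i) and (ii); then $A_0\simeq A_1$, so the two framings are both spin or both non-spin, and I would reduce each $(X_i,A_i)$ to a normal form. In the non-spin case: given a component $S$ of $X_i$, pick a capping disk $\varphi_0\colon\mD\to M$ (possible as $M$ is simply connected), and if $\eta(A_i\circ\varphi_0,\Phi_1^{\mathrm{sing}}(\mH))=0$ replace it by its connected sum with a sphere $s\colon S^2\to M$ satisfying $\eta(A_i\circ s,\Phi_1^{\mathrm{sing}}(\mH))=1$, which exists because $A_i$ is non-spin (Lemma~\ref{intnumb}), thereby obtaining a capping disk $\varphi$ with $\eta(A_i\circ\varphi,\Phi_1^{\mathrm{sing}}(\mH))=1$; Proposition~\ref{kill1} then deletes $S$ and replaces $A_i$ by a homotopic, hence still non-spin, framing. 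Iterating, $(X_i,A_i)$ is framed cobordant to some $(\emptyset,A_i')$ with $A_i'\simeq A_i$, and since $A_0'\simeq A_1'$ and the framing condition on $\emptyset$ is vacuous, Remark~\ref{trivial2} shows $(\emptyset,A_0')$ and $(\emptyset,A_1')$ are framed cobordant. In the spin case I would instead use Proposition~\ref{kill1} to delete any component $S$ with $\sigma(S,\cdot)=1$ (legitimate since then $\eta(\,\cdot\circ\varphi,\Phi_1^{\mathrm{sing}}(\mH))=\sigma(S,\cdot)=1$ for every capping disk) and Proposition~\ref{kill2} to delete any pair of components with $\sigma=0$; as long as there are at least two components, or a single component with $\sigma=1$, one of these moves applies, so the process terminates only at the empty submanifold or at a single circle $S$ with $\sigma(S,\cdot)=0$. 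Since each move preserves spin-ness and, by Proposition~\ref{tauprop}, the value of $\tau$, the terminal submanifold is empty if $\tau(X_i,A_i)=0$ and a single $\sigma$-zero circle if $\tau(X_i,A_i)=1$; as $\tau(X_0,A_0)=\tau(X_1,A_1)$ both reductions land in the same case, and I would conclude with Remark~\ref{trivial2} in the first case and with Proposition~\ref{match2} in the second, whose hypotheses (vanishing $\sigma$ on both circles, homotopic spin framings) are then satisfied.

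For surjectivity, note that $(\emptyset,A)$ is a framed submanifold for every $A\colon M\to\Phi_1(\mH)$ and is sent by \eqref{themap} to $[A]$ when $A$ is non-spin and to $([A],0)$ when $A$ is spin, which accounts for $[M,\Phi_1(\mH)]_{\mathrm{ns}}$ and for $[M,\Phi_1(\mH)]_{\mathrm{sp}}\times\{0\}$. To hit $([A],1)$ for a spin $A$, I would take a small embedded circle $S=\partial D$ bounding a small embedded disk $D$ inside a chart of $M$, homotope $A$ --- supported in that contractible chart --- to a framing equal to a fixed surjective operator on a neighbourhood of $D$, and then post-compose near $S$ with a loop of orthogonal operators carrying the (now trivial) line bundle $\ker$ over $S^1$ onto $TS$, available by Kuiper's theorem; the resulting $A'$ is homotopic to $A$, hence spin, satisfies $\ker A'(x)=T_xS$ on $S$, and maps a neighbourhood of $D$ into $\Phi_1^0(\mH)$, so $\sigma(S,A')=\eta(A'|_D,\Phi_1^{\mathrm{sing}}(\mH))=0$ and therefore $\tau(S,A')=1$. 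Thus $(S,A')$ maps to $([A],1)$. Combining this with the injectivity above yields the bijection, and the iff characterization in the first part of the statement is the conjunction of the necessity established in the first paragraph with this injectivity.

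The step that will demand the most care is the spin case of the injectivity argument: one must check that the normal-form reduction cannot get stuck before reaching the empty set or a single $\sigma$-zero circle, and that the terminal configuration is completely determined by $\tau$ --- this is exactly where Propositions~\ref{kill1} and \ref{kill2} must be played off against the cobordism invariance of $\tau$ in Proposition~\ref{tauprop}. A secondary point needing attention is, in the surjectivity argument, deforming $A$ into $\Phi_1^0(\mH)$ near $D$ while simultaneously prescribing the kernel along $S$ and not disturbing the global homotopy class of the framing; both manipulations are routine applications of the contractibility of the chart and of $\mathrm{O}(\mH)$.
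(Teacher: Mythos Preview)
Your argument is correct and follows the same route as the paper: necessity via Remark~\ref{trivial1} and Proposition~\ref{tauprop}, the non-spin and spin reductions to normal form via Propositions~\ref{kill1}, \ref{kill2} and~\ref{match2}, and an explicit construction for surjectivity. One small slip: in your surjectivity step you must \emph{pre}-compose with the orthogonal loop, since post-composition with an invertible operator does not change the kernel; the paper sidesteps this by using the explicit Hopf model $dg$ of Example~\ref{exg} (interpolated with a finite-rank perturbation) in place of your Kuiper-based modification, but the two constructions are equivalent in effect.
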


\begin{proof}
Let us first assume that $(X_0,A_0)$ and $(X_1,A_1)$ are framed cobordant. Then $A_0$ and $A_1$ are homotopic, as observed in Remark \ref{trivial1}, so (i) holds. In particular, they are either both spin or both not spin. If additionally $A_0$ and $A_1$ are spin, then Proposition \ref{tauprop} implies that $\tau(X_0,A_0) = \tau(X_1,A_1)$. Therefore, (ii) holds.

Conversely, assume that (i) and (ii) hold. We first consider the case in which $A_0$ and $A_1$ are not spin. This means that our only assumption is that $A_0$ and $A_1$ are homotopic.
Let $S$ be a connected component of $X_0$. Since $A_0$ is not spin, we can find a map
\[
\varphi: \mD \rightarrow M
\]
mapping $\partial \mD$ homeomorphically onto $S$ such that
\[
\eta(A\circ \varphi,\Phi_1^{\mathrm{sing}}(\mH)) = 1.
\]
By Proposition \ref{kill1}, $(X_0,A_0)$ is framed cobordant to $(X_0\setminus S,A_0')$, for some suitable framing $A'_0: M \rightarrow \Phi_1(\mH)$ of $X_0\setminus S$. By iterating this argument, we can eliminate all components of $X_0$ and obtain that $(X_0,A_0)$ is framed cobordant to $(\emptyset,\tilde{A}_0)$, for a suitable map $\tilde{A}_0: M \rightarrow \Phi_1(\mH)$ which is homotopic to $A_0$, and hence $(X_0,A_0)$ is framed cobordant to $(\emptyset, A_0)$, because any homotopy of Fredholm maps defines a cobordisms from the empty set to itself. By the same reason, also $(X_1,A_1)$ is framed cobordant to $(\emptyset,A_1)$. Since $A_0$ and $A_1$ are homotopic, the pairs $(\emptyset, A_0)$ and $(\emptyset,A_1)$ are framed cobordant and so are the given framed submanifolds $(X_0,A_0)$ and $(X_1,A_1)$.

Now we consider the case in which $A_0$ and $A_1$ are spin. Arguing as in the previous case, we can eliminate from $X_0$ all the components $S$ with $\sigma(S,A_0)=1$. Up to a first framed cobordism, we can then assume that all the components of $S$ of $X_0$ satisfy $\sigma(S,A_0)=0$. Thanks to Proposition \ref{kill2}, up to a second framed cobordism we can eliminate one by one pairs of components of $X_0$ until we remain with just one or no components. This shows that $(X_0,A_0)$ is framed cobordant to some $(X_0',A_0')$, where $X_0'$ is either empty or is consists of one component and satisfies $\sigma(X_0',A_0')=0$. The first case arises when $\tau(X_0',A_0') = \tau(X_0,A_0)=0$, the second one when $\tau(X_0',A_0') = \tau(X_0,A_0)=1$.

The same is true for $(X_1,A_1)$: $(X_1,A_1)$ is framed cobordant to some $(X_1',A_1')$, where $X_1'$ is either empty  - if $\tau(X_1',A_1') = \tau(X_1,A_1)=0$ - or consists of one component and satisfies $\sigma(X_1',A_1')=0$ -  if $\tau(X_1',A_1') = \tau(X_1,A_1)=0$. The assumption $\tau(X_0,A_0)=\tau(X_1,A_1)$ tells us that either $X_0'$ and $X_1'$ are both empty, or they both consist of one component. In the first case, a framed cobordism from $(X_0',A_0')$ to $(X_1',A_1')$ is induced by a homotopy from $A_0'$ to $A_1'$. In the second case, $(X_0',A_0')$ and $(X_1',A_1')$ are framed cobordant thanks to Proposition \ref{match2}. In either case, we conclude that $(X_0,A_0)$ is framed cobordant to $(X_1,A_1)$.

By what we have proven so far, the map (\ref{themap}) is well defined and injective. Let us check that it is surjective. If $\alpha\in [M,\Phi_1(\mH)]_{\mathrm{sp}}$, then any $A\in \alpha$ produces the trivial framed submanifold $(\emptyset,A)$ that satisfies $\tau(\emptyset,A)=0$. Its framed cobordism class is then mapped to $(\alpha,0)$ by (\ref{themap}). If $\alpha\in [M,\Phi_1(\mH)]_{\mathrm{ns}}$, then any $A\in \alpha$ produces the trivial framed submanifold $(\emptyset,A)$, whose framed cobordism class is mapped to $\alpha$ by (\ref{themap}). Therefore, it is enough to show that for any $\alpha\in [M,\Phi_1(\mH)]_{\mathrm{sp}}$ there exists a framed submanifold $(X,A)$ with $A\in \alpha$ and $\tau(X,A)=1$.

Fix some open subset $M_0$ of $M$ that can be identified with the model Hilbert space $\mH\cong \ell^2$. 
Since $\Phi_1(\mH)$ is connected, we can find a map $A_0 : M \rightarrow \Phi_1(\mH)$ in the homotopy class $\alpha$ such that for every $u\in M_0\cong \ell^2$ the operator $A_0(u)$ coincides with the operator
\[
T: \ell^2 \rightarrow \ell^2, \qquad (v_1,v_2,v_3, v_4, v_5, \dots) \mapsto (0,0,0,v_5,v_6,\dots).
\]
We can modify $A_0$ and obtain a new map $A: M \rightarrow \Phi_1(\mH)$ that coincides with $A_0$ outside of $M_0$ and
\[
A(u) = \chi(u) dg(u) + (1-\chi(u)) T  
 \qquad \forall u\in M_0 \cong \ell^2,
\]
where $g$ is the Fredholm map from Example \ref{exg} and
$\chi: \ell^2 \rightarrow \mR$ is a continuous function such that $\chi=1$ on $B_2$ and $\chi=0$ on $B_3^c$. Note that $A(u)$ is a Fredholm operator for every $u\in \ell^2$ because the difference $dg(u)-T$ has finite rank. Moreover, the map $A$ is homotopic to $A_0$ by the homotopy
\[
(t,u) \mapsto t \chi(u) dg(u) + (1-t\chi(u)) T,
\]
and hence is still an element of $\alpha$. Since $A$ coincides with $dg$ on $B_2$, it is a framing of $S_0$, which is now seen as a submanifold of $M$, and
\[
\sigma(S_0,A) = \sigma(S_0, dg) = 0.
\]
We conclude that $\tau(S_0,A)=1$.
\end{proof}

Theorem \ref{main} from the Introduction is an immediate consequence of the above result and Theorem \ref{fromold}. 

\appendix
\section{The variety of non-surjective Fredholm operators}
\label{appsec}

\setcounter{equation}{0}
\numberwithin{equation}{section}

Let $\mH$ be a real infinite dimensional Hilbert space. The symbol $\mathrm{L}(\mH)$ denotes the Banach space of bounded linear operators on $\mH$. We use the notation $\mathrm{L}(\mH_1,\mH_2)$ to denote the Banach space of bounded linear operators between different Hilbert spaces $\mH_1$ and $\mH_2$. The space $\Phi_n(\mH)$ of Fredholm operators of index $n$ on $\mH$ is an open subset of $\mathrm{L}(\mH)$.  

Let $n$ and $k$ be non-negative integers and set
\[
\Phi_n^k (\mH) := \{ F \in \Phi_n(\mH) \mid \dim \coker F = k\}.
\]
The set $\Phi_n^0(\mH)$ is open in $\Phi_n(\mH)$ and its complement, i.e. the space of non-surjective Fredholm operators of index $n$, is the disjoint union of the spaces $\Phi_n^k(\mH)$ for $k\geq 1$:
\[
\Phi_n^{\mathrm{sing}} (\mH) := \Phi_n(\mH) \setminus \Phi_n^0(\mH) = \bigsqcup_{k\geq 1} \Phi_n^k (\mH).
\]
The first aim of this appendix is to show that each $\Phi_n^k(\mH)$ is a submanifold of $\Phi_n(\mH)$ and to determine its codimension and tangent spaces.

\begin{proposition}
\label{singular}
For every non-negative integers $n$ and $k$, the set $\Phi_n^k(\mH)$ is a smooth submanifold of $\Phi_n(\mH)$ of codimension $k(n+k)$. Its tangent space at $F\in  \Phi_n^k (\mH)$ is the Banach space
\[
T_F \Phi_n^k(\mH) = \bigl\{ \widehat{F} \in \mathrm{L}(\mH) \mid (I-P) \hat{F}|_{\ker F} = 0\bigr\},
\]
where $P: \mH\rightarrow \mH$ denotes the orthogonal projector onto the image of $F$.
\end{proposition}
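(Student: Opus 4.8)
The plan is to realise $\Phi_n^k(\mH)$ locally, near any point $F_0$, as the zero set of a submersion onto a finite‑dimensional space, and then to read off the codimension and the tangent space from the differential. If $\Phi_n^k(\mH)$ is empty there is nothing to prove, so fix $F_0\in\Phi_n^k(\mH)$. First I would introduce the orthogonal splittings $\mH=\ker F_0\oplus(\ker F_0)^\perp$ and $\mH=\operatorname{im}F_0\oplus(\operatorname{im}F_0)^\perp$; since $F_0$ has index $n$ and $\dim\coker F_0=k$ we have $\dim\ker F_0=n+k$ and $\dim(\operatorname{im}F_0)^\perp=k$. With respect to these decompositions every $F\in\mathrm{L}(\mH)$ becomes a block operator
\[
F=\begin{pmatrix} a(F) & b(F) \\ c(F) & d(F)\end{pmatrix},
\]
where $a(F)\colon\ker F_0\to\operatorname{im}F_0$, $b(F)\colon(\ker F_0)^\perp\to\operatorname{im}F_0$, $c(F)\colon\ker F_0\to(\operatorname{im}F_0)^\perp$ and $d(F)\colon(\ker F_0)^\perp\to(\operatorname{im}F_0)^\perp$ depend boundedly and linearly, hence smoothly, on $F$. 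At $F_0$ one has $a(F_0)=c(F_0)=d(F_0)=0$, while $b(F_0)=F_0|_{(\ker F_0)^\perp}$ is an isomorphism onto $\operatorname{im}F_0$; therefore $b(F)$ is invertible, and $F\mapsto b(F)^{-1}$ is smooth, for $F$ in a neighbourhood $\mathcal{U}\subset\Phi_n(\mH)$ of $F_0$.

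Next, for $F\in\mathcal{U}$ I would perform the Schur‑complement elimination: solving $a(F)x+b(F)y=u$ for $y$ and substituting into $c(F)x+d(F)y=v$ shows that a pair $(u,v)\in\operatorname{im}F_0\oplus(\operatorname{im}F_0)^\perp$ lies in $\operatorname{im}F$ if and only if $v-d(F)b(F)^{-1}u$ lies in the image of the finite‑rank operator
\[
G(F):=c(F)-d(F)\,b(F)^{-1}a(F)\in\mathrm{L}\bigl(\ker F_0,(\operatorname{im}F_0)^\perp\bigr).
\]
Since $(u,v)\mapsto v-d(F)b(F)^{-1}u$ is a surjection of $\mH$ onto $(\operatorname{im}F_0)^\perp$, this gives a canonical isomorphism $\coker F\cong(\operatorname{im}F_0)^\perp/\operatorname{im}G(F)$, so $\dim\coker F=k-\operatorname{rank}G(F)$ and hence $\mathcal{U}\cap\Phi_n^k(\mH)=G^{-1}(0)$.

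It then remains to check that $G\colon\mathcal{U}\to\mathrm{L}(\ker F_0,(\operatorname{im}F_0)^\perp)$ is a submersion. Because $a$ and $d$ vanish at $F_0$, the differential of $G$ at $F_0$ is $\widehat F\mapsto(I-P)\widehat F|_{\ker F_0}$, where $P$ is the orthogonal projector onto $\operatorname{im}F_0$; extending any prescribed block by zero shows this map is onto, so $G$ is a submersion after shrinking $\mathcal{U}$. Consequently $\Phi_n^k(\mH)$ is near $F_0$ a smooth submanifold of codimension $\dim\mathrm{L}(\ker F_0,(\operatorname{im}F_0)^\perp)=k(n+k)$, with
\[
T_{F_0}\Phi_n^k(\mH)=\ker dG(F_0)=\bigl\{\widehat F\in\mathrm{L}(\mH)\mid (I-P)\widehat F|_{\ker F_0}=0\bigr\},
\]
which is the asserted formula (applied with $F=F_0$, noting that $\operatorname{im}F_0=\operatorname{im}F$). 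As $F_0$ was arbitrary, this proves the proposition.

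I do not expect a genuinely hard step. The part that needs the most care is the block bookkeeping identifying $\dim\coker F$ with $k-\operatorname{rank}G(F)$, together with the verification that $dG(F_0)$ is surjective; everything else — smoothness of the blocks and of $F\mapsto b(F)^{-1}$, the codimension count, and translating $\ker dG(F_0)$ into the condition $(I-P)\widehat F|_{\ker F}=0$ — is routine.
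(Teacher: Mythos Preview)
Your proposal is correct and follows essentially the same route as the paper: both fix $F_0$, split domain and codomain by kernel and image, and realise $\Phi_n^k(\mH)$ locally as the zero set of the Schur complement map into $\mathrm{L}(\ker F_0,(\operatorname{im}F_0)^\perp)$, then read off codimension and tangent space from its differential. The only differences are cosmetic (you order the codomain summands as $\operatorname{im}F_0\oplus(\operatorname{im}F_0)^\perp$ while the paper takes the opposite order, and you check surjectivity of $dG$ only at $F_0$ and then shrink, whereas the paper verifies it on the whole chart).
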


\begin{proof}
Fix some $F_0\in \Phi_n^k(\mH)$ and consider the orthogonal splittings
\[
\mH = \mH_0 \oplus \mH_0^{\perp} = \mH_1 \oplus \mH_1^{\perp},
\]
where
\[
\begin{split}
\mH_0 &:= \ker F_0, \qquad \dim \mH_0 = n+k, \\
\mH_1 &:= (\mathrm{im}\, F_0)^{\perp}, \qquad \dim \mH_1 = k.
\end{split}
\]
We can write $F_0$ in block matrix form as
\[
F_0 = \left( \begin{array}{cc} 0 & 0 \\ 0 & D_0 \end{array} \right): \mH_0 \oplus \mH_0^{\perp} \rightarrow \mH_1 \oplus \mH_1^{\perp},
\]
where $D_0 : \mH_0^{\perp} \rightarrow \mH_1^{\perp}$ is an isomorphism. The set
\[
U:= \left\{ F = \Bigl( \begin{array}{cc} {\scriptstyle A} & {\scriptstyle B} \\ {\scriptstyle C} & {\scriptstyle D} \end{array} \Bigr): \mH_0 \oplus \mH_0^{\perp} \rightarrow  \mH_1 \oplus \mH_1^{\perp}  \mid D: \mH_0^{\perp} \rightarrow \mH_1^{\perp} \mbox{ isomorphism} \right\}
\]
is an open neighborhood of $F_0$ in $\mathrm{L}(\mH)$. The set $U$ is contained in $\Phi_n(\mH)$ because it consists of finite rank perturbations of operators of the form
\[
\left( \begin{array}{cc} 0 & 0 \\ 0 & D \end{array} \right), \qquad \mbox{with } D : \mH_0^{\perp} \rightarrow \mH_1^{\perp} \mbox{ isomorphism,}
\]
which are Fredholm of index $n$. Let 
\[
F =  \left( \begin{array}{cc} A & B \\ C & D \end{array} \right) 
\]
be an element of $U$. Then the operator 
\[
\left( \begin{array}{cc} I_{\mH_0} & 0 \\ -D^{-1} C & I_{\mH_0^{\perp}} \end{array} \right) : \mH_0 \oplus \mH_0^{\perp} \rightarrow \mH_0 \oplus \mH_0^{\perp} 
\]
is invertible and
\[
 \left( \begin{array}{cc} A & B \\ C & D \end{array} \right) \left( \begin{array}{cc} I_{\mH_0} & 0 \\ -D^{-1} C & I_{\mH_0^{\perp}} \end{array} \right) = \left( \begin{array}{cc} A-BD^{-1} C & B \\ 0 & D \end{array} \right).
\]
The above identity shows that $F\in U$ as above belongs to $\Phi_n^k(\mH)$ if and only if it is a zero of the map
\[
\varphi : U \rightarrow \mathrm{L}(\mH_0,\mH_1), \qquad   \left( \begin{array}{cc} A & B \\ C & D \end{array} \right) \mapsto A - BD^{-1}C.
\]
The map $\varphi$ is smooth and its differential has the form
\begin{equation}
\label{ildiffe}
d\varphi(F)[\widehat{F}] = \widehat{A} - \widehat{B} DC + BD^{-1} \widehat{D} D^{-1} C - B D^{-1} \widehat{C},
\end{equation}
for every
\[
F =  \left( \begin{array}{cc} A & B \\ C & D \end{array} \right) \in U \qquad \mbox{and} \qquad \widehat{F} =  \left( \begin{array}{cc} \widehat{A} & \widehat{B} \\ \widehat{C} & \widehat{D} \end{array} \right) \in \mathrm{L}(\mH).
\]
The linear mapping $d\varphi(F): \mathrm{L}(\mH) \rightarrow  \mathrm{L}(\mH_0,\mH_1)$ is surjective for every $F\in U$, as its restriction to operators of the form
\[
 \left( \begin{array}{cc} \widehat{A} &0 \\ 0 & 0 \end{array} \right), \qquad \widehat{A} \in \mathrm{L}(\mH_0,\mH_1),
 \]
 is  clearly surjective. Since the target space $\mathrm{L}(\mH_0,\mH_1)$ is finite dimensional, the kernel of $d\varphi(F)$ is complemented in $\mathrm{L}(\mH)$ and hence $d\varphi(F)$ is a left inverse. This shows that $\varphi$ is a submersion and hence
 \[
 \Phi_n^k(\mH) \cap U = \varphi^{-1}(0)
 \]
 is a smooth submanifold of $\Phi_n(\mH)$ of codimension
\[
\codim  \Phi_n^k(\mH) \cap U = \dim \mathrm{L}(\mH_0,\mH_1) = k(n+k).
\]
The fact that $F_0\in \Phi_n^k(\mH)$ was arbitrarily chosen implies that the same is true for $\Phi_n^k(\mH)$. By (\ref{ildiffe}), the tangent space of $\Phi_n^k(\mH)$ at 
\[
F_0 = \left( \begin{array}{cc} 0 & 0 \\ 0 & D_0 \end{array} \right): \mH_0 \oplus \mH_0^{\perp} \rightarrow \mH_1 \oplus \mH_1^{\perp},
\]
is 
\[
\begin{split}
T_{F_0} \Phi_n^k(\mH) = \ker d\varphi(F_0) &= \left\{  \widehat{F} =   \Bigl( \begin{array}{cc} {\scriptsize \widehat{A}} & {\scriptsize \widehat{B}} \\ {\scriptsize \widehat{C}} & {\scriptsize \widehat{D}} \end{array} \Bigr):  \mH_0 \oplus \mH_0^{\perp} \rightarrow  \mH_1 \oplus \mH_1^{\perp} \mid \widehat{A}=0 \right\} \\ &= \bigl\{ \widehat{F} \in \mathrm{L}(\mH) \mid (I-P) \widehat{F} |_{\ker F_0} = 0 \bigr\},
\end{split}
\]
where $P : \mH \rightarrow \mH$ is the orthogonal projector onto the image of $F_0$. 
\end{proof}

We conclude this section by discussing the transversality statement that is used in the proof of Proposition \ref{tauprop}. Here, $\Phi_{n+1}(\mR \oplus \mH,\mH)$ denotes the space of Fredholm operators of index $n+1$ from $\mR \oplus \mH$ to $\mH$, and if $B$ is in $\Phi_{n+1}(\mR \oplus \mH,\mH)$ then 
\[
S_R^* B= BS_R\in \Phi_n(\mH)
\]
is the composition of $B$ with the operator 
\[
S_R: \mH \rightarrow \mR \oplus \mH,  \qquad v \mapsto (0,v).
\]

\begin{proposition}
\label{apptrans}
Let $n\geq 0$ be an integer, $\Sigma$ a finite dimensional manifold, and 
\[
G: \Sigma \rightarrow \Phi_{n+1}(\mR \oplus \mH,\mH)
\]
a smooth map such that $G(x)$ is surjective for every $x\in \Sigma$. Set
\[
F: \Sigma \rightarrow \Phi_n(\mH), \qquad F:= S_R^* G.
\]
Then for every $x\in \Sigma$ we have:
\begin{enumerate}[(i)]
\item $F(x)$ belongs to $\Phi_n^{\mathrm{sing}}(\mH)$ if and only if it belongs to $\Phi_n^1(\mH)$, if and only if $\ker G(x)$ belongs to the Grassmannian $\mathrm{Gr}_{n+1}(\mH)$ of $(n+1)$-planes in $\mR \oplus \mH$ that are contained in $(0)\oplus \mH$.
\item The map $F$ is transverse to the submanifold $\Phi_n^1(\mH)$ at $x$ if and only if the map 
\begin{equation}
\label{kermap}
\ker G: \Sigma \rightarrow \mathrm{Gr}_{n+1}(\mR\oplus \mH)
\end{equation}
is transverse at $x$ to the submanifold $\mathrm{Gr}_{n+1}(\mH)$ of $\mathrm{Gr}_{n+1}(\mR\oplus \mH)$.
\end{enumerate}
\end{proposition}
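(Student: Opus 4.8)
The plan is to prove (i) by a short cokernel count and (ii) by a computation in a Grassmann chart that identifies the two ``normal derivatives'' up to sign.

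\emph{For (i),} I would argue as follows. Since $S_R$ has index $-1$, the operator $F(x)=G(x)S_R$ has index $n$. From the splitting $\mR\oplus\mH=(\mR\oplus 0)\oplus S_R(\mH)$ and the surjectivity of $G(x)$ one gets $\mH=G(x)(\mR\oplus 0)+\mathrm{im}\,F(x)$; as $G(x)(\mR\oplus 0)$ is at most one-dimensional, $\dim\coker F(x)\le 1$, which already yields $F(x)\in\Phi_n^{\mathrm{sing}}(\mH)\iff F(x)\in\Phi_n^1(\mH)$. Moreover $F(x)$ is surjective iff $G(x)(1,0)\in\mathrm{im}\,F(x)$, iff $(1,-v)\in\ker G(x)$ for some $v\in\mH$, iff $\ker G(x)\not\subset S_R(\mH)$; equivalently, $F(x)\in\Phi_n^1(\mH)\iff\ker G(x)\in\mathrm{Gr}_{n+1}(\mH)$. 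Here I would recall that $x\mapsto\ker G(x)$ is smooth (as $G$ is smooth and pointwise surjective) and that $\mathrm{Gr}_{n+1}(\mH)$ is a closed submanifold of $\mathrm{Gr}_{n+1}(\mR\oplus\mH)$ of codimension $n+1$.

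\emph{For (ii),} note first that by (i) both transversality conditions hold vacuously at any $x$ with $F(x)$ surjective, so I would fix $x_0$ with $F(x_0)\in\Phi_n^1(\mH)$, set $L:=\ker F(x_0)$ (so $\dim L=n+1$ and $K_0:=\ker G(x_0)=S_R(L)$), and identify both normal spaces with $\mathrm{L}(L,\coker F(x_0))$. On the operator side this is Proposition \ref{singular}: since $T_{F(x_0)}\Phi_n^1(\mH)=\{\hat F:(I-P)\hat F|_L=0\}$, the normal projection of $\hat F\in\mathrm{L}(\mH)$ is $(I-P)\hat F|_L\in\mathrm{L}(L,(\mathrm{im}\,F(x_0))^\perp)=\mathrm{L}(L,\coker F(x_0))$. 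On the Grassmann side I would split $\mR\oplus\mH=K_0\oplus K_0^\perp$ with $K_0^\perp=(\mR\oplus 0)\oplus S_R(L^\perp)$, $L^\perp$ the orthogonal complement of $L$ in $\mH$; in the associated chart an $(n+1)$-plane near $K_0$ is the graph of a map $\gamma\colon K_0\to K_0^\perp$, the planes contained in $S_R(\mH)$ are exactly those with $\gamma$ valued in $S_R(L^\perp)$, hence $T_{K_0}\mathrm{Gr}_{n+1}(\mH)=\mathrm{L}(K_0,S_R(L^\perp))$ and the normal projection of $\beta\in\mathrm{L}(K_0,K_0^\perp)$ is its $(\mR\oplus 0)$-component. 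Finally $G(x_0)$ maps $\mR\oplus 0$ isomorphically onto $\coker F(x_0)$ (its image there is not contained in $\mathrm{im}\,F(x_0)$), which I would use to identify $\mathrm{L}(K_0,\mR\oplus 0)$ with $\mathrm{L}(L,\coker F(x_0))$.

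The computation would then go as follows. Representing $\ker G$ near $x_0$ as the graph of $\gamma(x)\colon K_0\to K_0^\perp$ with $\gamma(x_0)=0$, we have $G(x)\bigl(k+\gamma(x)k\bigr)=0$ for all $k\in K_0$; differentiating at $x_0$ in a direction $\xi\in T_{x_0}\Sigma$ and using $\gamma(x_0)=0$ gives
\[
G(x_0)\bigl(d\gamma(x_0)[\xi]k\bigr)=-\,dG(x_0)[\xi]k,\qquad k\in K_0.
\]
Taking $k=S_R\ell$ with $\ell\in L$, the right-hand side is $-dF(x_0)[\xi]\ell$; writing $d\gamma(x_0)[\xi]k=a(1,0)+S_R w'$ with $a\in\mR$, $w'\in L^\perp$, the left-hand side is $a\,G(x_0)(1,0)+F(x_0)w'$. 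Applying $I-P$ kills $F(x_0)w'$ and leaves $a\,(I-P)G(x_0)(1,0)=-(I-P)dF(x_0)[\xi]\ell$. Since $(I-P)G(x_0)(1,0)$ is precisely the generator of $\coker F(x_0)$ used above, this identity says that, under the two identifications, the normal projection of $d(\ker G)(x_0)[\xi]$ equals minus the normal projection of $dF(x_0)[\xi]$; hence $dF(x_0)$ followed by the normal projection is onto iff $d(\ker G)(x_0)$ followed by the normal projection is onto, which is (ii). The main obstacle I anticipate is entirely in (ii): pinning down the two canonical identifications of the normal spaces with $\mathrm{L}(L,\coker F(x_0))$ and checking that the first-order maps agree up to sign; the only delicate point is the bookkeeping with the splitting $K_0^\perp=(\mR\oplus 0)\oplus S_R(L^\perp)$ and the way $G(x_0)$ intertwines $\mR\oplus 0$ with $\coker F(x_0)$, after which everything reduces to a projection. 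Part (i) is immediate.
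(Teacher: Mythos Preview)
Your proof is correct. Part (i) is essentially identical to the paper's argument. For part (ii), however, you take a genuinely different route: you work in an explicit Grassmann chart at $K_0=\ker G(x_0)$, differentiate the graph equation $G(x)(k+\gamma(x)k)=0$, and match the two normal projections up to sign via the isomorphism $(I-P)G(x_0)|_{\mR\oplus 0}:\mR\oplus 0\to(\mathrm{im}\,F(x_0))^\perp$. The paper instead proves (ii) without any coordinate computation by invoking a general ``transitivity of transversality through submersions'' lemma twice: first, the restriction $\mathscr{S}=S_R^*:\Phi_{n+1}^0(\mR\oplus\mH,\mH)\to\Phi_n(\mH)$ is a submersion with $\mathscr{S}^{-1}(\Phi_n^1(\mH))$ equal to the set $\mathscr{B}$ of surjective operators whose kernel lies in $(0)\oplus\mH$, so $F=\mathscr{S}\circ G$ is transverse to $\Phi_n^1(\mH)$ iff $G$ is transverse to $\mathscr{B}$; second, $\ker:\Phi_{n+1}^0(\mR\oplus\mH,\mH)\to\mathrm{Gr}_{n+1}(\mR\oplus\mH)$ is a submersion with $\ker^{-1}(\mathrm{Gr}_{n+1}(\mH))=\mathscr{B}$, so $G$ is transverse to $\mathscr{B}$ iff $\ker G$ is transverse to $\mathrm{Gr}_{n+1}(\mH)$. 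Your approach has the merit of being self-contained and of exhibiting explicitly how the two normal derivatives are related; the paper's approach is shorter and more conceptual, isolating the linear-algebra content into a reusable lemma and avoiding the bookkeeping with the splitting $K_0^\perp=(\mR\oplus 0)\oplus S_R(L^\perp)$ that you yourself flagged as the delicate point.
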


Note that the map (\ref{kermap}) is smooth because $G$ takes values in the space of surjective operators. In the proof of the above we shall make use of the following standard result in transversality theory.

\begin{lemma}
\label{transitivityoftransversality} Let $p: M \rightarrow N$ be a smooth submersion between Banach manifolds. Let $N_0$ be a finite-codimensional submanifold of $N$ and $M_0:= p^{-1}(N_0)$ the corresponding submanifold of $M$. Let $f: \Sigma \rightarrow M$ be a smooth map from a finite dimensional manifold $\Sigma$ to $M$ and set $g:= p\circ f: \Sigma \rightarrow N$. Then $f$ is transverse to $M_0$ at some $x\in \Sigma$ if and only if $g$ is transverse to $N_0$ at $x$.
\end{lemma}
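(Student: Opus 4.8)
The plan is to reduce the statement to a one-line piece of linear algebra at the level of tangent spaces; the only structural input needed is the description of the tangent space of the preimage $M_0=p^{-1}(N_0)$.

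First I would dispose of the trivial case. Since $M_0=p^{-1}(N_0)$, we have $f(x)\in M_0$ if and only if $g(x)=p(f(x))\in N_0$, and when this fails both $f$ and $g$ are vacuously transverse (to $M_0$, resp.\ to $N_0$) at $x$, so there is nothing to prove. Hence I may assume $f(x)\in M_0$; write $m:=f(x)$ and $P:=dp_m\colon T_mM\to T_{g(x)}N$, which is surjective because $p$ is a submersion. Since a submersion is transverse to every submanifold, $M_0=p^{-1}(N_0)$ is a submanifold with
\[
T_mM_0=P^{-1}\bigl(T_{g(x)}N_0\bigr),
\]
the standard formula for the tangent space of a transverse preimage. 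Because $N_0$ is finite-codimensional, $T_{g(x)}N_0$ is closed with finite-dimensional complement, and hence so is $T_mM_0$ in $T_mM$; thus all subspaces appearing below are complemented and no Banach-space pathologies occur.

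Setting $V:=df_x(T_x\Sigma)$ and $W:=T_{g(x)}N_0$, and using $dg_x=P\circ df_x$, transversality of $f$ to $M_0$ at $x$ reads $V+P^{-1}(W)=T_mM$, while transversality of $g$ to $N_0$ at $x$ reads $P(V)+W=T_{g(x)}N$. It then remains to prove the purely linear fact that, for a surjective linear map $P\colon E\to E'$ and subspaces $V\subseteq E$, $W\subseteq E'$, one has $V+P^{-1}(W)=E$ if and only if $P(V)+W=E'$. For the forward implication I would apply $P$ to the first identity and use that $P(P^{-1}(W))=W$ by surjectivity of $P$; for the converse, given $v\in E$ I would pick $v_0\in V$ with $P(v-v_0)\in W$, i.e.\ $v-v_0\in P^{-1}(W)$, so that $v\in V+P^{-1}(W)$. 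This closes the argument. I do not anticipate any genuine obstacle: the proof is a diagram chase, and the only point deserving a word of care is confirming that surjectivity of $P$ together with the finite codimension of $W$ legitimizes these manipulations in the Banach category, which it does by the remark above. (Alternatively one could argue via the local normal form of a submersion, writing $p$ near $m$ as a projection $U\times V\to V$ and $M_0$ correspondingly as $U\times(N_0\cap V)$, but the tangent-space computation is shorter.)
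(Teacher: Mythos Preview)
Your proposal is correct and follows essentially the same route as the paper: both reduce the statement at the level of tangent spaces to the linear fact that for a surjective (split) linear map $P\colon X\to Y$, a subspace $V\subset X$, and a finite-codimensional subspace $W\subset Y$, one has $V+P^{-1}(W)=X$ if and only if $P(V)+W=Y$. The paper merely states this linear version and calls its proof straightforward, while you spell out the two-line verification; the content is the same.
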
 

The above lemma is an immediate consequence of the corresponding statement in the linear category, whose proof is straightforward: Let $X$ and $Y$ be Banach spaces, let $P: X\rightarrow Y$ be a continuous linear operator which is a left inverse, let $Y_0$ be a finite-codimensional closed linear subspace of $Y$ and set $X_0:= P^{-1} Y_0$. Then a linear map $F: \mR^n \rightarrow X$ satisfies $\mathrm{im}\, F + X_0 = X$ if and only if the composition $PF: \mR^n \rightarrow Y$ satisfies $\mathrm{im}\, PF + Y_0 = Y$.

\begin{proof}[Proof of Proposition \ref{apptrans}.]
(i) The fact that $G(x)$ is surjective implies that the cokernel of $F(x) = S_R^* G(x)$ is at most one-dimensional. Therefore, $F(x)$ belongs to $\Phi_n^{\mathrm{sing}}(\mH)$ if and only if it belongs to $\Phi_n^1(\mH)$. Since $F(x)$ is Fredholm of index $n$, this happens if and only if the kernel of $F(x)$ has dimension $n+1$. But this is equivalent to the fact that the kernel of $G(x)$, which has always dimension $n+1$, is $(0) \oplus \ker F(x)$, i.e.\ is contained in $(0) \oplus \mH$.

(ii) The set
\[
\Phi_{n+1}^0(\mR\oplus \mH,\mH) := \{ B\in \Phi_{n+1}(\mR \oplus \mH,\mH) \mid B \mbox{ is surjective} \}
\]
is open in $\mathrm{L}(\mR \oplus \mH,\mH)$, and the map $G$ takes values into it:
\[
G: \Sigma \rightarrow \Phi_{n+1}^0(\mR\oplus \mH,\mH).
\]
The map
\begin{equation}
\label{kersub}
\ker : \Phi_{n+1}^0(\mR\oplus \mH,\mH) \rightarrow \mathrm{Gr}_{n+1}(\mR \oplus \mH), \qquad B \mapsto \ker B,
\end{equation}
is a smooth submersion, and we set
\[
\mathscr{B} := \ker^{-1} ( \mathrm{Gr}_{n+1}(\mH)) = \{ B\in \Phi_{n+1}^0(\mR\oplus \mH,\mH) \mid \ker B \subset (0) \oplus \mH\}.
\]
Note that the continuous linear mapping
\[
S_R^*: \mathrm{L}(\mR\oplus \mH,\mH) \rightarrow \mathrm{L}(\mH), \qquad B \mapsto B S_R,
\]
has the right inverse
\[
S_L^* :  \mathrm{L}(\mH) \rightarrow \mathrm{L}(\mR \oplus \mH,\mH), \qquad A \mapsto A S_L,
\]
where $S_L$ is the linear operator
\[
S_L: \mR\oplus \mH \rightarrow \mH, \qquad (s,v) \mapsto v.
\]
It follows that the restriction of $S_R^*$ to $\Phi_{n+1}^0(\mR\oplus \mH,\mH)$ onto $\Phi_n(\mH)$, which we denote by
\[
\mathscr{S} : \Phi_{n+1}^0(\mR\oplus \mH,\mH) \rightarrow \Phi_n(\mH), \qquad B \mapsto S_R^* B,
\]
is a smooth submersion. Moreover
\[
\mathscr{S}^{-1}(\Phi_n^1(\mH)) = \mathscr{B},
\] 
by the same argument used in the proof of (i). By Lemma \ref{transitivityoftransversality}, the map $F= \mathscr{S} \circ G$ is transverse to $\Phi_n^1(\mH)$ if and only if the map $G$ is transverse to $\mathscr{B}$. By a second application of Lemma \ref{transitivityoftransversality}, this time with the submersion in Equation (\ref{kersub}), the latter condition is equivalent to the fact that the map $\ker G$ is transverse to $\mathrm{Gr}_{n+1}(\mH)$.
\end{proof}


\end{document}